\newtheorem{lemma}{Lemma}[section]
\newtheorem{theorem}{Theorem}[section]
\newtheorem{remark}{Remark}[section]
\newcommand{\R}{\ensuremath{\mathbb{R}}}
\begin{document}


\title{Optimal control for a SIR model with limited hospitalised patients}

\author{Roc\'io Balderrama}
\affiliation{IMAS (UBA-CONICET), Intendente G{\"u}iraldes 2160 - Ciudad Universitaria, 
            C1428EGA, Buenos Aires, CABA, Agentina}
\affiliation{Departamento de Matem{\'a}tica, FCEyN-UBA, Intendente G{\"u}iraldes 2160 - Ciudad Universitaria - Pabell{\'o}n~I, C1428EGA, Buenos Aires, CABA, Agentina}
       
\author{Mariana In\'es Prieto}
\affiliation{Departamento de Matem{\'a}tica, UNS and INMABB (UNS-CONICET), Av. Alem 1253, Bah{\'i}a Blanca, 8000, Prov. de Buenos Aires, Argentina}
    
\author{Constanza S\'anchez de la Vega}
\affiliation{Departamento de Matem{\'a}tica, FCEyN-UBA, Intendente G{\"u}iraldes 2160 - Ciudad Universitaria - Pabell{\'o}n~I, C1428EGA, Buenos Aires, CABA, Agentina}
\affiliation{Instituto de C\'alculo (UBA-CONICET), Intendente G{\"u}iraldes 2160 - Ciudad Universitaria, C1428EGA, Buenos Aires, CABA, Agentina}            

\author{Federico Vazquez}            
\affiliation{Instituto de C\'alculo (UBA-CONICET), Intendente G{\"u}iraldes 2160 - Ciudad Universitaria, C1428EGA, Buenos Aires, CABA, Agentina}

\begin{abstract}
This paper analyses the optimal control of infectious disease propagation using a classic susceptible-infected-recovered (SIR) model characterised by permanent immunity and the absence of available vaccines. The control is performed over a time-dependent mean reproduction number, in order to minimise the cumulative number of ever-infected individuals (recovered), under different constraints. We consider constraints on isolation measures ranging from partial lockdown to non-intervention, as well as the social and economic costs associated with such isolation, and the capacity limitations of intensive care units that limits the number of infected individuals to a maximum allowed value. We rigorously derive an optimal quarantine strategy based on necessary optimality conditions. The obtained optimal strategy is of a boundary-bang type, comprising three phases: an initial phase with no intervention, a second phase maintaining the infected population at its maximum possible value, and a final phase of partial lockdown applied over a single interval. The optimal policy is further refined by optimising the transition times between these phases. We show that these results are in excellent agreement with the numerical solution of the problem.
\end{abstract}

\maketitle





\section{Introduction}

The outbreak of infectious diseases has always been a challenge to public health systems and society at large. Among the mathematical models used to understand and manage the spread of such diseases, the Susceptible-Infectious-Recovered (SIR) model introduced by
Kermack  and McKendrick \cite{kermack1927contribution} has emerged as a fundamental framework. The SIR model provides valuable insights into the dynamics of epidemics, allowing us to quantify the impact of various control measures on disease transmission.

For many disease outbreaks when a vaccine is still not available, such as for Covid-19, different countries around the world had to take a series of unprecedented decisions as imposing  non-pharmaceutical interventions like social distance, quarantines and total lock-downs as the most effective tools to mitigate the spread of the disease. 
Although these kinds of measures are helpful in reducing the virus transmission and giving time to health systems to adapt, they could be extremely stressful in terms of economic and social costs, and, in longer periods, tend to have less compliance with the population.

In the context of managing infectious disease outbreaks, one critical concern is the pressure placed on healthcare systems, particularly the capacity of hospitals to provide care to those in need. During epidemics, the demand for hospitalisation can often exceed available resources, leading to a higher mortality rate and a significant burden on healthcare workers.

We consider the classical SIR model widely used in epidemiology 
\cite{anderson1992infectious,brauer2012mathematical}.  As it is usual in these models, we assume that people who have recovered develop immunity
and, therefore, would not be able to get infected nor infect others. 

To account for quarantine measures, we consider a time-dependent reproduction number and we assume that the intervention will occur in the preset period of time $[0,T]$, as proposed by Greenhalgh \cite{greenhalgh1988some} and more recently by Ketcheson \cite{ketcheson2020optimal}, Bliman et al \cite{Bliman2021} and Balderrama et al \cite{Balderrama22}, since it is unrealistic that interventions
can be sustained indefinitely.  

As mentioned in \cite{Bliman2021} (see references therein) there are other methods to control disease outbreaks in the framework of the SIR: pharmaceutical interventions as vaccination or treatment; or the screening and quarantining of infected. Also different modelling frameworks have been considered such as SEIR (with the addition of the Exposed compartment),  SIRQ (with the addition of the Quarantine compartment) or SIRH (with the addition of the Hospitalised compartment) among others.

In this article we address the development and analysis of an optimal control framework that seeks, by forcing social distancing, to strike a balance between minimising the {\bf final size of epidemic} while keeping both the maximum {\bf  economic cost} that the policy maker can afford and ongoing available {\bf hospital resources}. 
By incorporating constraints on the economic costs and the number of patients that can be admitted to hospitals, we aim to provide a realistic and actionable approach to epidemic management. 
The first constraint consists of an inequality for  $L^1-$norm on the control [see \eqref{eq: int_cond}], while the second constraint is a peak prevalence type of the infected for all time (maximum number allowed of infected), a running state constraint that considers the hospitalised patients as a fraction of the infected individuals.

This research contributes to the ongoing effort to enhance our understanding of disease dynamics and offers practical strategies for policymakers and healthcare professionals to effectively combat infectious disease outbreaks while operating within real-world economic and healthcare constraints.

Recently, many works have appeared dealing with these and related issues.
The problem of maximising the final number of susceptible individuals was studied by Ketcheson \cite{ketcheson2020optimal} in a prescribed intervention interval of time $[0,T]$. However, in that work it is assumed that the
total lockdown corresponds to a zero reproduction number, something that is impossible to achieve in the real world.
Moreover, it is assumed that the total lockdown can last during the whole intervention,
which seems to be impracticable. 
Later, in \cite{Bliman2021} the authors considered the same optimal control problem with a partial lockdown.
In these two articles it is proved that the optimal control is of bang-bang type consisting of a first phase of no action where no intervention measures are taken and a second phase where the maximum possible lockdown is applied until the end of the intervention time $T$. 
 In \cite{Balderrama22} the same problem is studied, broadening the previous results by adding an $L^1$ restriction on the control accounting for a restriction on the economic cost. As a consequence, in this case the optimal control can have three phases: a first phase of no action and a  second phase of partial lockdown during the maximum possible time that the $L^1$ restriction allows.
In the present article, we extend the results obtained in \cite{Balderrama22} by adding the peak prevalence constraint for all time.
 
Dealing with running state constraints in optimal control theory adds a layer of complexity to our approach. These constraints reflect the dynamic and evolving nature of healthcare capacity, making it essential to adapt control strategies in real-time to ensure the efficient allocation of resources while minimising the disease's spread. 

Several authors have analysed the problem of controlling an epidemic reducing transmission through lockdown policies, while keeping the infected below a certain threshold to prevent healthcare system saturation. Among them, Nenchev  \cite{Nenchev2020}  considered an SIRQ-type model with a Mayer-type cost function. On the other hand, Avram  \cite{Avram2022}, Sereno  \cite{Sereno2022}, and Miclo  \cite{Miclo2020} studied this issue using an SIR-type model and an $L^1$-type cost function for the control. Lastly, Morris \cite{Morris2021} analysed an SIR-type model with a peak prevalence-type cost function and  in \cite{Balderrama23} we studied the optimal final size with no $L^1$ restriction.
In all these articles the optimal control obtained is of boundary-bang type.

In this article we employ a cost function different from those used in the previously mentioned works. Specifically, we use a final size of epidemic cost function with the aforementioned restrictions.  Our goal is to characterise the optimal control for the problem of maximising the final size of the susceptibles with an $L^1$-type restriction on the control and a running state constraint for the infected. 
Using an optimal control approach 
we show that the optimal strategy is a single {\it bang-bang} or {\it boundary-bang} type with a boundary control that keeps the infected in its maximum possible value followed by a partial lockdown applied in a single interval of time.  Moreover, we characterise the time to start and finish each of these phases of the intervention.


In the following sections we shall elucidate the key parameters governing the behaviour of the considered SIR model, and present the optimal control strategies that can help mitigate the impact of epidemics under limited hospital capacity with real-time patient admission constraints and limited economic cost. Through rigorous analysis and simulation, we will show the potential of our approach to inform decision-making processes during public health crises.

The paper is organised as follows.  We start by describing the optimal control model and preliminaries results in section \ref{se: problem statement}.  In section \ref{se: pontryagin} we use Pontryagin's Maximum Principle  (PMP) and show that only bang-bang or boundary-bang controls are admitted in a particular sequence to solve the corresponding optimal control problem. To obtain the specific optimal policy, the switching times between the different phases are determined in an induced optimisation problem in section \ref{se: caracterization_optimal}. In section \ref{se: numerical} we perform a numerical test of the results by finding approximate numerical solutions of the control problem and comparing them with the rigorous solution summarised in Theorem~\ref{te: control_optimo_sigma1gral_rest_v2}.  Finally, in section \ref{se: summary} we give a summary of the main findings and discuss future work.


\section{Problem statement and preliminaries} 
\label{se: problem statement}

We address the challenge of managing the spread of an epidemic in the absence of vaccination and with limited hospital resources. The primary means of control available is isolation, represented by a time-dependent reproduction number, $\sigma(t)$, that can be adjusted within the range $[\sigma_s,\sigma_f]$. Here, $\sigma_s$ signifies stricter isolation (referred to as "strict" quarantine), while $\sigma_f$ indicates a state without restrictions ("free" from restrictions). The intervention is applied over a finite period $[0,T]$, where T represents the duration of the intervention. Following this period, restrictions are lifted, allowing the disease to spread freely with a constant $\sigma(t)=\sigma_{f}$ for all $t>T$. The parameter $\sigma(t)$ is conceptualised as encapsulating political measures like social distancing, business and institution lockdowns. Then, the system evolves according to the following set of coupled nonlinear ordinary differential equations:

\begin{subequations} \label{eq: SIR}
\begin{align}
\label{eq: SIR_x}
x'(t) &= -\gamma \, \sigma(t) \, x(t) \, y(t), \\
 \label{eq: SIR_iy}
 y'(t) &= \gamma \, \sigma(t) \, x(t) \, y(t) - \gamma \, y(t),
\end{align}
\end{subequations}
with $(x(0),y(0))\in {\cal{D}}=\left\{ (x_0,y_0): x_0>0, y_0>0, x_0+y_0\le 1\right\}$, $\sigma(t) \in [\sigma_s,\sigma_f]
$ for $t\in [0,T]$ and $\sigma(t)=\sigma_f$ for $t>T$, where $0\le \sigma_s<\sigma_f$.  Note that the region ${\cal {D}}$ is forward-invariant and there exists a unique solution for all time \cite{Hethcote2000}. 

We also assume that during the intervention period $[0,T]$, an extremely restrictive isolation for an extended duration is deemed impractical, leading to the restriction:
\begin{align} \label{eq: int_cond}
\int_0^T  \sigma(t) dt \ge \sigma_s \tau + \sigma_f(T-\tau),
\end{align}
with $\tau<T$ given, implying that for an optimal solution, the strict quarantine lasts at most $\tau$ time (see inequality \eqref{eq: cota_mu_F}).

To prevent an overload of treatment resources and considering that a fraction of infected individuals will need to be hospitalised, the constraint \(y(t) \leq K\) is imposed for \(t \in [0, T]\), where \(K > 0\) is a constant (running state constraint).

After the intervention at time \(T\), we calculate \(x_{\infty}(x(T), y(T), \sigma_f) = \lim_{t \to \infty} x(t)\), where \((x(t), y(t))\) is the solution of the system with initial condition \((x(T), y(T))\) and constant reproduction number \(\sigma(t) \equiv \sigma_f\) for \(t > T\).

Our objective is to determine the optimal control for the SIR model described above to minimise the overall impact of the pandemic in terms of the total number of infections. We aim to find an optimal \(\sigma(t)\)  in the family of piecewise continuous functions, that minimises the total damage, considering the long-term behaviour of the system.

As it is known,  for $y(0)>0$, $\gamma \ge 0$ and $\sigma \ge 0$ the fraction of infected $y(t)>0$ for any time $t \ge 
0$ and approaches zero asymptotically, i.e., $y_{\infty} \equiv \lim_{t \to \infty} 
y(t)= 0$. The fraction of susceptible individuals $x(t)$ is strictly decreasing, and $x_{\infty} \equiv \lim_{t \to \infty} x(t)\in  (0,1/\sigma)$ (see Theorem 2.1 in \cite{Hethcote2000}).  Therefore, the state of the system in the long time limit consists only 
of susceptible and recovered individuals, $x_{\infty} + z_{\infty} =1$, where $z_{\infty} \equiv \lim_{t\to \infty} z(t)$. 

In order to find the optimal $\sigma(t)$ it proves convenient to work with the 
fraction of susceptible individuals in the long term $x_{\infty}$ instead.  Then, given that minimising $z_{\infty}$ is 
equivalent to maximising $x_{\infty}$, we define the functional
\begin{align}
    \label{eq: Jcosto}
J(x,y,\sigma):=x_{\infty}(x(T),y(T),\sigma_f) 
\end{align}

Additionally, a new state variable \(v(t) = \int_{0}^{t} \sigma(s) \, ds\) is introduced, enabling the study of the optimal control model
\begin{subequations} \label{eq: SIR_iso}
\begin{align}
\label{eq: funcional_J_iso}
\max \quad& J(x,y,v,\sigma):=x_{\infty}(x(T),y(T),\sigma_f)  \\
\label{eq: SIR_iso_x}
s.t. \quad &x'(t)= -\gamma \sigma(t) x(t) y(t),   \quad x(0) = x_0, \quad t\in [0,T],\\
 \label{eq: SIR_iso_y}
 &y'(t) = \gamma \sigma(t) x(t) y(t) - \gamma y(t),  \quad y(0)= y_0, \quad t\in [0,T],\\
 \label{eq: SIR_iso_v}
 &v'(t)  =\sigma(t),  \quad v(0)= 0,  \quad t\in [0,T], \\
 \label{eq: sigma_S_T}
 &\sigma(t) \in [\sigma_s,\sigma_f], \quad \text{a.e. } t\in [0,T] \\
     \label{eq: rest_int}
  & v(T) \ge \sigma_s \tau + \sigma_f (T-\tau) \\
    \label{eq: restr_yt}
  &y(t)\le K , \quad t\in [0,T],
\end{align}
\end{subequations}

Next, we give some definitions for optimal control problems with running state constraints as outlined in  \cite{Maurer1977}.

An extremal arc of \eqref{eq: funcional_J_iso}-\eqref{eq: rest_int}  is called an  {\bf unconstrained extremal} whereas an extremal arc of  \eqref{eq: funcional_J_iso}-\eqref{eq: restr_yt} is called a {\bf constrained extremal}. For given $K$ the state constraint \eqref{eq: restr_yt} is called {\bf active} if the optimal unconstrained trajectory violates  \eqref{eq: restr_yt}.

By definition, the {\bf order $p$} of an inequality pure state constraint is the first derivative of the restriction, $h(x(t),y(t),v(t))=y(t)-K \le 0$, containing the control $\sigma$ explicitly.  
Since the control variable $\sigma$ appears in the first total time derivative of $h(x(t),y(t),v(t))$\begin{align} \label{eq: orden_rest}
\frac{dh(x(t),y(t),v(t))}{dt}&=y'(t)=\gamma y(t) (\sigma(t) x(t)-1),
\end{align}
the state constraint is of order $p=1$.

A subarc of the trajectory $(x(t),y(t),v(t))$ for which $h(x(t),y(t),v(t))<K$ is called an {\bf interior arc}; a {\bf boundary arc}  is a subarc of the trajectory where $h(x(t),y(t),v(t))=K$ for $a\le t\le b$ with $0<a<b<T$. Here $a$ and $b$ are called the {\bf entry-}  and {\bf exit-time} of the boundary arc; $a$ and $b$ are also termed {\bf junction points}. An arc is said to have a {\bf contact point} with the boundary at $c\in (0,T)$ if $h(x(c),y(c),v(c))=K$ and $h(x(t),y(t),v(t)) <K$ for all $t\neq c$ in a neighbourhood of $c$.

From \eqref{eq: orden_rest} we have that on boundary arcs (where $y(t)=K$ for $t\in [a,b]\subset [0,T]$), the boundary control is determined by 
\begin{align}
 \sigma_b(t)=\frac{1}{x(t)} \label{eq: sigmab}
\end{align}
for $t\in (a,b)$ (see Maurer \cite{Maurer1977}). From equation \eqref{eq: SIR_iso_x}  $x'(t)= - \gamma K$  and therefore
\begin{align} \label{eq: sigmabhiperb}\sigma_b(t)= \frac{1}{x(t_1)-\gamma K (t-t_1)} \text{  for   } t\in (a,b).\end{align}

In what follows we set forth Theorem 2 proved in \cite{Balderrama22} with $\kappa=0$ 
for the unconstrained optimal control problem given by equations \eqref{eq: funcional_J_iso}-\eqref{eq: rest_int}. 
In that article, it is proven that an optimal control is bang-bang and has the form
\begin{equation} \label{eq: sigma_viejo}
		\sigma[t,\mu](r):= \left\{ \begin{array}{ll}
			\sigma_f& \text{for }0\le r \le t,\\
			\sigma_s & \text{ for } t < r \le t+\mu,\\
			\sigma_f & \text{ for } t+\mu < r \le T,
		\end{array}\right.
	\end{equation}
with $t\ge 0, \mu \ge 0$ and $t+\mu \le T$. In addition, it is defined $(x[t,\mu],y[t,\mu])$ as the associated state variables, solution of the system \eqref{eq: SIR_iso_x}-\eqref{eq: SIR_iso_y} with $\sigma=\sigma[t,\mu]$, and the functions 
\begin{equation} \label{eq: omega}
w(t)= \left\{ \begin{array}{ll}  \int_{t}^{t+\tau } \frac{\sigma_f x[t,\tau](r)-1}{y[t,\tau](r)} dr  & \text{for }0\le t \le T-\tau \\
& \\
 \int_{t}^{T} \frac{\sigma_f x[t,T-t](r)-1}{y[t,T-t](r)}dr & \text{ for } T-\tau < t \le T,
\end{array}\right.
\end{equation} 
and 
\begin{align}\label{eq: alpha}
\alpha(t)&=
\frac{1}{\gamma y[t,T-t](t)}
 \end{align}
 for $t\in[T-\tau,T]$.

\begin{theorem}[Theorem 2 in \cite{Balderrama22} with $\kappa = 0$] \label{te: control_optimo_sigma1gral}
	Let $0\le \sigma_s<\sigma_f$ with $\sigma_s<1$ and $w$ be given by Eq.~\eqref{eq: omega}. Then the optimal control is unique and is given by
	\begin{equation} \label{eq: sigmacero_con_rest_int}
		\sigma^0(r):= \left\{ \begin{array}{ll}
			\sigma_f& \text{for }0\le r \le t^0,\\
			\sigma_s & \text{ for } t^0 < r \le t^0+\mu^0,\\
			\sigma_f & \text{ for } t^0+\mu^0 < r \le T,
		\end{array}\right.
	\end{equation}
	where
	\begin{itemize}
		\item[1.1.] For $w(0)\le 0$: $t^0=0$ and $\mu^0=\tau$.
		\item[1.2.] For  $w(0)>0$ and   $w(T-\tau)\le 0$: {$t^0=\hat{t}$} and $\mu^0=\tau$ where {$\hat{t}$} is the unique value on $[0,T-\tau]$ such that {$w(\hat{t})=0$}.
		\item[1.3.] For $0<w(T-\tau)\le\dfrac{1}{\gamma y[T-\tau,\tau](T-\tau)}$:  $t^0=T-\tau$ and $\mu^0=\tau$.
		\item[1.4.] For $w(T-\tau)>\dfrac{1}{\gamma y[T-\tau,\tau](T-\tau)}$: $t^0=\tilde{t}$ where $\tilde{t}$ is the unique value on $[T-\tau,T]$ such that $w(\tilde{t})=\dfrac{1}{\gamma y[\tilde{t},T-\tilde{t}](\tilde{t})}$ and $\mu^0=T-\tilde{t}$.
	\end{itemize}
\end{theorem}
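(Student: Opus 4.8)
The stated result is Theorem~2 of \cite{Balderrama22} with $\kappa=0$, so one may simply invoke that reference; for completeness I indicate the route I would take to prove it directly, since the same skeleton will be reused for the constrained problem. The plan is to reduce the infinite-dimensional problem to a two-parameter optimisation via Pontryagin's Maximum Principle (PMP) and then to solve that optimisation by one-variable calculus. Existence of an optimal piecewise-continuous control is standard: $[\sigma_s,\sigma_f]$ is compact, the right-hand side of \eqref{eq: SIR_iso_x}-\eqref{eq: SIR_iso_v} is affine in $\sigma$, the reachable set is convex, and $(x(T),y(T))\mapsto x_\infty(x(T),y(T),\sigma_f)$ is continuous. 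For the free post-intervention flow ($\sigma\equiv\sigma_f$), dividing \eqref{eq: SIR_iy} by \eqref{eq: SIR_x} gives the first integral $x+y-\frac{1}{\sigma_f}\ln x=\mathrm{const}$, so $x_\infty(\bar x,\bar y,\sigma_f)$ is the unique root in $(0,1/\sigma_f)$ of
\begin{equation}\label{eq: first_integral_xinf}
x_\infty-\frac{1}{\sigma_f}\ln x_\infty=\bar x+\bar y-\frac{1}{\sigma_f}\ln\bar x,
\end{equation}
and implicit differentiation of \eqref{eq: first_integral_xinf} yields explicit formulas for $\partial_{\bar x}x_\infty$ and $\partial_{\bar y}x_\infty$ (the latter equal to $1/(1-1/(\sigma_f x_\infty))<0$), which furnish the transversality data.

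Next I would apply PMP to \eqref{eq: funcional_J_iso}-\eqref{eq: rest_int}. With Hamiltonian $H=-\gamma\sigma x y\,p_x+(\gamma\sigma x y-\gamma y)\,p_y+\sigma p_v$ and adjoint variables $(p_x,p_y,p_v)$ solving the usual adjoint equations, the dependence of $H$ on $\sigma$ is affine with switching function $\psi=\gamma x y\,(p_y-p_x)+p_v$, and the transversality conditions read $p_x(T)=\partial_{\bar x}x_\infty$, $p_y(T)=\partial_{\bar y}x_\infty$, and $p_v\equiv p_v(T)\ge 0$, with $p_v(T)=0$ exactly when the $L^1$ constraint \eqref{eq: rest_int} is inactive. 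Differentiating $\psi$ along the flow shows that $\psi\equiv 0$ on a subinterval would force the whole adjoint vector to vanish there, contradicting the non-triviality of the PMP multipliers; hence there are no singular arcs and the optimal $\sigma$ is bang-bang between $\sigma_s$ and $\sigma_f$. The crucial structural point, which I expect to be the main obstacle, is to show that the strict quarantine occupies a \emph{single} interval, in the order $(\sigma_f,\sigma_s,\sigma_f)$ of \eqref{eq: sigma_viejo}. I would establish this either by bounding the number of sign changes of $\psi$ through the adjoint system and its terminal conditions (using the hypothesis $\sigma_s<1$, which makes $y$ strictly decreasing on every quarantine subarc), or by an exchange argument: for a fixed total quarantine length, concentrating it in one interval dominates any splitting, because $x_\infty$ responds monotonically to a localised reduction of $\sigma$. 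Either way, the search collapses to the two-parameter family $\sigma[t,\mu]$ of \eqref{eq: sigma_viejo}.

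It then remains to maximise over $(t,\mu)$. For $\sigma=\sigma[t,\mu]$ one computes $v(T)=\sigma_f T-(\sigma_f-\sigma_s)\mu$, so \eqref{eq: rest_int} is precisely $\mu\le\tau$; since $J$ is strictly increasing in $\mu$ (a longer strict-quarantine block can only reduce the final epidemic size), the optimum satisfies $\mu^0=\min\{\tau,\,T-t^0\}$, and it suffices to maximise $g(t):=J(x[t,\mu(t)],y[t,\mu(t)],v)$ over $t\in[0,T]$ with $\mu(t)=\min\{\tau,\,T-t\}$. Differentiating \eqref{eq: first_integral_xinf} in $t$ and propagating the perturbation of the quarantine window through the $\sigma_f$- and $\sigma_s$-arcs (using \eqref{eq: SIR_iso_x}-\eqref{eq: SIR_iso_y} and once more the first integral), one obtains after simplification $g'(t)=c(t)\,w(t)$ on $[0,T-\tau]$ and $g'(t)=c(t)\bigl(w(t)-\alpha(t)\bigr)$ on $[T-\tau,T]$, with a continuous factor $c(t)>0$ and $w,\alpha$ as in \eqref{eq: omega}-\eqref{eq: alpha}; the extra term $-c(t)\alpha(t)$ beyond $T-\tau$ reflects the loss from shrinking the window below width $\tau$. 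One then checks that $w$ is continuous, strictly decreasing on $[0,T-\tau]$, and that $w-\alpha$ is strictly decreasing on $[T-\tau,T]$, so $w(\hat t)=0$ and $w(\tilde t)=\alpha(\tilde t)$ each have at most one solution.

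The four cases now follow by inspecting the sign of $g'$. If $w(0)\le 0$, then $w\le 0$ on $[0,T-\tau]$ and $w-\alpha<0$ on $[T-\tau,T]$, so $g$ is nonincreasing and its maximum is at $t^0=0$, $\mu^0=\tau$ (case 1.1). If $w(0)>0\ge w(T-\tau)$, then $w$ has a unique zero $\hat t\in(0,T-\tau]$, and $g$ increases on $[0,\hat t]$ and decreases on $[\hat t,T]$, giving $t^0=\hat t$, $\mu^0=\tau$ (case 1.2). If $0<w(T-\tau)\le\alpha(T-\tau)$, then $w>0$ on $[0,T-\tau]$ while $w-\alpha\le 0$ on $[T-\tau,T]$, so $g$ increases up to $T-\tau$ and decreases afterwards, giving $t^0=T-\tau$, $\mu^0=\tau$ (case 1.3). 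If $w(T-\tau)>\alpha(T-\tau)$, then $g$ still increases past $T-\tau$ up to the unique $\tilde t\in(T-\tau,T)$ with $w(\tilde t)=\alpha(\tilde t)$ and decreases afterwards, giving $t^0=\tilde t$, $\mu^0=T-\tilde t$ (case 1.4). Uniqueness of the optimal control follows from the strict monotonicity of $w$ and of $w-\alpha$, which makes the maximiser of $g$ unique, together with uniqueness of the state trajectory for a given control. The only genuinely delicate step in this programme is the single-interval structure of the quarantine discussed above; everything else is standard PMP bookkeeping or elementary calculus resting on \eqref{eq: first_integral_xinf}.
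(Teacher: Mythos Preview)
The paper does not give its own proof of this statement; it merely quotes the result from \cite{Balderrama22}, exactly as you propose in your opening sentence. Your optional sketch is sound and in fact mirrors the strategy the paper carries out in detail for the \emph{constrained} problem in Section~\ref{se: caracterization_optimal}: show $\partial J/\partial\mu>0$ (cf.\ \eqref{eq: deriv_J_resp_mu_cuerpo}) to reduce to the upper boundary $\mu=\min\{\tau,T-t\}$, then analyse the sign of the one-variable derivative via the function $w$ (cf.\ \eqref{eq: Jtilde_primat2} and the case split in Theorem~\ref{te: control_optimo_sigma1gral_rest_v2}). One minor refinement worth noting: for the analogous function $w_b$, the paper does not establish global strict monotonicity on the first subinterval but rather positivity on an initial piece followed by strict decrease (Lemma~\ref{le: wbdecrece}), which still yields at most one sign change; the same two-step argument, rather than outright monotonicity of $w$ on $[0,T-\tau]$, is what \cite{Balderrama22} actually uses.
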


We now compute the partial derivatives of $x_{\infty}(x,y,\sigma)$ with respect to
$x$ and $y$ in the same way that it is done in \cite{ketcheson2020optimal} and \cite{Balderrama22}. 
Recall that given $s>0$ we previously defined $x_{\infty}(x,y,\sigma)=\lim_{t \to \infty} x(t)\), where \((x(t), y(t))\) is the solution of the system with initial condition \((x, y)\) at time $s$ and constant reproduction number \(\sigma(t) \equiv \sigma\) for \(t > s\).

Finally, recall that the solution of the SIR model without control (that is with constant $\sigma(t)=\sigma$), satisfies that 
$$\rho(x(t),y(t),\sigma):= x(t) e^{-\sigma (x(t)+y(t))}$$
is constant in time and $y_{\infty}=0$ (see for instance \cite{Hethcote2000}). Then we have that $x_{\infty}=x e^{\sigma( x_{\infty}-x-y)}=\rho(x,y,\sigma)e^{\sigma x_{\infty}}$ and $w=-\sigma x_{\infty}$ satisfies the equation $we^{w}=-\sigma \rho(x,y,\sigma)$.  Therefore $w=W_0(-\sigma \rho(x,y,\sigma))$ where $W_0$ is the principal branch of Lambert's $W-$function \cite{Pakes2015}, and thus for any $(x,y)\in {\cal D}$
$$x_{\infty}(x,y,\sigma)=-\frac{1}{\sigma} W_0(\sigma \rho(x,y,\sigma)).$$

From this expression we can compute the partial derivatives of $x_{\infty}(x,y,\sigma)$ with respect to
$x$ and $y$ (see \cite{ketcheson2020optimal}):
\begin{subequations} \label{eq: deriv_xinfty}
\begin{align} \label{eq:deriv_xinfty_x}
\frac{\partial x_{\infty}(x,y,\sigma)}{\partial x}&=\frac{1-\sigma x}{x} \frac{x_{\infty}(x,y,\sigma)}{1-\sigma x_{\infty}(x,y,\sigma)}, \\
 \label{eq:deriv_xinfty_y}
\frac{\partial x_{\infty}(x,y,\sigma)}{\partial y}&=-\frac{\sigma x_{\infty}(x,y,\sigma)}{1-\sigma x_{\infty}(x,y,\sigma)}.
\end{align}
\end{subequations}

\section{Necessary conditions for an optimal control}
\label{se: pontryagin}
To derive first order necessary optimality conditions, we apply  
 Pontryagin's Maximum Principle for an optimal control problem with a pure state constraint.
 We assume that the state constraint is not active at the initial and final time, i.e., $y(0) < K, y(T) < K$. The corresponding augmented Hamiltonian $H$ is given by
\begin{align} \label{Hamiltonian}
H(x,y,\sigma,\lambda)= -\lambda_1 (\gamma  \sigma x y) +\lambda_2 (\gamma \sigma x y -\gamma y )+\lambda_3 \sigma
\end{align}
and the Lagrangian $L$
\begin{align} \label{Lagrangian}
L(x,y,\sigma,\lambda,\eta)= H(x,y,\sigma,\lambda)+\eta (y-K),\end{align}

where  $\lambda \in \mathbb{R}^2$ and $\eta\ge 0$. We have omitted the state $v$ since $H$ is independent of this variable.
Given an optimal solution
$(x^*,y^*, v^*,\sigma^*)$, the
necessary conditions for a maximum process $(x^*,y^*,v^*,\sigma^*)$ on $[0,T]$ 
are the following (see \cite{Clarke2013,Hartl1995}):

\begin{quote}
There exists a real number $\lambda_0 \ge 0$, the adjoint variable $\lambda:[0,T] \to \mathbb{R}^3$, which is absolutely continuous, such that 
\begin{align} \label{eq: multiplicadores_cero}
(\lambda_0,\beta,\lambda(t),\eta(t))\neq 0, \quad \text{ for all } t \in [0,T]
\end{align}
 and the following conditions hold:
\begin{enumerate}
\item The adjoint variables $\lambda_i(t)$ satisfy  a.e. $t\in [0,T]$
\begin{subequations} \label{eq: pmp}
\begin{align}
\label{eq: pmp1}
\lambda_1'(t)& = (\lambda_1(t) -\lambda_2(t)) \gamma \sigma^*(t) y^*(t),\\
\label{eq: pmp2} 
\lambda_2'(t)& = (\lambda_1(t)  -\lambda_2(t)) \gamma \sigma^*(t)x^*(t)+\gamma \lambda_2(t)+\eta(t) \\
\label{eq: lambda_expl3}
\lambda_3'(t)& =0,  
\end{align}
with final time conditions (using equation \eqref{eq: deriv_xinfty} and the abbreviation $x_{\infty}$ for $x_{\infty}(x(T),y(T),\sigma_f)$)
\begin{align}
\lambda_1(T)&=\lambda_0 \frac{\partial x_{\infty}}{\partial x(T)}=\lambda_0 \frac{1-\sigma_f x^*(T)}{x^*(T)} \frac{x_{\infty}}{1-\sigma_f x_{\infty}}, \label{eq: pmp3} \\
 \lambda_2(T)&=\lambda_0 \frac{\partial x_{\infty}}{\partial y(T)}=-\lambda_0\frac{\sigma_f x_{\infty}}{1-\sigma_f x_{\infty}}. \label{eq: pmp4} \\
 \lambda_3(T)&=\beta \ge 0 \text{  and  } \lambda_3(T) (v^*(T)-\sigma_f (T-\tau) - \sigma_s \tau)=0. \label{eq: lambda_final3}
 \end{align}

\item The function $\eta \ge 0$ satisfies 
\begin{align} \label{eq: pmp5}
\eta(t) (y^*(t)-K)=0 ,
\end{align}
and is continuous on the interior of each boundary arc.
\item  The jump condition at a contact point 
or junction point $t$ 
is
 \begin{align} \label{eq: pmp6_a}
 \lambda_1(t^+)&=\lambda_1(t^-), \\
\lambda_2(t^+)&=\lambda_2(t^-)+\nu, \,\, \nu \ge 0,   \label{eq: pmp6_b} \\
H(t^-)&=H(t^+). \label{eq: pmp6_c}
\end{align}
\item  For a.e. $t\in [0,T]$
\begin{align*}
H(x^*(t),y^*(t),\sigma^*(t),\lambda(t)) = \max_{\sigma: \sigma_s\leq \sigma \le \sigma_f} H(x^*(t),y^*(t),\sigma,\lambda(t)). 
\end{align*}
We can rewrite
$$H(x^*,y^*,\sigma^*,\lambda)=\phi \sigma^* -\gamma\lambda_2  y $$
where
\begin{align} \label{eq: phi}
\phi(t)=\beta+\gamma x^*(t) y^*(t) (\lambda_2(t)-\lambda_1(t)),
\end{align}
obtaining
\begin{align} \label{eq: pmp7}
& \sigma^*(t) \phi(t) =\max_{\sigma: \sigma_s\leq \sigma \le \sigma_f} \sigma  \phi(t). \end{align}
\item There exists a constant $C$ such that for a.e. $t\in [0,T]$
\begin{align*} 
H(x^*(t),y^*(t),\sigma^*(t),\lambda(t)) =C,
\end{align*}
that is
\begin{align} \label{eq: pmp8}
 \phi(t) \sigma^*(t) -\gamma \lambda_2(t) y^*(t) =C.
\end{align}
\end{subequations}
\end{enumerate}

\end{quote}

As the Hamiltonian is linear in $\sigma(t)$, the optimal control will 
depend on the sign of the switching $\phi$ function, given by \eqref{eq: phi} (see lemma \ref{le: optimo es bb sing}). 
If $\phi(t)=0$ on a subinterval $I \subset [0,T]$ the control $\sigma$ is called {\bf singular} on $I$. 

\begin{lemma} \label{le: optimo es bb sing}
Let $\sigma^*$ be an
optimal control, $(x^*,y^*)$ the associated state given by \eqref{eq: SIR_iso_x} and \eqref{eq: SIR_iso_y} with $
\sigma=\sigma^*$ and $(\lambda_1,\lambda_2)$ solution of equations 
\eqref{eq: pmp1}-\eqref{eq: pmp2} and \eqref{eq: pmp3}-\eqref{eq: pmp4}.  Then
\begin{equation} \label{eq: sigma_bb1}
\sigma^*(t)= \left\{ \begin{array}{ll}
\sigma_f & \text{ if } \phi(t)>0\\
\sigma_{sing}(t) & \text{ if } \phi(t)=0\\
\sigma_s& \text{ if } \phi(t)<0.
\end{array}\right.
\end{equation}
\end{lemma}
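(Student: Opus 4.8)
The plan is to obtain the trichotomy directly from the Pontryagin maximality condition, exploiting that the augmented Hamiltonian is affine in the control. By \eqref{eq: phi} one can write $H(x^*,y^*,\sigma,\lambda)=\phi(t)\,\sigma-\gamma\lambda_2(t)\,y^*(t)$, and since the term $-\gamma\lambda_2 y^*$ is independent of $\sigma$, maximising $H(x^*(t),y^*(t),\cdot,\lambda(t))$ over the compact interval $[\sigma_s,\sigma_f]$ is equivalent to maximising the linear map $\sigma\mapsto\sigma\,\phi(t)$ on that interval; this is exactly the content of \eqref{eq: pmp7}.

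The key step is then an elementary case analysis on the sign of $\phi(t)$, valid for a.e.\ $t\in[0,T]$ where \eqref{eq: pmp7} holds. (It is worth recording first that $\phi$ is continuous on $[0,T]$: the states $x^*,y^*$ are $C^1$ solutions of \eqref{eq: SIR_iso_x}--\eqref{eq: SIR_iso_y}, the adjoints $\lambda_1,\lambda_2$ are absolutely continuous by hypothesis, and $\beta$ is a constant.) If $\phi(t)>0$, the map $\sigma\mapsto\sigma\phi(t)$ is strictly increasing on $[\sigma_s,\sigma_f]$, so its unique maximiser is $\sigma_f$ and \eqref{eq: pmp7} forces $\sigma^*(t)=\sigma_f$. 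If $\phi(t)<0$, it is strictly decreasing, the unique maximiser is $\sigma_s$, hence $\sigma^*(t)=\sigma_s$. If $\phi(t)=0$, the map is constant, so every $\sigma\in[\sigma_s,\sigma_f]$ attains the maximum and \eqref{eq: pmp7} imposes no restriction on $\sigma^*(t)$; on this set one simply defines $\sigma_{sing}(t):=\sigma^*(t)$, which is measurable since $\sigma^*$ is. Collecting the three cases yields \eqref{eq: sigma_bb1}.

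There is no genuinely hard step: the only points deserving a little care are that the identification holds only almost everywhere (because \eqref{eq: pmp7} is itself an a.e.\ statement), and that on the zero set $\{t:\phi(t)=0\}$ the first-order condition truly leaves the control undetermined — which is precisely what the label \emph{singular} records. Actually pinning down $\sigma_{sing}$ on such intervals, and locating or excluding singular arcs altogether, is deferred to the analysis of the following sections (via \eqref{eq: pmp8}, the adjoint equations \eqref{eq: pmp1}--\eqref{eq: pmp2}, and the state constraint), and is not part of the present lemma.
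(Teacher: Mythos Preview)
Your argument is correct and follows exactly the paper's approach: use the linearity of $H$ in $\sigma$ together with \eqref{eq: pmp7} to read off the bang--bang values when $\phi(t)\neq 0$, and define $\sigma_{sing}$ as the optimal control on the zero set. One small caveat: your parenthetical claim that $\phi$ is continuous because $\lambda_2$ is absolutely continuous is premature in this state-constrained setting, since the jump condition \eqref{eq: pmp6_b} allows $\lambda_2$ (and hence $\phi$) to be discontinuous at junction or contact points; continuity of $\phi$ is established only later in Lemma~\ref{le: phi es continua} under an extra hypothesis, and in any case is not needed for the present pointwise trichotomy.
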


\begin{proof}
From the optimality condition \eqref{eq: pmp7} we have that for a nonsingular arc, that is $\phi(t)\neq 0$, 
\begin{equation} \label{eq: sigma_bb}
\sigma^*(t)= \left\{ \begin{array}{ll}
\sigma_f & \text{ if } \phi(t)>0\\
\sigma_s& \text{ if } \phi(t)<0.
\end{array}\right.
\end{equation}
Calling $\sigma_{sing}$ the optimal control corresponding to a singular control 
we obtain \eqref{eq: sigma_bb1}.
\end{proof}

In order to characterise the behaviour of the switching function $\phi$  we analyse 
some properties of the interior and boundary arcs.
Using \eqref{eq: SIR_iso_x}-\eqref{eq: SIR_iso_y} and \eqref{eq: pmp1}-\eqref{eq: pmp2}  we compute the derivative of $\phi$ for a.e. $t$:
\begin{align} \label{eq: phiprima}
\phi'(t)=\gamma x^*(t) y^*(t) (\gamma \lambda_1(t) + \eta(t)).
\end{align}

 {\bf Interior arcs}. Assume $(t_1,t_2)\subset [0,T]$ is an interior arc, that is $y^*(t)<K$ for all $t\in (t_1,t_2).$
Then, from \eqref{eq: pmp5}  we have that $\eta\equiv 0$ in $(t_1,t_2)$. 

We now study the existence of singular controls for interior arcs.
Assume $\phi(t)=0$ on an interval $[a,b]\subset (t_1,t_2)$, then using that $\eta \equiv 0$, from \eqref{eq: phiprima}
 we obtain
\begin{align}
\label{eq: singular3}
0&=\phi'(t)=\gamma^2 \lambda_1(t)  x^*(t) y^*(t) \text{ a.e. } t\in (a,b),
\end{align}
from where we have that $\lambda_1\equiv 0$ on $(a,b)$. Thus, from \eqref{eq: pmp1} $\lambda_2\equiv  0$ on $(a,b)$, concluding that
$\lambda_1\equiv \lambda_2 \equiv 0$ on $(t_1,t_2)$.
As a consequence we have the following remark.
\begin{remark}\label{re: no hay singular arc}
Given $(t_1,t_2)$ an interior arc, if there exists $t\in (t_1,t_2)$ such that $(\lambda_1(t),\lambda_2(t))\neq (0,0)$, 
there cannot be singular arcs in $(t_1,t_2)$.
\end{remark}

 {\bf Boundary arcs}. Assume $y^*(t)=K$ for all $t\in [t_1,t_2]$ and that the 
control corresponding to the
boundary arc computed in \eqref{eq: sigmab} satisfies $\sigma_b(t)\in (\sigma_s,\sigma_f)$ for all $t\in (t_1,t_2)$.
Since $\sigma_s<1$ and $x(t)<1$, from \eqref{eq: sigmab} the latter condition can be reduced to $\sigma_b(t)<\sigma_f$  for all $t\in (t_1,t_2)$. 
Then, from the optimality condition \eqref{eq: pmp7} we deduce that $\phi(t)\equiv 0$ on $(t_1,t_2)$ 
concluding that a boundary arc is a singular arc as shown in \cite{Maurer1977} for controls appearing linearly.
From \eqref{eq: phiprima} we derive that $\eta(t)=-\gamma \lambda_1(t)$ on $(t_1,t_2)$ 
and from \eqref{eq: sigmab} and \eqref{eq: pmp2} we obtain 
$\lambda_2'(t) =0 $ on $(t_1,t_2)$.

\begin{lemma} \label{le: triangulo}
If $(t_1,t_2)$ is a boundary arc, then 
$(t_1,t_2)\subset \{t\in [0,T]:  \lambda_2(t)\le \lambda_1(t)\le 0\}$. Moreover we have that $(\lambda_1(t),\lambda_2(t))\neq (0,0)$ for all $t\in(t_1,t_2)$ or $\lambda_1\equiv \lambda_2\equiv 0$ on $(t_1,t_2)$. \end{lemma}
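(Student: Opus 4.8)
The plan is to read off everything from the three facts just established for boundary arcs in the paragraph preceding the lemma — namely $\phi\equiv 0$ on $(t_1,t_2)$, the identity $\eta=-\gamma\lambda_1$ on $(t_1,t_2)$, and $\lambda_2'\equiv 0$ on $(t_1,t_2)$ — together with the sign conditions $\eta\ge 0$ and $\beta\ge 0$ coming from the Maximum Principle (the latter from \eqref{eq: lambda_final3}).

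First I would establish the two inequalities. Since $\gamma>0$ and $\eta(t)\ge 0$ on the boundary arc, the identity $\eta(t)=-\gamma\lambda_1(t)$ forces $\lambda_1(t)\le 0$ for every $t\in(t_1,t_2)$. For the second inequality I would use the definition \eqref{eq: phi} of the switching function together with $\phi\equiv 0$: on the arc $0=\phi(t)=\beta+\gamma x^*(t)y^*(t)\,(\lambda_2(t)-\lambda_1(t))$, hence $\lambda_2(t)-\lambda_1(t)=-\beta/(\gamma x^*(t)y^*(t))$. The denominator is strictly positive because $x^*(t)>0$ (forward invariance of $\mathcal D$) and $y^*(t)=K>0$ on a boundary arc, so $\beta\ge 0$ gives $\lambda_2(t)\le\lambda_1(t)$. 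Combining the two bounds yields $(t_1,t_2)\subset\{t\in[0,T]:\lambda_2(t)\le\lambda_1(t)\le 0\}$.

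For the dichotomy I would split into cases according to whether $(\lambda_1,\lambda_2)$ vanishes somewhere on the arc. Since $\lambda$ is absolutely continuous and $\lambda_2'\equiv 0$ on $(t_1,t_2)$, the component $\lambda_2$ is constant there. If $(\lambda_1(t^*),\lambda_2(t^*))=(0,0)$ for some $t^*\in(t_1,t_2)$, then $\lambda_2\equiv\lambda_2(t^*)=0$ on all of $(t_1,t_2)$; substituting $\lambda_2\equiv 0$ into $\phi\equiv 0$ gives $\beta=\gamma x^*(t)y^*(t)\lambda_1(t)$ for every $t\in(t_1,t_2)$, and evaluating at $t=t^*$ (where $\lambda_1(t^*)=0$) forces $\beta=0$, whence $\lambda_1(t)=\beta/(\gamma x^*(t)y^*(t))=0$ throughout $(t_1,t_2)$. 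Thus either such a point $t^*$ exists, in which case $\lambda_1\equiv\lambda_2\equiv 0$ on $(t_1,t_2)$, or no such point exists, in which case $(\lambda_1(t),\lambda_2(t))\neq(0,0)$ for all $t\in(t_1,t_2)$; this is exactly the stated alternative.

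The argument is essentially sign bookkeeping, so there is no deep obstacle. The only points requiring care are checking that $\gamma x^*(t)y^*(t)>0$ on the arc — which is where forward invariance of $\mathcal D$ and the hypothesis $K>0$ enter — and making sure that the two structural facts $\eta=-\gamma\lambda_1$ and $\lambda_2'\equiv 0$ are invoked only under the standing assumption $\sigma_b(t)\in(\sigma_s,\sigma_f)$ that defines a boundary arc here, so that no additional hypotheses creep in.
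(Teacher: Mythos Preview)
Your proof is correct and follows essentially the same route as the paper's: both obtain $\lambda_1\le 0$ from $\eta=-\gamma\lambda_1\ge 0$, both obtain $\lambda_2\le\lambda_1$ from $\phi\equiv 0$ together with $\beta\ge 0$, and both use constancy of $\lambda_2$ on the arc for the dichotomy. The only minor variation is in the last step: to conclude $\lambda_1\equiv 0$ once $\lambda_2\equiv 0$, the paper invokes the linear ODE \eqref{eq: pmp1} (which reduces to $\lambda_1'=\gamma K\sigma_b\lambda_1$) and uniqueness, whereas you reuse the algebraic relation $\phi=0$ to pin down $\beta=0$ first and then read off $\lambda_1\equiv 0$ --- both arguments are equally short and valid.
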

\begin{proof}
Let  $(t_1,t_2)$ be a boundary arc.
Since $\eta\ge 0$, from equality  $\eta=-\gamma \lambda_1$ on boundary arcs we obtain $\lambda_1(t) \le 0$ for all $t\in (t_1,t_2)$. 
In addition, for $t$ such that $\lambda_1(t)<\lambda_2(t)$ it holds that $\phi(t)>0$ implying that there cannot be singular arcs 
on $\{t\in [0,T]:  \lambda_1(t)<\lambda_2(t)\}$ and thus neither 
boundary arcs. 
Moreover, from \eqref{eq: pmp8} and using that $\phi(t)=0$  for all $t\in (t_1,t_2)$, we have that $\lambda_2\equiv -C/(\gamma K)$ is constant. Thus, if $(\lambda_1,\lambda_2)(s)=(0,0)$ for some $s\in (t_1,t_2)$, we would have that  $\lambda_2\equiv 0$ on $(t_1,t_2)$ and from equation \eqref{eq: pmp1}, $\lambda_1$ is a solution of the linear equation
$
\lambda_1'(t) = \gamma K  \sigma_b(t)  \lambda_1(t)
$ 
on $(t_1,t_2)$ 
with $\lambda_1(s)=0$ concluding that $\lambda_1\equiv 0$, yielding also $\eta\equiv 0$.
 \end{proof}

From the aforementioned we have that
\begin{equation} \label{eq: phiprima2}
\phi'(t)= \left\{ \begin{array}{ll}
\gamma^2 \lambda_1(t)  x^*(t) y^*(t) & \text{ for nonsingular (interior) arcs } \\
0 \, \, (\text{and} \, \, \phi\equiv 0)& \text{ for singular (interior or boundary) arcs }.
\end{array}\right.
\end{equation}

\begin{remark}\label{eq: ymenork} Note that $y^0(t^0)$ is the global maximum of $y^0$ on $[0,T]$ and $t^0\le t_m$ where $x^0(t_m)=1/\sigma_f$. In fact, from \eqref{eq: SIR_iso_y} we have that $y(t)$ is decreasing for $t$ such that $x(t)< 1/\sigma_f$, proving that $t^0 \le t_m$.
Moreover, for the cases  {\it 3.} and {\it 4.} in theorem \ref{te: control_optimo_sigma1gral},  $t^0+\mu^0=T$ and therefore the optimal control $\sigma(t)=\sigma_f$ for $t\in [0,t^0)$ and $\sigma(t)=\sigma_s$ for  $t\in (t^0,T]$. 
On the other hand, for the cases {\it 1.} and {\it 2.} observe that  $w(t^0)\le 0$, and thus, from \eqref{eq: omega}  and \eqref{eq: SIR_iso_x}, it must hold  $x^0(t^0+\mu^0)\le1/\sigma_f$, from where we derive the desired result.
\end{remark}
Given $y^0$ as in Theorem \ref{te: control_optimo_sigma1gral}, we consider two cases. 

{\bf Case 1}: $y^0(t)\le K$ for all $t\in [0,T]$. 
This mean that the optimal solution of problem \eqref{eq: funcional_J_iso}-\eqref{eq: rest_int} without 
the running state constraints satisfies also this latter condition \eqref{eq: restr_yt} implying that is also 
an optimal solution of the complete problem \eqref{eq: SIR_iso}.
In this case we will have that there is no boundary arc.

{\bf Case 2}: {$\displaystyle \max_{[0,T]} y^0(t)>K$}, that is, exists $t\in [0,T]$ such that $y^0(t)>K$. 
From Remark \ref{eq: ymenork} $y^0$ attains its maximum at $t^0$. Then, we can define 
 \begin{equation} \label{eq: tb}
 t_b= \min \left\{ t\in [0,t^0]: y^0(t)=K\right\}
 \end{equation} 
 satisfying $t_{b}\in (0,t^0)$ where we have used that the initial condition $y_0<K$.


It is worth noticing that since $\sigma(t)\in [\sigma_s,\sigma_f]$ for all $t\in [0,T]$, from \eqref{eq: SIR_iso_y} it holds $y'(t)<0$ for all $t>r$ where $x(r)=1/\sigma_f$, from where we derive that there cannot be any junction or contact point $s>r$.

\begin{lemma} \label{le: phi es continua}
Assuming that for any contact or exit point $s$, $x^{*}(s)>\frac{1}{\sigma_f}$, then the switching function $\phi$ and $\lambda_2$ are continuous.
\end{lemma}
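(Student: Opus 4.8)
The plan is to prove the lemma by showing that the nonnegative jump $\nu$ appearing in \eqref{eq: pmp6_b} vanishes at every contact or junction point. This suffices: $\lambda_1$, $x^*$, $y^*$ are continuous, $\lambda_2$ is continuous away from the contact and junction points, and by \eqref{eq: phi} we have $\phi=\beta+\gamma\, x^* y^*(\lambda_2-\lambda_1)$ with $\gamma\, x^*(t)\, y^*(t)>0$ for all $t$; hence continuity of $\phi$ on $[0,T]$ is equivalent to continuity of $\lambda_2$ on $[0,T]$, and both amount to the vanishing of all the jumps.

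Fix a contact or junction point $s\in(0,T)$, so $y^*(s)=K$. Since the running constraint $y^*\le K$ holds on all of $[0,T]$, the point $s$ is an interior maximum of $y^*$, so its one-sided derivatives satisfy $y^{*\prime}(s^-)\ge 0$ and $y^{*\prime}(s^+)\le 0$; by \eqref{eq: SIR_iso_y} and $y^*(s)=K$ these read $\sigma^*(s^-)\,x^*(s)\ge 1$ and $\sigma^*(s^+)\,x^*(s)\le 1$, where the one-sided limits of $\sigma^*$ exist because $\sigma^*$ is piecewise continuous and, by \eqref{eq: phiprima}, $\phi$ is $C^1$ on interior arcs, so on a one-sided neighbourhood of $s$ the function $\phi$ either keeps a fixed sign or vanishes identically (a singular or boundary arc).

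Then I would read off the sign of $\phi$ on each side of $s$. On the left: if $\phi<0$ on a left neighbourhood, that neighbourhood is an interior arc (since $\phi\equiv 0$ on boundary arcs) and $\sigma^*\equiv\sigma_s$ there, giving $y^{*\prime}(s^-)=\gamma K(\sigma_s x^*(s)-1)<0$, because $\sigma_s<1$ and $x^*(s)<1$ (the latter from forward-invariance of ${\cal D}$ and $y^*(s)=K>0$) force $\sigma_s x^*(s)<1$ --- contradicting $y^{*\prime}(s^-)\ge 0$; hence $\phi(s^-)\ge 0$. On the right: if $\phi>0$ on a right neighbourhood then $\sigma^*\equiv\sigma_f$ there, giving $y^{*\prime}(s^+)=\gamma K(\sigma_f x^*(s)-1)>0$ since the hypothesis $x^*(s)>1/\sigma_f$ gives $\sigma_f x^*(s)>1$ --- contradicting $y^{*\prime}(s^+)\le 0$; hence $\phi(s^+)\le 0$. (When $s$ is an entry or exit point, one of the adjacent arcs is a boundary arc where $\phi\equiv 0$, so the hypothesis is genuinely needed only to exclude a $\sigma_f$-arc immediately following a contact or exit point, which is exactly what is assumed.) To close, by \eqref{eq: phi}, continuity of $x^*,y^*,\lambda_1$ and $y^*(s)=K$,
\[
\phi(s^+)-\phi(s^-)=\gamma\, x^*(s)\,K\,(\lambda_2(s^+)-\lambda_2(s^-))=\gamma\, x^*(s)\,K\,\nu\ \ge\ 0
\]
by \eqref{eq: pmp6_b}; combined with $\phi(s^-)\ge 0\ge \phi(s^+)$ this forces $\phi(s^-)=\phi(s^+)$, so $\gamma\, x^*(s)\,K\,\nu=0$ and hence $\nu=0$. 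As $s$ was arbitrary, $\lambda_2$ and $\phi$ are continuous on $[0,T]$.

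The step I expect to be the main obstacle is the careful justification that $\sigma^*$ admits one-sided limits at $s$ and that $\phi$ does not oscillate in sign as $t\to s^{\pm}$ (no chattering near the junction); I would settle this using \eqref{eq: phiprima} (so that $\phi'=\gamma^2\lambda_1 x^* y^*$ is continuous on interior arcs) together with Remark~\ref{re: no hay singular arc} and Lemma~\ref{le: triangulo} to rule out pathological singular subarcs. Everything else is the elementary sign chase above.
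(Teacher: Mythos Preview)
Your argument is correct and is essentially the same as the paper's: both combine the nonnegative jump $\phi(s^+)-\phi(s^-)=\gamma x^*(s)K\,\nu\ge 0$ with the sign information $\phi(s^-)\ge 0$ and $\phi(s^+)\le 0$ coming from the one-sided behaviour of $y^*$ at the boundary, and then conclude $\nu=0$. The only cosmetic difference is that the paper splits the analysis into the three cases contact/entry/exit (using $\phi\equiv 0$ on the adjacent boundary arc for entry and exit points), whereas you treat all three uniformly via $y^{*\prime}(s^-)\ge 0$, $y^{*\prime}(s^+)\le 0$; your parenthetical remark about when the hypothesis $x^*(s)>1/\sigma_f$ is actually invoked matches the paper's case split exactly.
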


\begin{proof}
From \eqref{eq: pmp6_b} we have that for any junction or contact point $s$ it holds:
\begin{align} \label{eq: saltolambda2}
\lambda_2(s^+)-\lambda_2(s^-)=\nu
\end{align}
with $\nu \ge 0$ implying
\begin{align} \label{eq: saltophi}
\phi(s^+)-\phi(s^-)=\gamma x^*(s) y^*(s) \nu \ge 0.
\end{align}
When $s$ is a contact point such that $x^*(s)> \frac{1}{\sigma_f}$, from \eqref{eq: sigma_bb}, it holds $\phi(s^-)\ge 0$
and $\phi(s^+)\le 0$ respectively. Thus, from \eqref{eq: saltophi} we obtain $\phi(s^+)=\phi(s^-)=0$.
If $s$ is an entry point, then $\phi(s^+)=0$ implying from \eqref{eq: saltophi} that $\phi(s^-)\le 0$. Assume $\phi(s^-)<0$, then from \eqref{eq: sigma_bb}, $\sigma(s^-)=\sigma_s$ and therefore $y'(s^-)<0$  contradicting that $s$ is an entry point. It follows that $\phi(s^+)=\phi(s^-)=0$.
Finally, if $s$ is an exit point, then $\phi(s^-)=0$ and  from \eqref{eq: saltophi} $\phi(s^+)\ge 0$. Assume $\phi(s^+)>0$, then from \eqref{eq: sigma_bb} $\sigma(s^+)=\sigma_f$ and therefore, using that $x^*(s)> \frac{1}{\sigma_f}$, we have  $y'(s^+)>0$ yielding a contradiction.

It follows that $\phi$ is continuous and therefore from  \eqref{eq: saltolambda2} $\lambda_2$ is also a continuous function.
\end{proof}

Moreover, we have the following lemma.
\begin{lemma} \label{le: extremal restringido tiene arco frontera}
If {$\displaystyle \max_{[0,T]} y^0(t)>K$} and any contact point $s$ of the constraint extremal satisfies $x^*(s)>\frac{1}{\sigma_f}$, then 
the constrained extremal has at least one boundary arc.
\end{lemma}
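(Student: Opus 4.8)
\emph{Proof proposal.} The plan is to argue by contradiction: I will show that if the constrained extremal had no boundary arc, then it would satisfy \emph{exactly} the first-order necessary conditions of the unconstrained problem \eqref{eq: funcional_J_iso}--\eqref{eq: rest_int}; by the uniqueness part of Theorem~\ref{te: control_optimo_sigma1gral} it would then coincide with the unconstrained optimal control $\sigma^0$ of \eqref{eq: sigmacero_con_rest_int}, whose associated state $y^0$ satisfies $\max_{[0,T]}y^0>K$ in Case~2. This contradicts the feasibility condition \eqref{eq: restr_yt}, so the constrained extremal must contain at least one boundary arc.

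Concretely, I would assume there is no boundary arc. Since $y^*(0),y^*(T)<K$, the set $\{t\in[0,T]:y^*(t)=K\}$ then consists of isolated contact points, so $y^*(t)<K$ for a.e.\ $t$ and the complementarity relation \eqref{eq: pmp5} gives $\eta\equiv0$ a.e.\ on $[0,T]$. By hypothesis every contact point $s$ satisfies $x^*(s)>1/\sigma_f$, so Lemma~\ref{le: phi es continua} applies and $\lambda_2$ (hence $\phi$) is continuous on $[0,T]$; combined with the jump condition \eqref{eq: pmp6_b} this forces $\nu=0$ at every contact point, i.e.\ the adjoint $\lambda=(\lambda_1,\lambda_2,\lambda_3)$ has no jumps. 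Consequently $(x^*,y^*,v^*,\sigma^*)$, the absolutely continuous $\lambda$, and the scalars $\lambda_0\ge0$, $\beta=\lambda_3(T)\ge0$ satisfy the full PMP system \eqref{eq: pmp1}--\eqref{eq: pmp8} with $\eta\equiv0$, which is precisely the set of necessary conditions for the unconstrained problem treated in Theorem~\ref{te: control_optimo_sigma1gral}. (As in \cite{Balderrama22} one rules out $\lambda_0=0$: it would force $\lambda_1\equiv\lambda_2\equiv0$, hence $\beta>0$ by nontriviality and $\phi\equiv\beta>0$, so $\sigma^*\equiv\sigma_f$ and $v^*(T)=\sigma_f T>\sigma_s\tau+\sigma_f(T-\tau)$, contradicting the complementarity in \eqref{eq: lambda_final3}; thus $\lambda_0$ may be normalised to $1$.)

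Finally, I would invoke the uniqueness statement of Theorem~\ref{te: control_optimo_sigma1gral} --- more precisely, the fact established in \cite{Balderrama22} that these necessary conditions are met by a \emph{unique} control, namely $\sigma^0$ --- to conclude $\sigma^*=\sigma^0$ and $y^*=y^0$, whence $y^*(t)>K$ for some $t\in[0,T]$, contradicting \eqref{eq: restr_yt}. The main obstacle, and the step where the hypothesis $x^*(s)>1/\sigma_f$ at contact points is indispensable, is the middle part: eliminating the adjoint jumps and the multiplier $\eta$ so that ``no boundary arc'' genuinely reduces the constrained extremal to an unconstrained one. Once Lemma~\ref{le: phi es continua} supplies continuity of $\lambda_2$, this reduction is immediate and the uniqueness result of Theorem~\ref{te: control_optimo_sigma1gral} closes the argument.
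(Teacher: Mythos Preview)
Your proposal is correct and follows essentially the same route as the paper's proof: assume no boundary arc, use the complementarity condition to get $\eta\equiv 0$ a.e., invoke Lemma~\ref{le: phi es continua} to obtain continuity of $\lambda_2$, conclude that the constrained extremal satisfies the unconstrained PMP system, and then appeal to the uniqueness result from \cite{Balderrama22} to force $\sigma^*=\sigma^0$, contradicting $\max y^0>K$. The paper's own proof is terser (it compresses the contradiction into the phrase ``contradicting its uniqueness'' and treats the normality issue in the separate Lemma~\ref{le: es_normal}), but the substance is the same.
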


\begin{proof}
If the constraint extremal satisfies $y^*(t)<K$ for all $t\in [0,T]$, then there are not any junction points and therefore the first order necessary conditions \eqref{eq: pmp1}-\eqref{eq: pmp8} hold for $\eta \equiv 0$ and $\lambda_2$ a continuous function. 
On the other hand, if the constraint extremal has only contact points, $\eta\equiv 0$ almost everywhere and from Lemma \ref{le: phi es continua}, $\lambda_2$ is continuous. 
Thus, in both cases, the constraint extremal satisfies also the necessary conditions of the maximum principle for the unconstrained extremal, contradicting its uniqueness. This proves the desired result.
\end{proof}

We now prove that the problem is normal. 

\begin{lemma}\label{le: es_normal}
If the constraint extremal satisfies $y^*(T)<K$ and any contact point $s$ satisfies $x^*(s)>\frac{1}{\sigma_f}$, then 
the optimal control problem \eqref{eq: SIR_iso} is normal, that is $\lambda_0>0$.
\end{lemma}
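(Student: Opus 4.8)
The plan is a proof by contradiction. Suppose the problem is abnormal, i.e. $\lambda_0 = 0$; I will show this forces every multiplier to vanish, contradicting the nontriviality condition \eqref{eq: multiplicadores_cero}. The first consequence is immediate: with $\lambda_0 = 0$ the transversality relations \eqref{eq: pmp3}--\eqref{eq: pmp4} give $\lambda_1(T) = \lambda_2(T) = 0$ (the remaining factors being finite since $x^*(T) > 0$ and $x_\infty < 1/\sigma_f$).

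The heart of the argument is to propagate these vanishing terminal values backwards and obtain $\lambda_1 \equiv \lambda_2 \equiv 0$ on $[0,T]$. Because $y^*(T) < K$, the extremal ends with an interior arc $(\bar t, T]$ (possibly $\bar t = 0$); there $\eta \equiv 0$ by \eqref{eq: pmp5}, so $(\lambda_1,\lambda_2)$ solves the homogeneous linear system \eqref{eq: pmp1}--\eqref{eq: pmp2} with zero terminal data, hence vanishes identically on $(\bar t, T]$. Now I move backwards, arc by arc, through the (finitely many) junction and contact points. Across any such point $\lambda_1$ is continuous by \eqref{eq: pmp6_a}, and $\lambda_2$ is continuous by Lemma \ref{le: phi es continua} --- this is exactly where the hypothesis $x^*(s) > 1/\sigma_f$ at the contact (and exit) points is used --- so both $\lambda_1$ and $\lambda_2$ vanish at each junction or contact point that is reached. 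On an interior arc the homogeneous system \eqref{eq: pmp1}--\eqref{eq: pmp2} (with $\eta \equiv 0$) again forces the zeros; on a boundary arc we have $\lambda_2' \equiv 0$ and $\eta = -\gamma\lambda_1$, so $\lambda_2$ is constant and therefore $\equiv 0$ (it vanishes at the exit point), while $\lambda_1$ solves the scalar homogeneous equation $\lambda_1' = \gamma K \sigma_b \lambda_1$ with a zero endpoint, whence $\lambda_1 \equiv 0$ and then also $\eta = -\gamma\lambda_1 \equiv 0$ there. Iterating down to $t = 0$ yields $\lambda_1 \equiv \lambda_2 \equiv 0$ and $\eta \equiv 0$ on $[0,T]$, so that $\phi \equiv \beta$ by \eqref{eq: phi} (recall $\lambda_3 \equiv \beta$ by \eqref{eq: lambda_expl3} and \eqref{eq: lambda_final3}).

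It remains to eliminate $\beta$. By \eqref{eq: lambda_final3}, $\beta = \lambda_3(T) \ge 0$. If $\beta = 0$, then $(\lambda_0,\beta,\lambda(\cdot),\eta(\cdot)) \equiv 0$, contradicting \eqref{eq: multiplicadores_cero}. If $\beta > 0$, then $\phi \equiv \beta > 0$, so the maximality condition \eqref{eq: pmp7} forces $\sigma^*(t) = \sigma_f$ for a.e.\ $t \in [0,T]$; hence $v^*(T) = \sigma_f T > \sigma_s\tau + \sigma_f(T-\tau)$ (strict since $0 \le \sigma_s < \sigma_f$ and $\tau > 0$), which contradicts the complementary slackness relation $\lambda_3(T)\bigl(v^*(T) - \sigma_f (T-\tau) - \sigma_s \tau\bigr) = 0$ in \eqref{eq: lambda_final3}. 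Both alternatives being impossible, $\lambda_0 > 0$. The main obstacle is the backwards propagation of the vanishing adjoint across junctions and, especially, across boundary arcs: it relies essentially on the continuity of $\lambda_2$ furnished by Lemma \ref{le: phi es continua} together with the explicit form of the adjoint equations on boundary arcs derived above; once $\lambda_1 \equiv \lambda_2 \equiv 0$ is established, the elimination of $\beta$ is straightforward, using that $\sigma^* \equiv \sigma_f$ makes the integral constraint \eqref{eq: rest_int} strictly inactive.
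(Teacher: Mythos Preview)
Your proof is correct and follows the same overall architecture as the paper's: assume $\lambda_0=0$, deduce $\lambda_1(T)=\lambda_2(T)=0$ from \eqref{eq: pmp3}--\eqref{eq: pmp4}, extend to $\lambda_1\equiv\lambda_2\equiv0$ on $[0,T]$, and then reach a contradiction with the complementary slackness condition \eqref{eq: lambda_final3} via $\sigma^*\equiv\sigma_f$.

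The execution of the middle step differs. The paper first invokes nontriviality \eqref{eq: multiplicadores_cero} on the terminal interior arc to obtain $\beta>0$ right away, and then uses the continuity of $\phi$ (Lemma~\ref{le: phi es continua}) together with the fact that $\phi\equiv0$ on any boundary arc to argue that no boundary arc can occur; once the whole trajectory is interior, the vanishing of $(\lambda_1,\lambda_2)$ follows from the homogeneous linear system. You instead propagate the zero values of $(\lambda_1,\lambda_2)$ backwards arc by arc, handling boundary arcs directly through the explicit adjoint structure there ($\lambda_2'\equiv0$, $\eta=-\gamma\lambda_1$, and the scalar equation $\lambda_1'=\gamma K\sigma_b\lambda_1$), and only afterwards split on whether $\beta=0$ or $\beta>0$. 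Your route is more hands-on and makes the mechanism on boundary arcs fully transparent; the paper's is shorter, since once $\beta>0$ is known the boundary arcs are excluded in one stroke without further ODE analysis. One minor point: your phrase ``the (finitely many) junction and contact points'' is not established a priori, but your argument goes through unchanged if recast as a maximal-interval continuation (take the infimum of $t$ with $(\lambda_1,\lambda_2)\equiv 0$ on $[t,T]$ and show it is $0$), which is implicitly what both proofs are doing.
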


\begin{proof}
Assume $\lambda_0=0$, then from \eqref{eq: pmp3} -\eqref{eq: pmp4}, $\lambda_1(T)=\lambda_2(T)=0$. Since $y^*(T)<K$, there exists $\delta>0$ such that $y^*(t)<K$ for all $t\in(T-\delta,T]$ and therefore, from \eqref{eq: pmp5}, $\eta(t)=0$ for all $t\in (T-\delta,T]$. Thus, from \eqref{eq: pmp1}-\eqref{eq: pmp2} we have $\lambda_1(t)=\lambda_2(t)=0$ for all $t\in (T-\delta,T]$ and therefore, from \eqref{eq: phi}, $\phi(t)=\beta\ge 0$  for all $t\in (T-\delta,T]$. Using the non-triviality condition \eqref{eq: multiplicadores_cero}, we obtain $\beta>0$ and from Lemma \ref{le: phi es continua}, $\phi$ is a continuous function yielding there cannot be a boundary arc on any interval of the form $[r,T-\delta]$ since in this case we would have $\phi(t)=0$ for all $t\in (r,T-\delta)$. 
Thus we conclude that $y^*(t)<K$ for almost every $t\in [0,T]$ implying $\phi(t)=\beta>0$ for all $t\in [0,T]$. Then, from \eqref{eq: sigma_bb} we have $\sigma^*(t)=\sigma_f$ for all $t\in [0,T]$ contradicting the complementarity condition \eqref{eq: lambda_final3}. 
This proves the statement, then we can consider $\lambda_0=1$.

\end{proof}
In the following lemma we prove the general expression of an optimal control.

\begin{lemma} \label{le: dos saltos}
If {$\displaystyle \max_{[0,T]} y^0(t)>K$} and any contact or exit point $s$ of the constraint extremal satisfies $x^*(s)>\frac{1}{\sigma_f}$, then the generalised optimal solution $\sigma$ of problem \eqref{eq: SIR_iso} is given by
\begin{equation} \label{eq: sigma opt gral}
\sigma(t)= \left\{ \begin{array}{ll}
\sigma_f & \text{ for  } 0\le t \le t_1 \\
\sigma_b(t)=\frac{1}{x(t)}& \text{ for  } t_1< t \le t_2 \\
\sigma_s & \text{ for  } t_2< t \le t_3 \\
\sigma_f&  \text{ for  } t_3< t \le T \\
\end{array}\right.
\end{equation}
where $0\le t_1 < t_2 \le t_3 < T$, $t_1,t_2 \in \{t\in [0,T]:  \lambda_2(t)\le \lambda_1(t)\le 0\}$ and $y(t)=K$ for all $t\in[t_1,t_2]$. 
\end{lemma}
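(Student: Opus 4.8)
The plan is to read the shape of the optimal control directly off the sign of the switching function $\phi$ of \eqref{eq: phi}, combining the structural results already in hand with a sign analysis of $\phi$ along the bang arcs.

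First I would collect the set-up. Since the state constraint is inactive at $T$, Lemma~\ref{le: es_normal} gives $\lambda_0=1$, Lemma~\ref{le: phi es continua} gives that $\phi$ and $\lambda_2$ are continuous, and Lemma~\ref{le: extremal restringido tiene arco frontera} gives at least one boundary arc. I would also record that, in Case 2, $x_0>1/\sigma_f$ (otherwise $\max y^0=y_0<K$), that $x^*$ is non-increasing for every admissible control so that $\{x^*>1/\sigma_f\}$ is an initial sub-interval, and that $\sigma_b=1/x^*$ lies in $(\sigma_s,\sigma_f)$ exactly where $x^*>1/\sigma_f$; hence every boundary arc --- and, as already noted, every junction or contact point --- lies in $\{x^*>1/\sigma_f\}$. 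Finally, in a $\sigma_s$-phase $(y^*)'=\gamma y^*(\sigma_s x^*-1)<0$ since $\sigma_s x^*<\sigma_s<1$, whereas in a $\sigma_f$-phase with $x^*>1/\sigma_f$ one has $(y^*)'>0$.

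Next I would rule out interior singular arcs: by Remark~\ref{re: no hay singular arc} it suffices to exclude $(\lambda_1,\lambda_2)\equiv(0,0)$ on an interior arc, and this cannot happen, for then --- the adjoint system being linear, and Lemma~\ref{le: triangulo} forcing $(\lambda_1,\lambda_2)$ to be nowhere zero or identically zero on any later boundary arc --- the pair would vanish up to $T$, contradicting $\lambda_2(T)=-\sigma_f x_\infty/(1-\sigma_f x_\infty)<0$. Thus on interior arcs $\sigma^*=\sigma_f$ where $\phi>0$ and $\sigma^*=\sigma_s$ where $\phi<0$, with $\phi=0$ only at isolated points, and $\phi'=\gamma^2\lambda_1 x^* y^*$ there. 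Differentiating this identity and using \eqref{eq: pmp1}, \eqref{eq: phi} and the state equations, I would obtain that on each bang arc $\phi$ satisfies the linear second-order equation
\begin{align*}
\phi''=\gamma\bigl(\sigma^*(x^*-y^*)-1\bigr)\phi'-\gamma^2\sigma^*\,y^*\,\phi+\gamma^2\sigma^*\beta\,y^*,
\end{align*}
so that at an interior extremum of $\phi$ (where $\phi'=0$, hence $\lambda_1=0$) one has $\phi''=\gamma^2\sigma^*y^*(\beta-\phi)$; in particular $\phi\ge\beta$ at any interior local maximum of $\phi$ on a $\sigma_f$-arc. Around a boundary arc $[t_1,t_2]$ the sign constraints $(y^*)'(t_1^-)\ge0$, $(y^*)'(t_2^+)\le0$ together with $x^*>1/\sigma_f$ there force $\sigma^*=\sigma_f$ ($\phi>0$) immediately left of $t_1$ and $\sigma^*=\sigma_s$ ($\phi<0$) immediately right of $t_2$; by Lemma~\ref{le: triangulo}, $t_1,t_2\in\{\,\lambda_2\le\lambda_1\le0\,\}$, and from \eqref{eq: pmp8} on $[t_1,t_2]$ the constant value of the Hamiltonian is $C=-\gamma\lambda_2(t_1)K>0$.

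It then remains to show that $\{\phi\le0\}$ is a single interval $[t_1,t_3]$ containing the boundary arc $[t_1,t_2]$ --- equivalently, that there is exactly one boundary arc, preceded by one $\sigma_f$-arc and followed only by a $\sigma_s$-arc and then a $\sigma_f$-arc --- and this is the step I expect to be the main obstacle. The idea is to count the zeros of $\phi$: feeding the bound $\phi\ge\beta$ at interior maxima on $\sigma_f$-arcs (and $\phi<0\le\beta$ on $\sigma_s$-arcs) into the second-order equation above, and using the continuity of $\phi$, the transversality values \eqref{eq: pmp3}--\eqref{eq: pmp4}, the constant $C>0$, and the complementary-slackness relation \eqref{eq: lambda_final3} (which fixes $\beta$ and hence the sign of $\phi$ near $T$), one would exclude every further zero of $\phi$: no second boundary arc, no contact point, no $\sigma_s$-phase before the first $\sigma_f$-phase, and $\phi$ not $\le0$ up to $T$. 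This yields the asserted form \eqref{eq: sigma opt gral} with $0\le t_1<t_2\le t_3<T$, together with $t_1,t_2\in\{\,\lambda_2\le\lambda_1\le0\,\}$ and $y^*\equiv K$ on $[t_1,t_2]$. The $(y^*)'$-sign arguments are routine; the delicate part is controlling the number of sign changes of $\phi$ (equivalently, of $\lambda_1$) along the bang arcs.
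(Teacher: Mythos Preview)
Your set-up is correct and matches the paper's (continuity of $\phi$, existence of a boundary arc, exclusion of interior singular arcs, the sign of $(y^*)'$ immediately to the left and right of a boundary arc, and $t_1,t_2\in\{\lambda_2\le\lambda_1\le 0\}$ from Lemma~\ref{le: triangulo}). Your second-order identity for $\phi$ on bang arcs is also correct.

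The genuine gap is the zero-counting step you flag yourself. The bound ``$\phi\ge\beta$ at an interior maximum, $\phi\le\beta$ at an interior minimum'' does \emph{not} limit the number of sign changes of $\phi$: nothing prevents a pattern $+$hump (peak $\ge\beta$), zero, $-$hump (valley $\le 0\le\beta$), zero, $+$hump, \dots, all of which is compatible with your second-order inequality. The complementary-slackness relation \eqref{eq: lambda_final3} does not ``fix $\beta$'' either; it only says $\beta=0$ or the integral constraint is saturated, and the paper's structure in fact allows $\phi(T)<0$ (then $t_3=T$). So as written, the tools you list do not close the argument.

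The paper's approach is different and more direct: rather than a second-order equation for $\phi$, it works in the $(\lambda_1,\lambda_2)$ phase plane. The key observations are (i) on interior arcs $\phi'=\gamma^2\lambda_1 x^*y^*$, so the monotonicity of $\phi$ is the sign of $\lambda_1$; (ii) from \eqref{eq: pmp1}, $\lambda_1'=(\lambda_1-\lambda_2)\gamma\sigma^*y^*$, so $\lambda_1$ is strictly increasing (forward) whenever $\lambda_1>\lambda_2$; and (iii) from the Hamiltonian constancy \eqref{eq: pmp8}, at any time with $\lambda_1=\lambda_2$ one has $\phi=\beta$ and $\lambda_2=(\beta\sigma^*-C)/(\gamma y^*)$, which forces all such crossings to have $\lambda_2$ of a single sign. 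Running backward from the terminal data $\lambda_1(T)>0>\lambda_2(T)$ and using (ii)--(iii) together with a short check that the trajectory cannot exit into the first quadrant or through the origin, the paper obtains that $\lambda_1$ changes sign at most once (at some $\tilde t$), giving immediately the five-piece structure \eqref{eq: phi_tramos} of $\phi$ and hence \eqref{eq: sigma opt gral}. If you want to rescue your route, the cleanest fix is to drop the second-order equation and use exactly this monotonicity of $\lambda_1$ in the region $\lambda_1>\lambda_2$ together with the semi-line argument from \eqref{eq: pmp8}.
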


\begin{proof}
From Lemmas \ref{le: phi es continua} and \ref{le: extremal restringido tiene arco frontera} we have that the constraint extremal has at least one boundary arc and $\lambda_2$ and $\phi$ are continuous functions.

The proof follows by analysing the phase diagram of $\lambda_1,\lambda_2$.
We begin by noting that a solution $(\lambda_1,\lambda_2)$ of the system 
\eqref{eq: pmp1}-\eqref{eq: pmp2} with final conditions \eqref{eq: pmp3}-\eqref{eq: pmp4} cannot cross both semilines 
$\lambda_1=\lambda_2\ge 0$ and $\lambda_1=\lambda_2<0$ or $\lambda_1=\lambda_2> 0$ and $\lambda_1=\lambda_2\le 0$. We prove the first case, the other is completely analogous.
In fact, assume there exist $s_1,s_2 \in [0,T]$ such that $\lambda_1(s_1)=\lambda_2(s_1)\ge 0$ 
and $\lambda_1(s_2)=\lambda_2(s_2)<0$.  
Evaluating \eqref{eq: phi} on $t=s_i$ for $i=1,2$ we have that $\phi(s_i)=\beta \ge 0$ and 
from \eqref{eq: pmp8}, $\lambda_2(s_i)=\frac{\beta \sigma^*(s_i)-C}{\gamma y^*(s_i)}$. 
 If $\beta=0$ it holds that $\lambda_2(s_i)=-\frac{C}{\gamma y^*(s_i)}$ and
if  $\beta>0$, using \eqref{eq: sigma_bb1}, we have that
 $\lambda_2(s_i)=\frac{\beta \sigma_f -C}{\gamma y^*(s_i)}$,  both cases contradicting that 
 $\lambda_2(s_1)$ and $\lambda_2(s_2)$ had opposite signs.

Since we have end time conditions on $T$ we go backwards from $(\lambda_1(T),
\lambda_2(T))$ with $\lambda_1(T)>0$ and $\lambda_2(T)<0$ (see \eqref{eq: pmp3} and \eqref{eq: pmp4}).
 Then, the solution $(\lambda_1,\lambda_2)$ stays in the set $\left\{t\in[0,T]: \lambda_1(t)>0, \lambda_2(t)<0\right\}$ either for all $t\in [0,T]$ or for some maximal interval to the left of $T$, named $(T-\delta,T]$.
For the latter case we have $\lambda_1(T-\delta)\le 0$ or $ \lambda_2(T-\delta)\ge 0$ and from Lemma \ref{le: triangulo}, there are only interior arcs on $(T-\delta,T]$, concluding 
from \eqref{eq: pmp5} that $\eta \equiv 0$ on $(T-\delta,T]$. Moreover, from Remark \ref{re: no hay singular arc}, we conclude that there cannot be singular arcs on $(T-\delta,T]$. 

From \eqref{eq: pmp1} and \eqref{eq: pmp2}, for $\lambda_2=0$ and $\lambda_1>0$, both $\lambda_1$ and $\lambda_2$ are increasing, showing that the trajectory backwards in time cannot go from the fourth to the first quadrant.

Since  $\lambda_1$ is increasing for $\lambda_1>\lambda_2$, the trajectory moves backwards in time to the left in the direction of the third quadrant. Moreover, it cannot touch the origin. In fact, 
assume $\lambda_1(T-\delta)=\lambda_2(T-\delta)=0$, using that $\eta(t)=0$ for $t\in (T-\delta, T]$ from \eqref{eq: pmp1}-\eqref{eq: pmp2}, we would have that $\lambda_1(t)=\lambda_2(t)=0$ for all $t\in [T-\delta, T]$  contradicting the end time conditions \eqref{eq: pmp3}-\eqref{eq: pmp4}.

In case the trajectory crosses the semiline $\lambda_1=0, \lambda_2<0$, we call $\tilde{t}\in [0,T)$ such that $\lambda_1(\tilde{t})=0$. From  \eqref{eq: phiprima2} we would have that $\phi$ has a local minimum on $\tilde{t}$.

Also, on the region $\lambda_2\le \lambda_1 \le 0$, from \eqref{eq: phiprima2}, $\phi$ is zero or a 
strictly decreasing function and from lemma \ref{le: extremal restringido tiene arco frontera} there exists a boundary arc in this region.

For the aforementioned and \eqref{eq: phiprima2} the generalised continuous function $\phi$ 
has the following structure (see Figure \ref{fig: phi_y_sigma})

\begin{equation} \label{eq: phi_tramos}
\phi(t)= \left\{ \begin{array}{ll}
\text{positive and decreasing} & \text{ for  } 0\le t \le t_1 \\
0& \text{ for  } t_1\le t \le t_2 \\
\text{negative and decreasing} & \text{ for  } t_2\le t \le \tilde{t} \\
\text{negative and increasing}&  \text{ for  } \tilde{t} \le t \le t_3 \\
\text{positive and increasing}&  \text{ for  } t_{3}\le t \le T \\
\end{array}\right.
\end{equation}
where $0\le t_1 < t_2 \le \tilde{t} \le t_3 \le T$ and $t_1,t_2 \in \{t\in [0,T]:  \lambda_2(t)\le \lambda_1(t)\le 0\}$. Note that if  $\phi(T)<0$ (for instance when $ \beta =0$) then $t_3=T$.
The conclusion follows  from Lemma \ref{le: optimo es bb sing}. 

\begin{figure}[H]
\begin{center}
\includegraphics{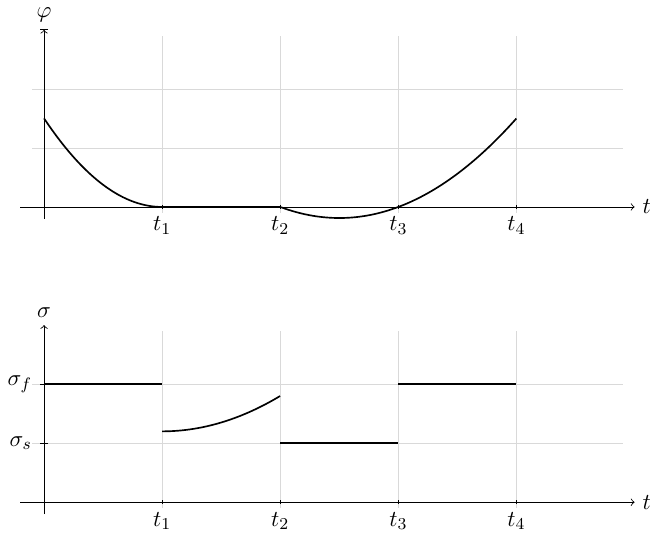}
\caption{Function $\phi$ and the corresponding $\sigma$ given by \eqref{eq: sigma opt gral}.}
\label{fig: phi_y_sigma}
\end{center}
\end{figure}
\end{proof}


\section{Characterisation of the optimal control}
\label{se: caracterization_optimal}

In this section we will give the main result (Theorem \ref{te: control_optimo_sigma1gral_rest_v2}) of this article.  We recall  that under the hypothesis of Lemma \ref{le: dos saltos} and using the notation $t_3= t_2 +\mu$ with $\mu\in[0,T-t_2]$, 
 if $\sigma$ is an optimal control, it has the form 
\begin{equation} \label{eq: sigma opt gral_kappa0}
\sigma[t_1,t_2,\mu](t)= \left\{ \begin{array}{ll}
\sigma_{f} & \text{ for  } 0\le t \le t_1 \\
\sigma_b(t)=\frac{1}{x(t)}& \text{ for  } t_1 < t \le t_2 \\
\sigma_{s} & \text{ for  } t_2 < t \le  t_2 +\mu \\
\sigma_{f} & \text{ for  } t_2+\mu < t \le T 
\end{array}\right. .
\end{equation}
In order to determine the optimal solution we need to establish the times $t_1^*,t_2^*,\mu^*$ that makes the control optimal.

For $t_1,t_2,\mu$ let $x[t_1,t_2,\mu]$ be the associated state, solution of equation 
\eqref{eq: SIR_iso_x} with $\sigma=\sigma[t_1,t_2,\mu]$. 
With the aim of analysing for which $t_1,t_2,\mu$  the integral restriction \eqref{eq: rest_int} is satisfied, 
we compute the integral of $\sigma[t_1,t_2,\mu]$ on $[0,T]$ and clearing the variable $\mu$ 
we obtain a linked between the parameters $t_1$, $t_2$ and $\mu$ given by
\begin{equation} \label{eq: mu_t1_t2}
\mu \le F[t_1](t_2).
\end{equation}
where $F[t_1]:[t_1,T] \to \mathbb{R}$ is a function of $t_2$ defined by:
\begin{equation} \label{eq: Ft1t2}
F[t_1](t_2):=\tau +  \frac{\sigma_f ( t_2 - t_1)}{\sigma_s-\sigma_f}
+  \frac{1}{\gamma K(\sigma_s-\sigma_f )} \ln \Big({1 - \frac{\gamma K (t_2-t_1)}{ x(t_1)} }\Big).
\end{equation}
Note that since $x[t_1,t_2,\mu](t_2)=x(t_1)-\gamma K (t_2-t_1)$ we have that the argument of the logarithm is positive showing that $F[t_1]$ is well defined.

Thus, the admissible controls $\sigma[t_1,t_2,\mu]$ given by \eqref{eq: sigma opt gral_kappa0} are characterised by the condition $(t_1,t_2,\mu)\in {\mathcal R}$ where
\begin{equation}
 \label{def: R}
{\mathcal R}=\left\{ (t_1,t_2,\mu)\in \mathbb{R}^3: 0\le t_1\le t_2, \,  0\le \mu \le F[t_1](t_2), \, \mu+t_2\le T)\right\}.
\end{equation}

In what follows, we will analyse {\bf Case 2} (see the previous section), that is, we will assume there exists $t\in [0,T]$ such that $y^0(t)>K$. In this case we will begin by proving that the time $t_1^*$ that makes the control $\sigma[t_1^*,t_2,\mu]$ optimal for some $t_2, \mu$ is given by $t_1^*=t_b$ (see Lemma \ref{le: t1*_es_tb}). Latter we will study the region ${\mathcal R}$ (see equation \eqref{eq: regionR}) and restrict the problem of finding $(t_2^*,\mu^*)$ for which $\sigma[t_1^*,t_2^*,\mu^*]$ is optimal.

\begin{lemma} \label{le: t1*_es_tb}
If {$\displaystyle \max_{[0,T]} y^0(t)>K$} and any contact or exit point $s$ of the constraint extremal satisfies $x^*(s)> \frac{1}{\sigma_f}$, then the time $t_1^*=t_b$ with $t_b$ defined in \eqref{eq: tb} makes the control $\sigma[t_1^*,t_2,\mu]$ optimal for some $t_2, \mu$ such that $(t_1^*,t_2,\mu) \in {\mathcal R}$.
\end{lemma}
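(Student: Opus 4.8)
The plan is to show that the first phase of no intervention in an optimal control of \eqref{eq: SIR_iso} must end exactly when the trajectory first hits the constraint boundary $y=K$. I would argue by contradiction, supposing there is an optimal control of the form \eqref{eq: sigma opt gral_kappa0} with switching time $t_1$ for the exit from the free phase, and showing that $t_1=t_b$. By Lemma \ref{le: dos saltos} the optimal $\sigma$ has the structure \eqref{eq: sigma opt gral} with a boundary arc on $[t_1,t_2]$ on which $y^*\equiv K$; in particular $y^*(t_1)=K$. Since on $[0,t_1]$ the control is $\sigma_f$, the state trajectory $(x^*,y^*)$ on $[0,t_1]$ coincides with the uncontrolled-with-$\sigma_f$ trajectory starting from $(x_0,y_0)$, which is precisely the trajectory $(x^0,y^0)$ of Theorem \ref{te: control_optimo_sigma1gral} restricted to an initial interval where it too equals the $\sigma_f$ solution (recall that in every case of Theorem \ref{te: control_optimo_sigma1gral} the optimal unconstrained control starts with $\sigma_f$ on $[0,t^0]$, and by Remark \ref{eq: ymenork} $y^0$ is increasing up to $t^0$). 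Thus on $[0,\min(t_1,t^0)]$ we have $y^*\equiv y^0$.

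The core of the argument is then a monotonicity/first-hitting comparison. Since $y^0(t)>K$ for some $t$ and $y^0$ attains its maximum at $t^0$, the first-hitting time $t_b=\min\{t\in[0,t^0]:y^0(t)=K\}$ is well defined with $0<t_b<t^0$, and $y^0(t)<K$ for $t\in[0,t_b)$. I would first rule out $t_1<t_b$: on $[0,t_1]$ the control is $\sigma_f$, so $y^*\equiv y^0$ there, and since $t_1<t_b<t^0$ we get $y^*(t_1)=y^0(t_1)<K$, contradicting $y^*(t_1)=K$ (the boundary arc starts at $t_1$). Next I would rule out $t_1>t_b$. Here one cannot immediately equate $y^*$ with $y^0$ past $t_b$, but on $[0,t_1]$ the control is still $\sigma_f$ so $y^*\equiv y^0$ on all of $[0,t_1]$ as long as $t_1\le t^0$; then $y^*(t_b)=y^0(t_b)=K$ with $t_b<t_1$, which means the state constraint is already touched strictly before the boundary arc begins — and by continuity of $y^*$ together with the fact that $y^0$ (hence $y^*$) is strictly increasing just after $t_b$ (since $x^0(t_b)>1/\sigma_f$, as $t_b<t^0\le t_m$), we would have $y^*(t)>K$ for $t$ slightly larger than $t_b$, violating the constraint \eqref{eq: restr_yt}; the remaining possibility $t_1>t^0$ is excluded because $y^0$ is already above $K$ somewhere in $(t_b,t^0)$, so $y^*$ would be infeasible well before $t^0<t_1$. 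Hence $t_1=t_b$ in all cases, i.e.\ $t_1^*=t_b$.

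Finally I would check the feasibility claim $(t_1^*,t_2,\mu)\in{\mathcal R}$: with $t_1^*=t_b$ fixed, the optimal $t_2,\mu$ produced by Lemma \ref{le: dos saltos} automatically satisfy $0\le t_1^*\le t_2\le t_2+\mu\le T$ and the integral constraint \eqref{eq: rest_int}, which by the computation leading to \eqref{eq: mu_t1_t2}--\eqref{eq: Ft1t2} is equivalent to $\mu\le F[t_1^*](t_2)$; thus the triple lies in ${\mathcal R}$ as defined in \eqref{def: R}.

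The main obstacle I anticipate is the bookkeeping in the second case ($t_1>t_b$): one must carefully use that the free-phase control $\sigma_f$ on $[0,t_1]$ forces $y^*=y^0$ on that whole interval, combined with the sign information $x^0(t_b)>1/\sigma_f$ (so $y^0$ is strictly increasing at $t_b$) to get a genuine violation of $y^*\le K$ rather than a mere contact point. The hypothesis that every contact or exit point $s$ satisfies $x^*(s)>1/\sigma_f$ is what makes this clean, since it prevents the degenerate situation in which the trajectory grazes $y=K$ at its own maximum without crossing.
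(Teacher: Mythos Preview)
Your proposal is correct and follows essentially the same approach as the paper's proof. The paper's version is much terser: it first observes that $t_1^*\le t_b$ from the feasibility constraint \eqref{eq: restr_yt} (your case $t_1>t_b$), and then rules out $t_1^*<t_b$ by noting that in that situation no boundary arc could exist, contradicting Lemma~\ref{le: extremal restringido tiene arco frontera}---which is equivalent to your direct use of $y^*(t_1)=K$ from Lemma~\ref{le: dos saltos} to force $t_1\ge t_b$.
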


\begin{proof}
From \eqref{eq: sigma opt gral_kappa0} and the restriction  \eqref{eq: restr_yt}, it is clear that $t_1^* \le  t_{b}$. If the latter inequality is strict, there cannot be a boundary arc contradicting Lemma  \ref{le: extremal restringido tiene arco frontera}.\end{proof}

From now on, we will assume that $t_1^*$ is fixed and given by $t_1^*=t_{b}$. It remains to characterise $t_2^*$ and $\mu^*$. In order to do that, we begin by restricting possible $t_2>t_1^*$ for which $\sigma[t_1^*,t_2,\mu]$ is admissible. 

\begin{remark} \label{re: x2_y_sigmaf}
 In order for $\sigma[t_1^*,t_2,\mu]$ to be an admissible control, the boundary control must satisfy $\sigma_b(r)=1/x[t_1^*,t_2,\mu](r) \in (\sigma_s,\sigma_f)$ for all $r\in (t_1^*,t_2)$. 
Then, since $x[t_1^*,t_2,\mu]$ is decreasing the condition $x[t_1^*,t_2,\mu](t_2)\ge 1/\sigma_f$ have to be fulfilled and in particular  $x[t_1^*,t_2,\mu](t)> 1/\sigma_f$ for all $t<t_2$. 
Using that for $t\in (t_1^*,t_2)$,  $y[t_1^*,t_2,\mu](t)=K$ and $x[t_1^*,t_2,\mu](t)=-\gamma K (t-t_1^*)+x[t_1^*,t_2,\mu](t_1^*)$, we obtain that the condition $x[t_1^*,t_2,\mu](t_2)\ge 1/\sigma_f$ is satisfied if and only if 
$t_2\le t_1^*+ \frac{x[t_1^*,t_2,\mu](t_1^*)-1/\sigma_f}{K\gamma}$.
\end{remark}

Then we compute, if there is any, the time $t$ where the trajectory given by equations \eqref{eq: SIR_iso_x}-\eqref{eq: SIR_iso_y} with $\sigma=\sigma[t_1^*,T,0]$
satisfies that $x[t_1^*,T,0](t)=1/\sigma_f$ and define
\begin{align}
 \label{eq: tm}
 t_{m}
 & = \max\left\{t\in [t_1^*,T]: x[t_1^*,T,0](t) \geq 1/\sigma_f \right\}
\end{align}
Note that if $t_m<T$, then $t_m=t_{1}^*+ \frac{x[t_1^*,T,0](t_1^*)-1/\sigma_f}{K\gamma}$. Also from \eqref{def: R}, since $0 \le \mu \le F[t_1^*](t_2)$, we define, if there is any, the zero of $F[t_1^*]$ given by  \eqref{eq: Ft1t2}
\begin{equation}
 \label{eq: tf}
 t_f=\max\left\{t\in [t_1^*,t_{m}]: F[t_{1}^*](t) \geq 0 \right\}
\end{equation}
and so we consider the restriction $F[t_{1}^*]:[t_{1}^*,t_{f}] \to \mathbb{R}$.
Moreover, in what follows we analyse how conditions involving $\mu$ and $t_2$ on ${\mathcal R}$ in \eqref{def: R} are related.
Using that $F[t_1^*](t_2)+t_2$ is increasing as a function of $t_2$ on $[t_{1}^{*},t_{f}]$ (see Lemma \ref{le: H_crece} in Supplement) we can define 
\begin{align} \label{def: tc}
t_c\in[t_1^*,t_{f}] \text{ such that } &F[t_{1}^{*}](t_{2}) < T-t_2 \,\forall t_2\in [t_1^*,t_{c}) \nonumber \\
 \text{ and } &F[t_{1}^{*}](t_{2}) > T-t_2 \, \forall t_2\in (t_{c},t_{f}].
 \end{align}
 
Note that if $F[t_{1}^{*}](t_{2}) > T-t_2$ for all $t_2\in (t_1^*,t_{f}]$, then $t_{c}=t_{1}^*$ and if $F[t_{1}^{*}](t_{2}) <T-t_2$ for all $t_2\in [t_1^*,t_{f})$, then $t_{c}=t_{f}$.

From the definition of $t_{c}$ we can define the continuous function $G:[t_{1}^{*},t_{f}] \to \R$
\begin{equation} \label{eq: Gpartida}
G(t_2)= \min\{ F[t_{1}^{*}](t_2), T-t_2\},
\end{equation}
which satisfies $G(t_{c})=F[t_1^*](t_{c})=T-t_{c}$ if $t_{c}\in (t_1^*,t_{f})$. Note that if $t_{c}=t_{1}^{*}$, $G(t_2)=T-t_2$ and if $t_{c}=t_{f}$, $G(t_2)=F[t_{1}^{*}](t_2)$. 
Then for $t_{1}^{*}$ fixed, the region ${\mathcal R}$ defined in \eqref{def: R} can be set as follows (see Figure \ref{fig: regionesR})
\begin{align}
 {\mathcal R}[t_{1}^{*}]&=\left\{ (t_2,\mu)\in \mathbb{R}^2:  t_{1}^{*}\le t_2\le t_{f}, \,  0\le \mu \le F[t_{1}^{*}](t_2), \, \mu+t_2\le T)\right\} \nonumber \\
 \label{eq: regionR} &=\left\{ (t_2,\mu)\in \mathbb{R}^2:  t_2\in [t_{1}^{*},t_f], \,  0\le \mu \le G(t_2) \right\}.
\end{align}
\begin{figure}[H]
\begin{center}
\includegraphics[width=4.2cm]{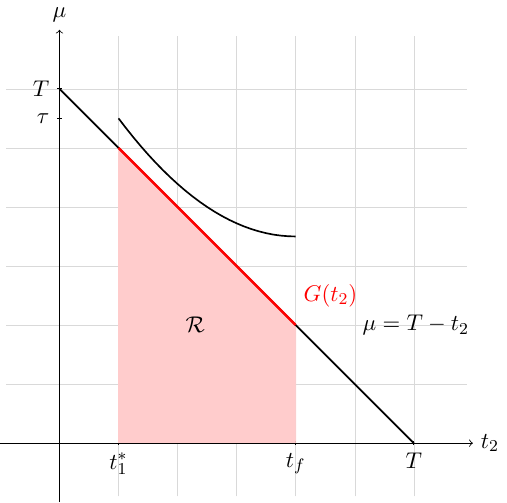}
\includegraphics[width=4.2cm]{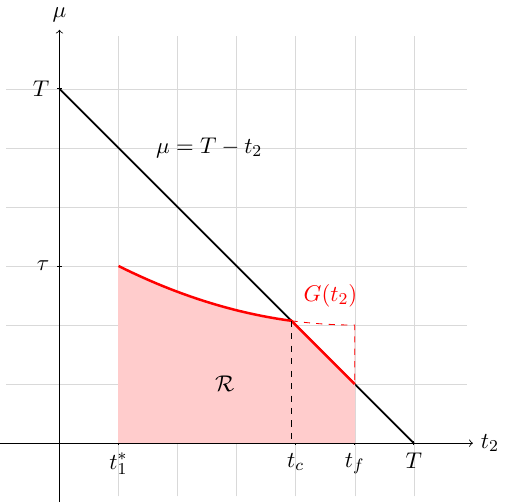}
\includegraphics[width=4.2cm]{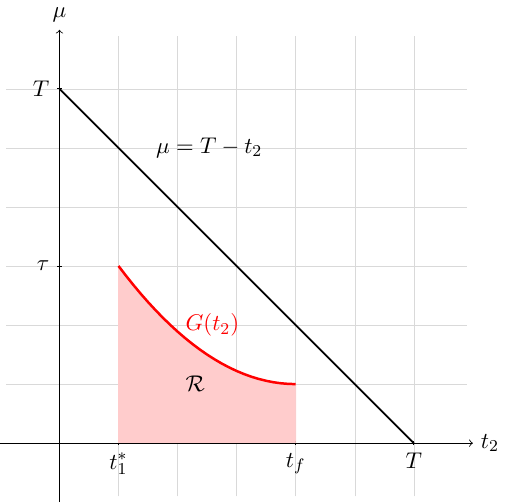}
\caption{{\bf Possible regions ${\mathcal R}$}. For the region on the left,  $F[t_{1}^{*}](t_{2}) > T-t_2 \,\forall t_2 \in [t_1^*,t_f]$, that is $t_c=t_1^*$. For the region in the center, there exists $t_c\in (t_1^*,t_f)$ such that $F[t_{1}^{*}](t_{c}) =T-t_c$. Finally, for the region on the right, $F[t_{1}^{*}](t_{2}) < T-t_2 \,\forall t_2 \in [t_1^*,t_f]$, that is $t_c=t_f$. }
\label{fig: regionesR}
\end{center}
\end{figure}

 Thus, we need to determine the maximum of the function
 \begin{equation} \label{eq: Jt2}
J(t_2,\mu)=x_{\infty}(x[{t_1^*,t_2,\mu}](T),y[{t_1^*,t_2,\mu}](T),\sigma_f) 
\end{equation}
with $(t_2,\mu)\in {\mathcal R}[t_1^*]$.
In order to do that, we begin by computing the partial derivative of $J(t_2,\mu)$ with respect to $\mu$ and obtain
 (see equation \eqref{eq: deriv_J_resp_mu} 
 in the supplement)

\begin{align} \label{eq: deriv_J_resp_mu_cuerpo}
\frac{\partial J}{\partial \mu}(t_2, \mu) 
&= 
\gamma  y_3 \frac{ x_{\infty}[t_1^*,t_2,\mu] }{1- \sigma_f x_{\infty}[t_1^*,t_2,\mu]} (\sigma_f - \sigma_s) >0, 
 \end{align}
where $x_{\infty}[t_1^*,t_2,\mu]=x_{\infty}(x[t_1^*,t_2,\mu)](T),y[t_1^*,t_2,\mu](T),\sigma_f)$. 

From \eqref{eq: deriv_J_resp_mu_cuerpo} we have that the optimal pair $(t_2^*,\mu^*)$ is attained at the superior border of the region ${\mathcal R}[t_1^*]$, that is, $\mu^*=G(t_2^*)$ with $t_2^*\in [t_1^*,t_{f}]$ and $G$ defined in \eqref{eq: Gpartida}. Thus we consider $J$ restricted to that border and define
\begin{align} \label{eq: Jtilde}
\tilde{J}(t_2):=J(t_2,G(t_2)).
\end{align}

From \eqref{eq: deriv_J_resp_t2_parte1} and  \eqref{eq: deriv_J_resp_t2_parte2}  in the Supplement we obtain
 \begin{equation} \label{eq: Jtilde_primat2}
\tilde{J}^{\prime} (t_2)= \frac{\gamma^2 y_3 K (1-\sigma_s x_2) x_{\infty}[t_1^*,t_2,G(t_2)] }{x_2 (1-\sigma_f x_{\infty}[t_1^*,t_2,G(t_2)])}. \left\{ \begin{array}{lll}
  w_b(t_2) & \text{ for } & t_2\in [t_1^*,t_{c}) \\
w_b(t_2) -\frac{1}{\gamma K}  & \text{ for } & t_2\in (t_{c},t_f]
\end{array}\right.
\end{equation}
where $x_2=x[t_1^*,t_2,G(t_2)](t_2)$, $y_3=y[t_1^*,t_2,G(t_2)](t_2+G(t_2))$ and $w_b:[t_1^*,t_{f}]\to\mathbb{R}$ is the continuous function 
 \begin{equation} \label{eq: wb}
w_b(t_2)= \int_{t_2}^{t_2+G(t_2)} \frac{\sigma_f x[t_1^*,t_2,G(t_2)](r)-1}{y[t_1^*,t_2,G(t_2)](r)} dr. \end{equation}


In what follows we analyse some properties of  the function $w_b$ on the interval $[t_1^*,t_{c}]$.
\begin{remark} \label{re: Gdecrece} 
Since $x[t_1^*,t_2,F[t_1^{*}](t_2)] (t_2+F[t_1^*](t_2))$ is decreasing as a function of $t_2$  defined on the interval $[t_1^*,t_{c}]$ (see Lemma \ref{le: varphi_decrece} in supplement),  we can define $s_0\in [t_1^*,t_{c}]$ such that
$x[t_1^*,t_2,F[t_1^*](t_2)](t_2+F[t_1^*](t_2))>1/\sigma_f$ for all $t_2\in[t_1^*,s_0)$ and $x[t_1^*,t_2,F[t_1^*](t_2)](t_2+F[t_1^*](t_2))<1/\sigma_f$ for all $t_2\in (s_0,t_{c}]$.
Note that if $x[t_1^*,t_2,F[t_1^*](t_2)](t_2+F[t_1^*](t_2))<1/\sigma_f$ for all $t_2\in (t_1^*,t_{c}]$, then $s_0=t_{1}^*$ and if $x[t_1^*,t_2,F[t_1^*](t_2)](t_2+F[t_1^*](t_2))>1/\sigma_f$ for all $t_2\in [t_1^*,t_{c})$, then $s_0=t_{c}$ .
 Then we can conclude that for $t_2 \in[t_1^*,s_0)$,  $x[t_1^*,t_2,F[t_1^*](t_2)](t_2+F[t_1^*](t_2))>1/\sigma_f$ and consequently $x[t_1^*,t_2,F[t_1^*](t_2)](r)\ge 1/\sigma_f$ for all $r\in (t_2,t_2+F[t_1^*](t_2))$. See Figure \ref{fig: phase_diagram_front}.
\begin{figure}[h]
  \begin{center}
    \includegraphics[width=9cm]{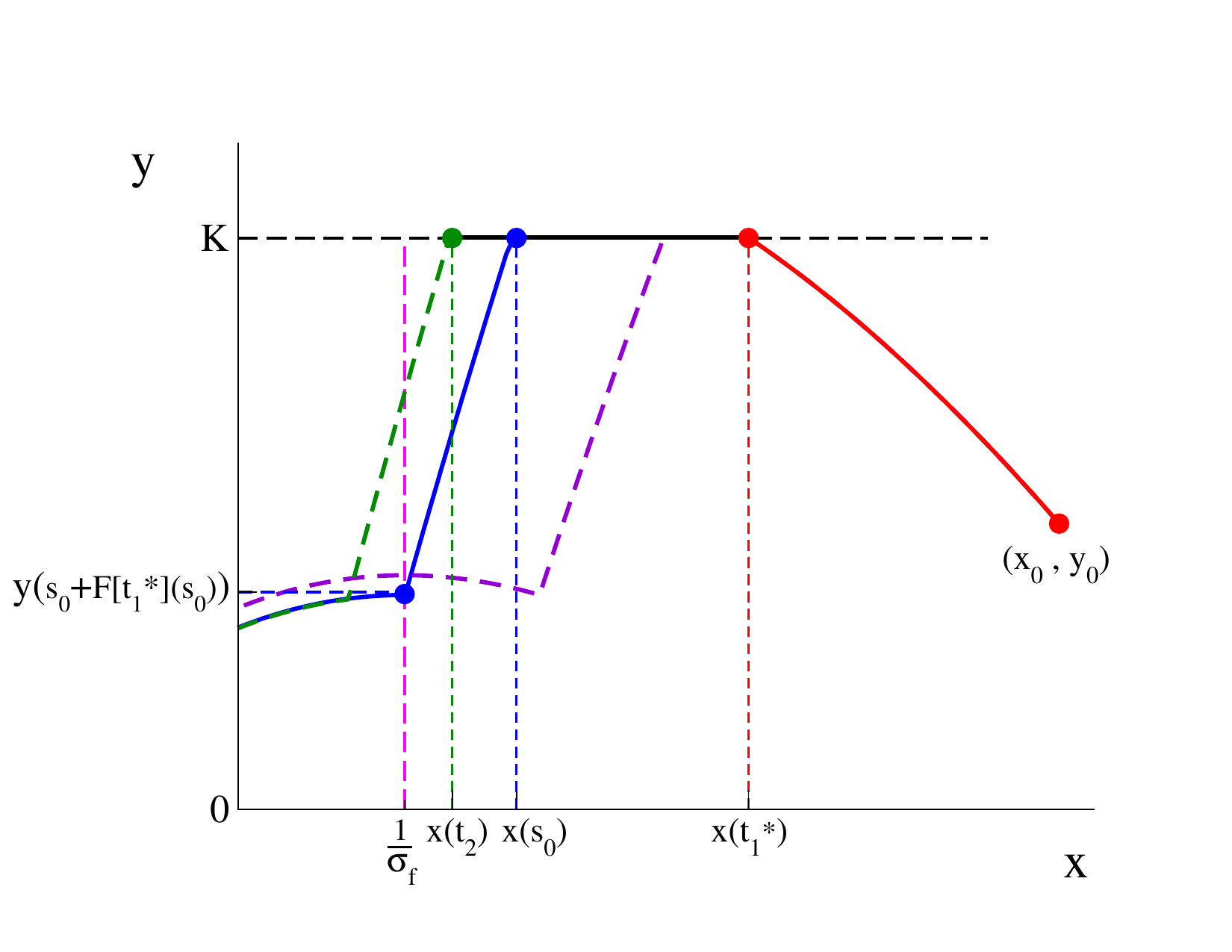}
    \caption{Phase diagram of the trajectory $(x[t_1^*,t_2,F[t_1^{*}](t_2)],y[t_1^*,t_2,F[t_1^{*}](t_2)])(r)$ in red for $r\in [t_0,t_1^*]$ and in black for $r\in [t_1^*,t_2]$.  The solid blue line for $r\in [s_0,s_0+F[t_1^{*}](s_0)]$ corresponds to $t_2=s_0$, while the dashed green and violet lines for $r\in [t_2,t_2+F[t_1^{*}](t_2)]$ correspond to $t_2> s_0$ and $t_2<s_0$ respectively.}
\label{fig: phase_diagram_front}
\end{center}
\end{figure}

 \end{remark}


\begin{lemma} \label{le: wbdecrece}
For $s_0 \in [t_1^*,t_{c}]$ given by remark \ref{re: Gdecrece},  $w_{b}(t_2)>0$ for all $t_2\in [t_1^*,s_0)$ and $w_b$ is a strictly decreasing function on $(s_0,t_{c})$ concluding that $w_b$ changes sign at most once on $[t_1^*,t_{c}]$. 
\end{lemma}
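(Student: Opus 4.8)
\textbf{Proof plan for Lemma \ref{le: wbdecrece}.}

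The plan is to split the interval $[t_1^*,t_c]$ at the threshold $s_0$ supplied by Remark \ref{re: Gdecrece} and treat the two pieces by different arguments. On $[t_1^*,s_0)$, recall that by Remark \ref{re: Gdecrece} the trajectory along the frontier control satisfies $x[t_1^*,t_2,F[t_1^*](t_2)](r)\ge 1/\sigma_f$ for every $r\in(t_2,t_2+F[t_1^*](t_2))$ — moreover strictly $>1/\sigma_f$ on a set of positive measure since the endpoint value exceeds $1/\sigma_f$. Since $G(t_2)=F[t_1^*](t_2)$ on this subinterval (because $t_2<s_0\le t_c$, so $F[t_1^*](t_2)\le T-t_2$), the integrand $(\sigma_f x-1)/y$ in the definition \eqref{eq: wb} of $w_b$ is nonnegative and not identically zero, hence $w_b(t_2)>0$. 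This takes care of the first assertion with essentially no computation.

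For the second assertion, that $w_b$ is strictly decreasing on $(s_0,t_c)$, I would compute $w_b'$. First note that on $(s_0,t_c)$ one still has $G=F[t_1^*]$, so $w_b(t_2)=\int_{t_2}^{t_2+F[t_1^*](t_2)}\frac{\sigma_f x[t_1^*,t_2,F[t_1^*](t_2)](r)-1}{y[t_1^*,t_2,F[t_1^*](t_2)](r)}\,dr$. Here I would use the key structural fact observed in \eqref{eq: Jtilde_primat2} and the surrounding analysis: the quantity $w_b(t_2)$ is, up to the strictly positive prefactor $\frac{\gamma^2 y_3 K(1-\sigma_s x_2)x_\infty}{x_2(1-\sigma_f x_\infty)}$, the derivative $\tilde J'(t_2)$ on the branch $t_2\in[t_1^*,t_c)$. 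Rather than differentiating $w_b$ directly through the $t_2$-dependent limits and the $t_2$-dependent trajectory — which is delicate because $F[t_1^*](t_2)$ enters both the upper limit and the control length — I would instead exploit the relation between $w_b$ and the switching function $w$ from \eqref{eq: omega}: the integrand is exactly the one appearing in $w$, and the trajectory on $(t_2,t_2+F[t_1^*](t_2))$ starting from the boundary point $(x_2,K)$ with the bang-bang tail $\sigma_s$ then $\sigma_f$ is governed by the SIR equations, so $w_b(t_2)$ equals (a value of) the function $w$ evaluated along a reparametrised family. The monotonicity of such $w$-type functions is precisely what Theorem \ref{te: control_optimo_sigma1gral} and its proof in \cite{Balderrama22} establish; I would either cite the analogous monotonicity computation there or redo the short ODE-differentiation argument: differentiate the SIR system in the parameter, use the Lambert-$W$/conserved-quantity identity $\rho(x,y,\sigma)=xe^{-\sigma(x+y)}$ together with \eqref{eq: deriv_xinfty}, and show the sign of $w_b'$ is controlled by $x_2-1/\sigma_f$ and the fact that along this part of the frontier $x[t_1^*,t_2,F[t_1^*](t_2)](t_2+F[t_1^*](t_2))<1/\sigma_f$ (the defining property of $t_2>s_0$).

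The conclusion — $w_b$ changes sign at most once on $[t_1^*,t_c]$ — then follows immediately: $w_b>0$ throughout $[t_1^*,s_0)$, and on $(s_0,t_c)$ it is strictly decreasing, so it can vanish at most once and once negative stays negative; continuity of $w_b$ (stated in the line before \eqref{eq: wb}) closes the gap at $s_0$. The main obstacle I anticipate is the differentiation of $w_b$ on $(s_0,t_c)$: one must carefully account for the three sources of $t_2$-dependence (the lower limit of integration, the upper limit $t_2+F[t_1^*](t_2)$, and the initial condition $(x_2,K)=(x(t_1^*)-\gamma K(t_2-t_1^*),K)$ feeding the trajectory), and show that the boundary terms from Leibniz differentiation combine with the interior sensitivity integral to leave a sign that is governed solely by the sign of $\sigma_f x-1$ along the arc — which is negative on $(s_0,t_c)$ by Remark \ref{re: Gdecrece}. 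I expect this to reduce, after using $x'=-\gamma K$ on the boundary portion and the explicit form \eqref{eq: Ft1t2} of $F[t_1^*]$ (so that $\frac{d}{dt_2}F[t_1^*](t_2)$ has a clean expression), to showing a single integrand is negative, mirroring the computation behind \eqref{eq: Jtilde_primat2}.
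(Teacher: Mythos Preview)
Your treatment of the first assertion is correct and matches the paper exactly: on $[t_1^*,s_0)$ one has $G=F[t_1^*]$ and, by Remark~\ref{re: Gdecrece}, $x[t_1^*,t_2,F[t_1^*](t_2)](r)\ge 1/\sigma_f$ along the whole arc, so the integrand in \eqref{eq: wb} is nonnegative and $w_b(t_2)>0$.

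The second assertion, however, has a genuine gap. You write that after Leibniz differentiation the sign of $w_b'$ will be ``governed solely by the sign of $\sigma_f x-1$ along the arc --- which is negative on $(s_0,t_c)$ by Remark~\ref{re: Gdecrece}.'' This is false. Remark~\ref{re: Gdecrece} only gives $x[t_1^*,t_2,F[t_1^*](t_2)](t_2+F[t_1^*](t_2))<1/\sigma_f$ at the \emph{terminal} point of the arc when $t_2>s_0$; at the \emph{initial} point one has $x_2=x[t_1^*,t_2,F[t_1^*](t_2)](t_2)\ge 1/\sigma_f$ by Remark~\ref{re: x2_y_sigmaf} (this is exactly the admissibility condition for the boundary control). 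Hence $\sigma_f x-1$ changes sign along every such arc, the integrand of $w_b$ itself has mixed sign, and the Leibniz computation does \emph{not} collapse to a single-signed integrand. Your appeal to the monotonicity of $w$ in \cite{Balderrama22} does not rescue this either: there the quarantine duration is fixed (or equals $T-t$), whereas here $t_2$, the initial datum $(x_2,K)$, and the length $F[t_1^*](t_2)$ are all coupled through \eqref{eq: Ft1t2}, so that argument does not transfer.

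What the paper actually does is compute $w_b'$ explicitly and control the competing terms via three auxiliary functions along $s\in(t_2,t_2+F[t_1^*](t_2))$:
\begin{align*}
g[t_2](s)&=\sigma_f\sigma_s xy+(\sigma_f x-1)(\sigma_s x-1),\\
f[t_2](s)&=\dfrac{\sigma_f x-1}{y},\\
i[t_2](s)&=\dfrac{x}{y}(\sigma_f x+\sigma_f y-1)+\gamma\, g[t_2](s)\int_{t_2}^{s}\dfrac{x}{y}\,dr,
\end{align*}
with $x=x[t_2](s)$, $y=y[t_2](s)$. One checks that $g'$ and $i'$ are both proportional to $-(\sigma_f x+\sigma_f y-1)$; a case split on whether $\sigma_f(x+y)-1$ stays positive or hits zero on the arc shows $g>0$ and $i>0$ throughout. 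Then $f'=-\gamma g/y<0$, and the expression
\[
w_b'(t_2)=f[t_2](t_2+F[t_1^*](t_2))-f[t_2](t_2)+\gamma y_2\Big(\sigma_s-\tfrac{1}{x_2}\Big)\int_{t_2}^{t_2+F[t_1^*](t_2)}\frac{i[t_2](s)}{y[t_2](s)}\,ds
\]
is a sum of two negative terms (using $\sigma_s<1/x_2$). Your plan identifies the right ingredients---Leibniz rule, sensitivity of the flow, the role of $s_0$---but the decisive step, namely proving $g>0$ and $i>0$, is missing, and without it the sign of $w_b'$ cannot be concluded.
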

\begin{proof}
For $t_2\in [t_1^*,t_{c}]$, from \eqref{def: tc} and \eqref{eq: wb} we have that 
\begin{align} \label{eq: z}
w_{b}(t_2)=\int_{t_2}^{t_2+F[t_1^*](t_2)} \frac{\sigma_f x[t_1^*,t_2,F[t_1^*](t_2)](r)-1}{y[t_1^*,t_2,F[t_1^*](t_2)](r)} dr.
\end{align}
From Remark \ref{re: Gdecrece} $w_b(t_2)>0$ for all $t_2\in [t_1^*,s_0)$.
Since $t_1^*$ is fixed, given $t_2\in [t_1^*,t_{c}]$, the duration of the strict quarantine is given by $\mu={F[t_1^*](t_2)}$. For simplicity of notation in this proof we neglect the parameters $t_1^*$ and $F[t_1^*](t_2)$ from solutions $x[t_1^*,t_2,F[t_1^*](t_2)]$ and $y[t_1^*,t_2,F[t_1^*](t_2)]$ and write $x[t_2],y[t_2]$.
Given $t_2\in (s_0,t_{c})$, we define the auxiliary functions for $s\in(t_2,t_2+F[t_1^*](t_2))$
\begin{subequations}  \label{eq: func_aux}
\begin{align}
g[t_2](s)&=\sigma_f \sigma_s x[t_2](s) y[t_2](s)+(\sigma_f x[t_2](s)-1)(\sigma_s x[t_2](s)-1),  \label{eq: g} \\
f[t_2](s)&=\frac{\sigma_f x[t_2](s)-1}{y[t_2](s)}, \label{eq: f}\\
i[t_2](s)&=\frac{x[t_2](s)}{y[t_2](s)}(\sigma_f x[t_2](s)+\sigma_f y[t_2](s)-1)+\gamma  g[t_2](s) \int_t^s \frac{x[t_2](r)}{y[t_2](r)}dr. \label{eq: i}
\end{align}
\end{subequations}

By computing the derivatives for $s\in(t_2,t_2+F[t_1^*](t_2))$ we obtain
\begin{subequations}  \label{eq: deriv_func_aux}
{\small \begin{align}
\frac{d}{ds}g[t_2](s)&=-\gamma \sigma_s^2 x[t_2](s) y[t_2](s)(\sigma_f x[t_2](s)+\sigma_f y[t_2](s)-1), \label{eq: derivg} \\
\frac{d}{ds}f[t_2](s)&=-\gamma \frac{g[t_2](s)}{y[t_2](s)}, \label{eq: derivf}\\
\frac{d}{ds}i[t_2](s)&=-\gamma \sigma_s x[t_2](s)\left(\gamma \sigma_s   y[t_2](s) \int_t^s \frac{x[t_2](r)}{y[t_2](r)}dr +1\right)(\sigma_f x[t_2](s)+\sigma_f y[t_2](s)-1),\label{eq: derivi}
\end{align}}
\end{subequations}
and from \eqref{eq: deriv_x_y_t2_t2_t2masmu} and \eqref{eq: u_ti}-\eqref{eq: v_ti} with $i=2$ we have that
\begin{align}  \label{eq: z_prima2}
w_{b}^{\prime}(t_2)=&f[t_2]({t_2+F[t_1^*](t_2)}) - f[t_2](t_2) \\
&+ \gamma y_2 (\sigma_s -\frac{1}{x_2})  \int_{t_2}^{t_2+F[t_1^*](t_2)} \frac{x[t_2](s)}{ y[t_2]^2(s)}(\sigma_f x[t_2](s)+\sigma_f y[t_2](s)-1)ds \nonumber\\
&+ \gamma y_2 (\sigma_s -\frac{1}{x_2}) \int_{t_2}^{t_2+F[t_1^*](t_2)}  \gamma \left( \int_{t}^s \frac{x[t_2](r)}{y[t_2](r)} dr\right)  \frac{g[t_2](s)}{y[t_2](s)} ds \nonumber \\
= &f[t_2](t_2+F[t_1^*](t_2)) - f[t_2](t_2)+  \gamma y_2 (\sigma_s -\frac{1}{x_2}) \int_{t_2}^{t_2+F[t_1^*](t_2)}  \frac{i[t_2](s)}{ y[t_2](s)} ds  \nonumber
\end{align}

Note that if $g[t_2]$ is a positive function, then from \eqref{eq: derivf} $f[t_2]$ is decreasing. In addition, if $i[t_2]$ is also a positive function, we can conclude that $w_b$ is decreasing on $[t_1^*,t_{c}]$.
In what follows we will show that, in fact, $g[t_2](s)>0$ and $i[t_2](s)>0$ for all $s\in (t_2,{t_2+F[t_1^*](t_2)})$.

Since both derivatives of $g[t_2]$ and $i[t_2]$ have the opposite sign of $(\sigma_f x[t_2](s)+\sigma_f
y[t_2](s)-1)$, we need to analyse the latter function. First, assume $\sigma_f x[t_2](s)+\sigma_f y[t_2](s)> 1$ for all $s\in (t_2,{t_2+F[t_1^*](t_2)})$, then
from  \eqref{eq: derivg}, $g[t_2]$ is a decreasing function. Moreover, from Remark \ref{re: Gdecrece} we have that $\sigma_f x[t_2](t_2+F[t_1^*](t_2))-1<0$ for $t_2\in (s_0,t_{c})$ which combined with the inequality $\sigma_s
x[t_2](t_2+F[t_1^*](t_2))-1<0$ yields $g[t_2](s)>g[t_2](t_2+F[t_1^*](t_2))>0$ for all $s\in (t_2,{t_2+F[t_1^*](t_2)})$.  Thus from \eqref{eq: i} $i[t_2](s)>0$ for all $s\in (t_2,{t_2+F[t_1^*](t_2)})$.
\\
From the fact that $x+y$ is a decreasing function and $x(t_2)+y(t_2)>1/\sigma_f$, if we assume that there exists
$s_3\in [t_2,t_2+F[t_1^*](t_2)]$ such that $\sigma_f x[t_2](s_3)+\sigma_f y[t_2](s_3)=1$, from
\eqref{eq: derivg} and \eqref{eq: derivi} 
we have that
$g[t_2]$ and $i[t_2]$ attains a global minimum 
at $s_3$. Thus $g[t_2](s)\ge g[t_2](s_3)=\sigma_f y[t_2](s_3)>0$ and $i[t_2](s)\ge i[t_2](s_3)=\gamma g[t_2](s_3)\int_{t}^{s_3} \frac{x[t_2](r)}{y[t_2](r)} dr>0$ for all $s\in [t_2,{t_2+F[t_1^*](t_2)}]$ concluding also in this case that $g[t_2]$ and $i[t_2]$ are positive functions. 

Finally, from remark \ref{re: Gdecrece} we deduce that  $w_b$ changes sign at most once on $[t_1^*, t_{c}] $.

\end{proof}

\begin{lemma} \label{le: hachedecrece}
On the interval $[t_{c},t_{f}]$ the function $(w_b(t_2)-\frac{1}{\gamma K})y[t_1^*,t_2,T-t_2](T)$ is decreasing with respect to $t_2$. Furthermore $w_b(t_f)\le 0$ and $w_b(t_1^*)>0$. \end{lemma}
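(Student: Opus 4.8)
The plan is to handle the monotonicity claim first and the two sign statements afterwards.

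For the monotonicity, I would use that on $[t_c,t_f]$ one has $G(t_2)=T-t_2$, so the strict‑quarantine phase of $\sigma[t_1^*,t_2,T-t_2]$ runs exactly on $[t_2,T]$ with $\sigma\equiv\sigma_s$ there. On that subinterval $y'=\gamma y(\sigma_s x-1)$, hence $\frac{\sigma_s x-1}{y}=-\frac1\gamma\big(\frac1y\big)'$ and therefore $\int_{t_2}^{T}\frac{\sigma_s x-1}{y}\,dr=\frac1\gamma\big(\frac1K-\frac1{y_3}\big)$, where $y_3=y[t_1^*,t_2,T-t_2](T)$. Splitting $\sigma_f x-1=(\sigma_s x-1)+(\sigma_f-\sigma_s)x$ in the definition \eqref{eq: wb} of $w_b$ then yields the identity
\[
\Big(w_b(t_2)-\frac{1}{\gamma K}\Big)\,y_3 \;=\; -\frac{1}{\gamma}\;+\;(\sigma_f-\sigma_s)\,y_3\int_{t_2}^{T}\frac{x(r)}{y(r)}\,dr ,
\]
so, since $\sigma_f>\sigma_s$, it suffices to prove that $Q(t_2):=y_3\int_{t_2}^{T}\frac{x(r)}{y(r)}\,dr$ is strictly decreasing on $(t_c,t_f)$.

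To differentiate $Q$ I would follow the scheme of the proof of Lemma~\ref{le: wbdecrece}: use the sensitivity relations for $\partial x/\partial t_2$, $\partial y/\partial t_2$ on $[t_2,T]$ from the Supplement (Eqs.~\eqref{eq: deriv_x_y_t2_t2_t2masmu} and \eqref{eq: u_ti}--\eqref{eq: v_ti} with $i=2$), noting that here $x(t_1^*)$ is fixed while $x_2=x(t_1^*)-\gamma K(t_2-t_1^*)$, so that $dx_2/dt_2=-\gamma K$ and $\frac{d}{dt_2}(t_2+G(t_2))=0$. After grouping terms with $x'/x=-\gamma\sigma_s y$ and $(x+y)'=-\gamma y$, the expression for $Q'(t_2)$ collapses to an integral whose sign is controlled by the position of $x(r)+y(r)$ relative to the thresholds $1/\sigma_f$ and $1/\sigma_s$ along the final $\sigma_s$‑arc; since $x+y$ is strictly decreasing on that arc this position varies monotonically, and I would close the argument with the same dichotomy as in Lemma~\ref{le: wbdecrece} — treating separately the case $\sigma_f(x+y)-1>0$ throughout and the case in which it vanishes at an interior point $s_3$ (where the relevant auxiliary functions attain a global minimum) — by introducing auxiliary functions analogous to $g[t_2],f[t_2],i[t_2]$ of \eqref{eq: func_aux} and showing they stay positive, which forces $Q'(t_2)<0$. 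This sign bookkeeping is the main obstacle.

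For the two remaining statements I argue at the endpoints of $[t_1^*,t_f]$. At $t_2=t_1^*$ the boundary arc is trivial, so $\sigma[t_1^*,t_1^*,G(t_1^*)]$ coincides with the three‑phase control $\sigma[t_1^*,G(t_1^*)]$ of \eqref{eq: sigma_viejo} with $G(t_1^*)=\min\{\tau,\,T-t_1^*\}$; consequently $w_b(t_1^*)=w(t_1^*)$, with $w$ as in \eqref{eq: omega} (first branch if $t_1^*\le T-\tau$, second branch otherwise). Since we are in Case~2, item~1.1 of Theorem~\ref{te: control_optimo_sigma1gral} is excluded — it would force $t^0=0$, hence $\max y^0=y^0(0)=y_0<K$ by Remark~\ref{eq: ymenork} — so we are in one of items 1.2--1.4, in each of which $w(t^0)\ge 0$; together with $t_1^*=t_b<t^0$ (Remark~\ref{eq: ymenork} and \eqref{eq: tb}) and the monotonicity of $w$ proved in \cite{Balderrama22}, this gives $w_b(t_1^*)=w(t_b)>0$. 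For $w_b(t_f)\le 0$ I split according to the definition \eqref{eq: tf} of $t_f$: if $F[t_1^*](t_f)=0$ then, since $t_f<T$, $G(t_f)=\min\{0,T-t_f\}=0$ and the integral defining $w_b(t_f)$ is over a single point, so $w_b(t_f)=0$; if instead $F[t_1^*](t_f)>0$, then maximality in \eqref{eq: tf} forces $t_f=t_m$, hence $x[t_1^*,t_f,T-t_f](t_f)=1/\sigma_f$, and on $(t_f,T]$ the dynamics are $\sigma\equiv\sigma_s$ with $x$ strictly decreasing, so $\sigma_f x(r)-1<0$ there and $w_b(t_f)<0$. In either case $w_b(t_f)\le 0$.
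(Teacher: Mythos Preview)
Your derivation of the identity
\[
\Big(w_b(t_2)-\tfrac{1}{\gamma K}\Big)y_3=-\tfrac{1}{\gamma}+(\sigma_f-\sigma_s)\,y_3\!\int_{t_2}^{T}\frac{x(r)}{y(r)}\,dr
\]
is exactly what the paper does (their Eq.~\eqref{eq: equivhache}), and your argument for $w_b(t_f)\le 0$ matches the paper as well. The two places where you diverge are the monotonicity of $Q(t_2)=y_3\int_{t_2}^T x/y\,dr$ and the proof that $w_b(t_1^*)>0$.

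\textbf{Monotonicity of $Q$.} Here the paper takes a much shorter route than the Lemma~\ref{le: wbdecrece}-style computation you propose. Since $\sigma\equiv\sigma_s$ on $[t_2,T]$, one has $\frac{y_3}{y(r)}=\exp\!\big(\gamma\sigma_s\int_r^T x\,ds-\gamma(T-r)\big)$, and hence for each fixed $r$
\[
\frac{\partial}{\partial t_2}\Big(\frac{y_3}{y(r)}\Big)=\frac{y_3}{y(r)}\,\gamma\sigma_s\!\int_r^T\frac{\partial x}{\partial t_2}\,ds.
\]
On $[t_2,T]$ (which equals $[t_2,t_2+\mu]$ when $\mu=T-t_2$) we have $\partial x/\partial\mu=0$ by \eqref{eq: deriv_x_mu_0_t2masmu}, so the total $t_2$-derivative of $x[t_1^*,t_2,T-t_2](r)$ is just $\gamma K(\sigma_s x_2-1)u_2(r)<0$ by \eqref{eq: deriv_x_t2_t2_t2masmu} and the positivity of $u_2$ in \eqref{eq: u_ti}. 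Thus $x(r)$, $y_3/y(r)$, and the interval of integration all decrease in $t_2$, so $Q$ is decreasing. No auxiliary functions, no dichotomy on $\sigma_f(x+y)-1$ are needed; the ``main obstacle'' you flag disappears. Your plan via analogues of $g,f,i$ may also work, but you do not carry it out, and it is considerably heavier than necessary.

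\textbf{The sign $w_b(t_1^*)>0$.} Your route through $w_b(t_1^*)=w(t_1^*)$, exclusion of case~1.1, and monotonicity of $w$ from \cite{Balderrama22} is different from the paper's and is plausible, but it leans on a precise monotonicity statement for $w$ that you would have to extract across both branches of \eqref{eq: omega} (in cases~1.3--1.4 one has $t^0\ge T-\tau$, so $t_b$ and $t^0$ may lie in different branches). The paper instead argues by contradiction, entirely within the present setup: if $w_b(t_1^*)\le 0$, Lemma~\ref{le: wbdecrece} forces $w_b<0$ on $(t_1^*,t_c]$, and the just-proved monotonicity on $[t_c,t_f]$ then makes $\tilde J'<0$ on all of $(t_1^*,t_f)$ via \eqref{eq: Jtilde_primat2}; hence $t_2^*=t_1^*$, contradicting the existence of a boundary arc (Lemma~\ref{le: extremal restringido tiene arco frontera}). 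This avoids any dependence on the fine structure of $w$.
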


\begin{proof}
For $t_2\in [t_{c},t_f]$, $y_3= y[t_1^*,t_2,T-t_2](T)$, from equation \eqref{eq: cuenta_aux} we
have the identity
\begin{align} \label{eq: equivhache}
 \left(w_b(t_2)- \frac{1}{\gamma K}\right) y_3
=   (\sigma_f-\sigma_s) y_3  \int_{t_2}^T \frac{ x[t_1^*,t_2,T-t_2](r)}{y[t_1^*,t_2,T-t_2](r)} dr-\frac{1}{\gamma }.
\end{align} 
Using that for $r\in [t_2,T]$
\begin{align*}
\frac{\partial}{\partial t_2}\left(  \frac{y[t_1^*,t_2,T-t_2](T)}{y[t_1^*,t_2,T-t_2](r)} \right) =\frac{y[t_1^*,t_2,T-t_2](T)}{y[t_1^*,t_2,T-t_2](r)} \gamma\sigma_s \int_r^T \frac{\partial}{\partial t_2} x[t_1^*,t_2,T-t_2](r) dr,
\end{align*}
equations \eqref{eq: deriv_x_mu_0_t2masmu}, \eqref{eq: deriv_x_t2_t2_t2masmu}, the definition of $u_2$ given in \eqref{eq: def_u} and its positivity (see \eqref{eq: u_ti}) we obtain that the right hand side of  equation \eqref{eq: equivhache} is decreasing as a function of $t_2$ yielding the left hand side is also decreasing as a function of $t_2$.

The second statement follows from the definition of $t_f$ from where we have that $G(t_f)=0$ or $x(r)<\frac{1}{\sigma_f}$ for all $r\in [t_f,t_f+G(t_f)]$ implying that $w_b(t_f)\le 0$. Also, assume $w_{b}(t_1^*)\le 0$, then from Lemma \ref{le: wbdecrece} $s_0=t_1^*$ and $w_b$ is strictly decreasing on $(t_1^*,t_{c})$ from where $w_b(t_2) < 0$ for all $t_2\in(t_1^*,t_{c}]$. Moreover, using that $w_b(t_{c})\le 0$, the positivity of $\gamma K$ and $y[t_1^*,t_{c},T-t_{c}](T)$ and the monotonicity of the function $(w_b(t_2)-\frac{1}{\gamma K})y[t_1^*,t_2,T-t_2](T)$ with respect to $t_2$ in the interval $[t_{c},t_{f}]$ we obtain that $y[t_1^*,t_2,T-t_2](T)\left( w_b(t_2)-\frac{1}{\gamma K}\right) <0$ for all $t_2\in [t_{c},t_f]$. Thus, from \eqref{eq: Jtilde_primat2}, we conclude that $\tilde{J}(t_2)$ is decreasing on $(t_1^*,t_f)$ yielding $t_2^*=t_1^*$ and $\mu^*=G(t_1^*)$ contradicting the existence of a boundary arc (Lemma \ref{le: extremal restringido tiene arco frontera}).  

\end{proof}

\begin{theorem} \label{te: control_optimo_sigma1gral_rest_v2}
Let $0\le \sigma_s<\sigma_f$ with $\sigma_s<1$. Let $\sigma^0$ be the optimal control for the unconstrained problem  \eqref{eq: funcional_J_iso}-\eqref{eq: rest_int} given by \eqref{eq: sigmacero_con_rest_int} and $(x^0,y^0)$ the associated optimal state given by the solution of the system \eqref{eq: SIR_iso_x}-\eqref{eq: SIR_iso_y} with $\sigma=\sigma^0$. 
In the case that there exists $t\in [0,T]$ such that $y^0(t)>K$ we assume that any contact or exit point $s$ of the constraint extremal satisfies $x^*(s)>\frac{1}{\sigma_f}$. Then the optimal control $\sigma^*$ for the restricted problem \eqref{eq: funcional_J_iso}-\eqref{eq: restr_yt} is unique and is given by
\begin{equation} \label{eq: sigma_sin_rest}
\sigma^*(t):=\sigma[t_1^*,t_2^*,\mu^*](t)= \left\{ \begin{array}{ll}
\sigma_f& \text{ for } 0\le t \le t_1^*\\
\sigma_b(t) & \text{ for } t_1^* <t \le t_2^*\\
\sigma_s & \text{ for } t_2^* < t  \le t_2^* +\mu^* \\
\sigma_f& \text{ for } t_2^* +\mu^* \le t \le T,
\end{array}\right.
\end{equation}
where $\sigma_b(t)=1/x^*(t)$. Moreover,

{\bf Case 1}: If  $y^0(t)\le K$ for all $t\in [0,T]$, then $t_1^*=t_2^*=t^0$, $\mu^*=\mu^0$ obtaining the result given in Theorem \ref{te: control_optimo_sigma1gral} where $\sigma[t^0,t^0,\mu^0]=\sigma^0$.

{\bf Case 2}: If there exists $t\in [0,T]$ such that $y^0(t)>K$, then $t_1^*= t_{b}$ and for $t_{c}$ defined in \eqref{def: tc}  and $w_b$ defined in \eqref{eq: wb} we have
\begin{itemize}
\item[2.1.] If $w_{b}(t_{c})\le 0$, then there exists a unique $\bar{t} \in (t_1^*,t_{c})$ such that $w_{b}(\bar{t})=0$ and thus
$t_2^*=\bar{t}$, $\mu^*=G(\bar{t})$. 
\item[2.2.] If $\displaystyle 0< w_{b}(t_{c})\le \frac{1}{\gamma K}$, then $t_2^*=t_{c}$, $\mu^*=G(t_{c})$. 
\item[2.3.] If $\displaystyle w_{b}(t_{c})> \frac{1}{\gamma K}$, then there exists a unique 
$\breve{t} \in (t_{c},t_f)$ such that $w_{b}(\breve{t})=\frac{1}{\gamma K}$ and thus
$t_2^*=\breve{t}$, $\mu^*=G(\breve{t})$.
\end{itemize}
\end{theorem}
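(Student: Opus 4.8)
The plan is to assemble Theorem~\ref{te: control_optimo_sigma1gral_rest_v2} from the structural results already established. \textbf{Case 1} is essentially immediate: if $y^0(t)\le K$ for all $t$, then $(x^0,y^0,\sigma^0)$ is feasible for the fully constrained problem \eqref{eq: SIR_iso}, and since it is optimal for the less constrained problem \eqref{eq: funcional_J_iso}--\eqref{eq: rest_int}, it is a fortiori optimal for \eqref{eq: SIR_iso}; uniqueness is inherited from Theorem~\ref{te: control_optimo_sigma1gral}. Writing $\sigma^0=\sigma[t^0,t^0,\mu^0]$ (a boundary arc of zero length) puts it in the claimed form with $t_1^*=t_2^*=t^0$, $\mu^*=\mu^0$.

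For \textbf{Case 2}, the first step is to invoke Lemma~\ref{le: dos saltos} so that any optimal $\sigma^*$ has the four-phase form \eqref{eq: sigma opt gral_kappa0} with parameters $(t_1^*,t_2^*,\mu^*)\in{\mathcal R}$, and Lemma~\ref{le: t1*_es_tb} to fix $t_1^*=t_b$. Then, with $t_1^*$ fixed, the problem reduces to maximising $J(t_2,\mu)$ over the two-dimensional region ${\mathcal R}[t_1^*]$ of \eqref{eq: regionR}. The monotonicity \eqref{eq: deriv_J_resp_mu_cuerpo} forces $\mu^*=G(t_2^*)$, so it only remains to maximise $\tilde J(t_2)=J(t_2,G(t_2))$ on $[t_1^*,t_f]$, whose derivative is given by \eqref{eq: Jtilde_primat2}. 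Since the prefactor there is strictly positive, the sign of $\tilde J'$ on $[t_1^*,t_c)$ equals the sign of $w_b(t_2)$, and on $(t_c,t_f]$ equals the sign of $w_b(t_2)-\tfrac{1}{\gamma K}$ — equivalently, by Lemma~\ref{le: hachedecrece}, the sign of the decreasing function $\big(w_b(t_2)-\tfrac{1}{\gamma K}\big)y[t_1^*,t_2,T-t_2](T)$.

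The core of the argument is then a case analysis on the sign of $w_b$ at the matching point $t_c$, using Lemmas~\ref{le: wbdecrece} and~\ref{le: hachedecrece}. By Lemma~\ref{le: hachedecrece}, $w_b(t_1^*)>0$ and $w_b(t_f)\le 0$. In subcase 2.1 ($w_b(t_c)\le 0$): since $w_b(t_1^*)>0\ge w_b(t_c)$ and, by Lemma~\ref{le: wbdecrece}, $w_b$ changes sign at most once on $[t_1^*,t_c]$ (positive up to $s_0$, strictly decreasing thereafter), there is a unique $\bar t\in(t_1^*,t_c)$ with $w_b(\bar t)=0$; $\tilde J$ increases on $(t_1^*,\bar t)$ and decreases on $(\bar t,t_c)$, while on $(t_c,t_f]$ one has $w_b\le w_b(t_c)\le 0<\tfrac1{\gamma K}$ so $\tilde J$ keeps decreasing (using the monotonicity from Lemma~\ref{le: hachedecrece} to propagate the sign past $t_c$), giving the global max at $t_2^*=\bar t$. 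In subcase 2.2 ($0<w_b(t_c)\le\tfrac1{\gamma K}$): $\tilde J$ is nondecreasing on $(t_1^*,t_c)$ because $w_b>0$ there (to the left of $t_c$, Lemma~\ref{le: wbdecrece} gives $w_b$ either positive throughout or positive-then-decreasing but still $\ge w_b(t_c)>0$ by the at-most-one-sign-change property), and nonincreasing on $(t_c,t_f)$ since $w_b(t_c)-\tfrac1{\gamma K}\le 0$ and the weighted function is decreasing; hence $t_2^*=t_c$. In subcase 2.3 ($w_b(t_c)>\tfrac1{\gamma K}$): $\tilde J$ increases on all of $(t_1^*,t_c)$, and on $(t_c,t_f)$ the decreasing function $\big(w_b(t_2)-\tfrac1{\gamma K}\big)y[\cdots](T)$ is positive at $t_c^+$ and negative at $t_f$ (as $w_b(t_f)\le 0$), so it has a unique zero $\breve t\in(t_c,t_f)$, equivalently $w_b(\breve t)=\tfrac1{\gamma K}$; $\tilde J$ increases up to $\breve t$ and decreases afterwards, giving $t_2^*=\breve t$. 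In all subcases $\mu^*=G(t_2^*)$, and feeding $(t_1^*,t_2^*,\mu^*)$ into \eqref{eq: sigma opt gral_kappa0} yields \eqref{eq: sigma_sin_rest}.

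Uniqueness follows because in each subcase $\tilde J$ is shown to be strictly unimodal (strictly increasing then strictly decreasing, with the plateau of $w_b$ to the left of $s_0$ being harmless since $\tilde J$ is constant there only if $w_b\equiv$ something — actually $w_b>0$ strictly on $[t_1^*,s_0)$ so $\tilde J$ is strictly increasing there too), so the maximiser $t_2^*$ is unique, $\mu^*=G(t_2^*)$ is then determined, and $t_1^*=t_b$ is forced; this, together with the uniqueness already built into Lemma~\ref{le: dos saltos}'s derivation via contradiction with Theorem~\ref{te: control_optimo_sigma1gral}, gives uniqueness of $\sigma^*$. The main obstacle I anticipate is the bookkeeping at the junction $t_c$: one must check carefully that the continuity of $\tilde J$ (inherited from continuity of $w_b$ and of $G$) together with the two different sign-governing quantities on either side of $t_c$ — $w_b$ on the left versus the weighted $w_b-\tfrac1{\gamma K}$ on the right — actually glue into a single unimodal profile, and that the degenerate configurations $t_c=t_1^*$, $t_c=t_f$, $s_0=t_1^*$, $s_0=t_c$, or $t_m<T$ versus $t_m=T$ do not produce spurious extra critical points. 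Handling these boundary cases, and confirming that the hypothesis $x^*(s)>1/\sigma_f$ at contact/exit points (needed for Lemmas~\ref{le: phi es continua}--\ref{le: dos saltos}) is consistent with the constructed solution, is where the real care is required.
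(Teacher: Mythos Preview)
Your proposal is correct and follows essentially the same approach as the paper: reduce Case~1 by feasibility of $\sigma^0$, and in Case~2 invoke Lemmas~\ref{le: dos saltos} and~\ref{le: t1*_es_tb} to fix the structure and $t_1^*=t_b$, use \eqref{eq: deriv_J_resp_mu_cuerpo} to push $\mu^*$ to $G(t_2^*)$, and then do the three-way case split on $w_b(t_c)$ via \eqref{eq: Jtilde_primat2} together with Lemmas~\ref{le: wbdecrece} and~\ref{le: hachedecrece}. One small point: in subcase~2.1 your claim ``$w_b\le w_b(t_c)$ on $(t_c,t_f]$'' is not what Lemma~\ref{le: hachedecrece} gives (it controls the \emph{weighted} quantity $(w_b-\tfrac{1}{\gamma K})y_3$, not $w_b$ itself), but your parenthetical correctly identifies the actual mechanism, which matches the paper.
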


\begin{proof}

Let $\sigma^0$ and $(x^0,y^0)$ be as in Theorem \ref{te: control_optimo_sigma1gral} and $\sigma^*$ and $(x^*,y^*,v^*)$ an optimal solution of the restricted problem. 
We consider the following cases:

{\bf Case 1}: If $y^0(t)\le K$ for all $t\in [0,T]$, it satisfies constraint \eqref{eq: restr_yt}. Therefore $\sigma^0$ is an admissible control,  concluding that is also optimal for the constrained problem. Thus, the optimal control consists of taking $t_2^*=t_1^*=t^0$ and $\mu^*=\mu^0$.

{\bf Case 2}: If the exists $t\in [0,T]$ such that $y^0(t)> K$, from lemma \ref{le: t1*_es_tb} we have that $t_1^*=t_b$.
In what follows we will characterise $(t_2^*,\mu^*) \in R[t_1^*]$ defined in \eqref{eq: regionR}.
 
\begin{itemize}
\item[{\it 2.1.}] Since $w_{b}(t_1^*)> 0$ (see Lemma \ref{le: hachedecrece}) and $w_{b}(t_{c})\le 0$, from Lemma \ref{le: wbdecrece} there exists a unique $\bar{t} \in (t_1^*,t_{c}]$ such that $w_b(\bar{t})=0$, $w_b(s)>0$ for $s\in [t_1^*,\bar{t})$ and $w_b(s)<0$ for $s\in (\bar{t},t_{c}]$. Thus, $\tilde{J}(t_2)$ increases on $[t_1^*,\bar{t})$ and, in the same way as for the previous item, from \eqref{eq: Jtilde_primat2}, we deduce that $\tilde{J}(t_2)$ decreases on $(\bar{t},t_f]$ proving that $t_2^*=\bar{t}$ and $\mu^*=G(\bar{t})$.
\item[{\it 2.2.}] If $\displaystyle 0< w_{b}(t_{c})\le \frac{1}{\gamma K}$, from Lemma \ref{le: wbdecrece} we have that $w_b(t_2)> 0$ for all $t_2\in [t_1^*,t_{c}]$ and then,  from \eqref{eq: Jtilde_primat2}, $\tilde{J}(t_2)$ is increasing on $[t_1^*,t_{c}]$.  On the other hand, since 
$y[t_1^*,t_{c},T-t_{c}](T)\left(w_{b}(t_{c})- \frac{1}{\gamma K} \right)\le 0$, from Lemma \ref{le: hachedecrece}  and equation \eqref{eq: Jtilde_primat2}, we have that  $\tilde{J}(t_2)$ is decreasing on $[t_{c},t_f]$ concluding that $t_2^*=t_{c}$ and $\mu^*=G(t_{c})$.
\item[{\it 2.3.}]
If $\displaystyle w_{b}(t_{c})> \frac{1}{\gamma K}$, from Lemma \ref{le: wbdecrece} we have that $w_b(t_2)> 0$ for all $t_2\in [t_1^*,t_{c}]$. On the other hand, since $y[t_1^*,t_{c},T-t_{c}](T)\left(w_{b}(t_{c})- \frac{1}{\gamma K} \right) > 0$ from 
Lemma \eqref{le: hachedecrece}, we have that there exists a unique $\breve{t}  \in (t_{c},t_f)$ such that  $w_b(\breve{t})=\frac{1}{\gamma K}$.
Thus, from \eqref{eq: Jtilde_primat2}, we have that  $\tilde{J}(t_2)$ is increasing on $[t_1^*,\breve{t}]$ and decreasing on $[\breve{t},t_{f}]$ proving that  $t_2^*=\breve{t}$ and $\mu^*=G(\breve{t})$. 
\end{itemize}
\end{proof}

\begin{remark}\label{re: final}
From the definition of $t_c$ we have three possibilities: $t_c=t_1^*$, $t_c \in (t_1^*,t_f)$ or $t_c=t_f$.
In the first case, when $t_c=t_1^*$, item  2.1. in Theorem \ref{te: control_optimo_sigma1gral_rest_v2} is never satisfied. Moreover, using a similar argument as in the proof of Lemma \ref{le: hachedecrece}, from Lemma \ref{le: extremal restringido tiene arco frontera} item 2.2. cannot be fulfilled obtaining $t_2^*=\breve{t}$ and $\mu^*=G(\breve{t})=T-\breve{t}$. 
If $t_c \in (t_1^*,t_f)$, the three items can be satisfied, for item 2.1. in Theorem  \ref{te: control_optimo_sigma1gral_rest_v2} $G(t_2^*)=F[t_1^*](t_2^*)$ and for items 2.2. and 2.3. $G(t_2^*)=T-t_2^*$.
 Finally, in the third case, when $t_c=t_f$, from Lemma \ref{le: hachedecrece} we have that items 2.2. and 2.3. in Theorem  \ref{te: control_optimo_sigma1gral_rest_v2} are never satisfied and thus we have $t_2^*=\bar{t}$, $\mu^*=G(\bar{t})=F[t_1^*](\bar{t})$. 
\end{remark}


\section{Numerical results} 
\label{se: numerical}

Theorem~\ref{te: control_optimo_sigma1gral_rest_v2} summarises the main results obtained in this article, which provides a rigorous optimal solution of the control problem defined in equations~(\ref{eq: SIR_iso}).  That is, Theorem~\ref{te: control_optimo_sigma1gral_rest_v2} gives the optimal starting time of the strict quarantine $t_2^*$ and its associated duration $\mu^*$ that minimises the cumulative number of ever-infected individuals (recovered), for a given set of epidemic parameters and  under two restrictions: 
the maximum allowed fraction of infected individuals $K$ over the control period $T$ (\ref{eq: restr_yt}), and (\ref{eq: rest_int}) implying that the maximum duration of the strict quarantine for the optimal solution is $\tau$ (see Lemma \ref{le: F_crece}).
In particular, the optimal solutions can be grouped into two different case scenarios that depend on the relation between $K$ and $y_{max}^0$ (the maximum value of $y^0(t)$ in the interval $[0,T]$, where $y^0(t)$ is the optimum trajectory of the model without restriction). When $y_{max}^0$ is larger than $K$, the optimum $t_2^*$ is larger than the time $t_1^*$ at which the trajectory of the system in the $x$--$y$ space enters into the boundary arc $y(t)=K$, while for  $y_{max}^0$ smaller than $K$ the system never enters into the boundary arc and the optimum $t_2^*$ corresponds to that of the model with no restriction, $t_2^*=t^0$ and $\mu^*=\mu^0$ (Theorem~\ref{te: control_optimo_sigma1gral}).

In order to test Theorem~\ref{te: control_optimo_sigma1gral_rest_v2}, here we compare a numerical solution of the control problem (\ref{eq: SIR_iso}) with that predicted by Theorem~\ref{te: control_optimo_sigma1gral_rest_v2}.  We integrated the system of equations~(\ref{eq: SIR_iso_x}) and (\ref{eq: SIR_iso_y}) numerically under the constrains given by equations~(\ref{eq: rest_int}) and (\ref{eq: restr_yt}), and found approximate values for the optima $t_2^*$ and $\mu^*$.  Below we present results for two different thresholds, $K=0.03$ (Figs.~\ref{w-t2-mu-K-003} and \ref{sigma-x-y-K-003}) and $K=0.06$ (Figs.~\ref{w-t2-mu-K-006} and \ref{sigma-x-y-K-006}), which allow to capture the different regimes corresponding to all possible different cases described in Theorem~\ref{te: control_optimo_sigma1gral_rest_v2}.  For that, we calculated numerically $y_{max}^0$ and the functions $w$ and $w_b$, for several values of $\tau$ in the range $[0,155]$, while keeping the other parameters fixed ($T=365, \gamma=0.1, \sigma_s =0.8, \sigma_f=1.5$), as shown in panels (c) and (d) of Figs.~\ref{w-t2-mu-K-003} and \ref{w-t2-mu-K-006}.  Then, for a given $\tau$, the values of $y_{max}^0$, $w$ and $w_b$ determine the case as given in Theorem~\ref{te: control_optimo_sigma1gral_rest_v2}, and thus the numerical values of $t_2^*$ and $\mu^*$. 

Figure~\ref{w-t2-mu-K-003} shows cases $2.1$, $2.2$ and $2.3$ of Theorem~\ref{te: control_optimo_sigma1gral_rest_v2}  that are observed for a relatively low threshold $K=0.03$.  Here, the maximum value $y_{max}^0$ that takes $y^0(t)$ when there is no threshold restriction overcomes $K$ for all $\tau$ values ($y_{max}^0>K$).  These cases correspond to different intervals of $\tau$ determined by the functions $w_b(t_c)$ and $(\gamma K)^{-1}$, as we see in panels (c) and (d).  In respective panels (a) and (b) we compare the behaviour of $t_2^*$ and $\mu^*$ as a function of $\tau$ calculated numerically (filled circles) with that predicted from Theorem~\ref{te: control_optimo_sigma1gral_rest_v2} (empty squares), where we observe an excellent agreement.  The numerical estimation of the optimum (filled circles) was obtained by numerical integration of the dynamical system of equations (\ref{eq: SIR_iso_x}) and (\ref{eq: SIR_iso_y}) with initial condition $(x_0,y_0)=(1-10^{-6},10^{-6})$ and constraints (\ref{eq: rest_int}) and (\ref{eq: restr_yt}), for several sets of $t_2$ and $\mu$, and calculating the values $t_2^*$ and $\mu^*$ for which the asymptotic fraction of susceptible individuals $x_{\infty}$ reaches a maximum value.  For its part, the optimum from Theorem~\ref{te: control_optimo_sigma1gral_rest_v2} (empty squares) was calculated according to the numerically estimated values of $\overline{t}$, $t_c$ and $\breve{t}$ for $t_2^*$, and $G(\overline{t})=F[t_1^*](\overline{t})$, $G(t_c)=T-t_c$ and $G(\breve{t})=T-\breve{t}$ for $\mu^*$, which correspond to cases $2.1$, $2.2$ and $2.3$, respectively.

Right panels of Fig.~\ref{sigma-x-y-K-003} show the evolution of the system in the $x$--$y$ space with threshold $K=0.03$, for three different values of $\tau$ that correspond to the three different cases $2.1$, $2.2$ and $2.3$, and various values of $t_2$ (beginning of strict quarantine $\sigma(t)=\sigma_s$) in each case.  All curves $(x(t),y(t))$ start at $(x_0,y_0)$ and initially follow the curve with quarantine free reproduction number $\sigma(t)=\sigma_f$.  Then, some curves hit the boundary arc $y(t)=K$ at $t_1^* \simeq 212.93$, and then continue along it with $\sigma(t)=1/x(t)$ until they quit the boundary arc at either time $t_2$ when the strict quarantine begins, or $t_m$ when $x(t_m)=1/\sigma_f$.  One curve shows the case when the strict quarantine is applied before the boundary arc ($t_2=205<t_1^*$), and the rest of the cases correspond to applying the strict quarantine after $t_m$.  In each curve the strict quarantine lasts a time $\mu$ that depends on $t_2$ and fulfils the restriction $v(T) \ge \sigma_s \tau + \sigma_f (T-\tau)$.  The optimum $(t_2^*,\mu^*)$ leads to the maximum of $x_{\infty}$ (pink curves).
We see that in all cases the optimum corresponds to applying the strict quarantine after hitting the boundary arc ($t_2^*>t_1^*$) during a time period $\mu^*$ that is smaller than the maximum allowed $\tau$.  In other words, it turns more efficient to start the strict quarantine while on the boundary arc, and keep it for a time shorter than the maximum allowed $\tau$. 

The left panels of Fig.~\ref{sigma-x-y-K-003} show the associated evolution of $\sigma(t)$ for the optimum control $(t_2^*$,$\mu^*)$ (pink line), and for other two values $(t_2,\mu)$ close to the optimum, for each case.

Top panels of Fig.~\ref{sigma-x-y-K-003} show the case $2.1$ ($\tau=40$), where the maximum of $x_{\infty}$ is reached starting the strict quarantine at $t_2^*=\overline{t} \simeq 286.8$ for a period $\mu^*=F[t_1^*](\overline{t}) \simeq 16.5<\tau$.  Here we clearly see that implementing a shorter and later strict quarantine is more effective than using a longer but earlier strict quarantine, as we can see in the top--left panel by comparing $\sigma(t)$ for $\mu \simeq 20.5$ (green curve) and $\mu^* \simeq 16.5$ (pink curve).  Middle panels show the case $2.2$ ($\tau=110$), where the optimum corresponds to $t_2^*=t_c \simeq 277.8$ and $\mu^*=T-t_c \simeq 87.2<\tau$. Finally, bottom panels show the case $2.3$ ($\tau=140$), in which $t_2^*=\breve{t} \simeq 277.2$ and $\mu^*=T-\breve{t} \simeq 87.8<\tau$. 

In Fig.~\ref{w-t2-mu-K-006} we show cases $2.1$ and $1.2$, $1.3$ and $1.4$ of Theorem \ref{te: control_optimo_sigma1gral} that are seen for a high threshold $K=0.06$, where $y_{max}^0>K$ for $\tau \lesssim 70$ (case $2.1$), while $y_{max}^0<K$ for $\tau \gtrsim 70$ (cases $1.2$, $1.3$ and $1.4$).  In Fig.~\ref{w-t2-mu-K-006}(c) we observe that the region $\tau \lesssim 70$ corresponds to case $2.1$ where $w_b(t_c)<0$, and thus the optimum is given by $(t_2^*,\mu^*)=(\overline{t},G(\overline{t}))$.  Then, the region $70 \lesssim \tau \lesssim 123.75$ corresponds to case $1.2$ ($w(T-\tau)<0$) with an optimum at $(t_2^*,\mu^*)=(\hat{t},\tau)$, the region $123.75 \lesssim \tau \lesssim 124.93$ is the case $1.3$ ($0<w(T-\tau)\le (\gamma \, y(T-\tau))^{-1}$) with $(t_2^*,\mu^*)=(T-\tau,\tau)$, and the last region $\tau \gtrsim 123.93$ represents the case $1.4$ ($w(T-\tau) > (\gamma y(T-\tau))^{-1}$), where $(t_2^*,\mu^*)=(\tilde{t},T-\tilde{t})$.  Figure~\ref{sigma-x-y-K-006} is analogous to that of Fig.~\ref{sigma-x-y-K-003}, where we show the trajectories $(x,y)$ (right panels) and their associated evolution of $\sigma(t)$ (left panels) for three values of $\tau$ corresponding to cases $1.2$ ($\tau=80$), $1.3$ ($\tau=123.8$) and $1.4$ ($\tau=150$), with optima $(t_2^*,\mu^*)$ at $(\hat{t},\tau) \simeq (242.2,80)$, $(T-\tau,\tau) \simeq (241.2,123.8)$ and $(\tilde{t},T-\tilde{t}) \simeq (241.0,124.0)$, respectively.

\begin{figure}
  \begin{center}
    \includegraphics[width=9cm]{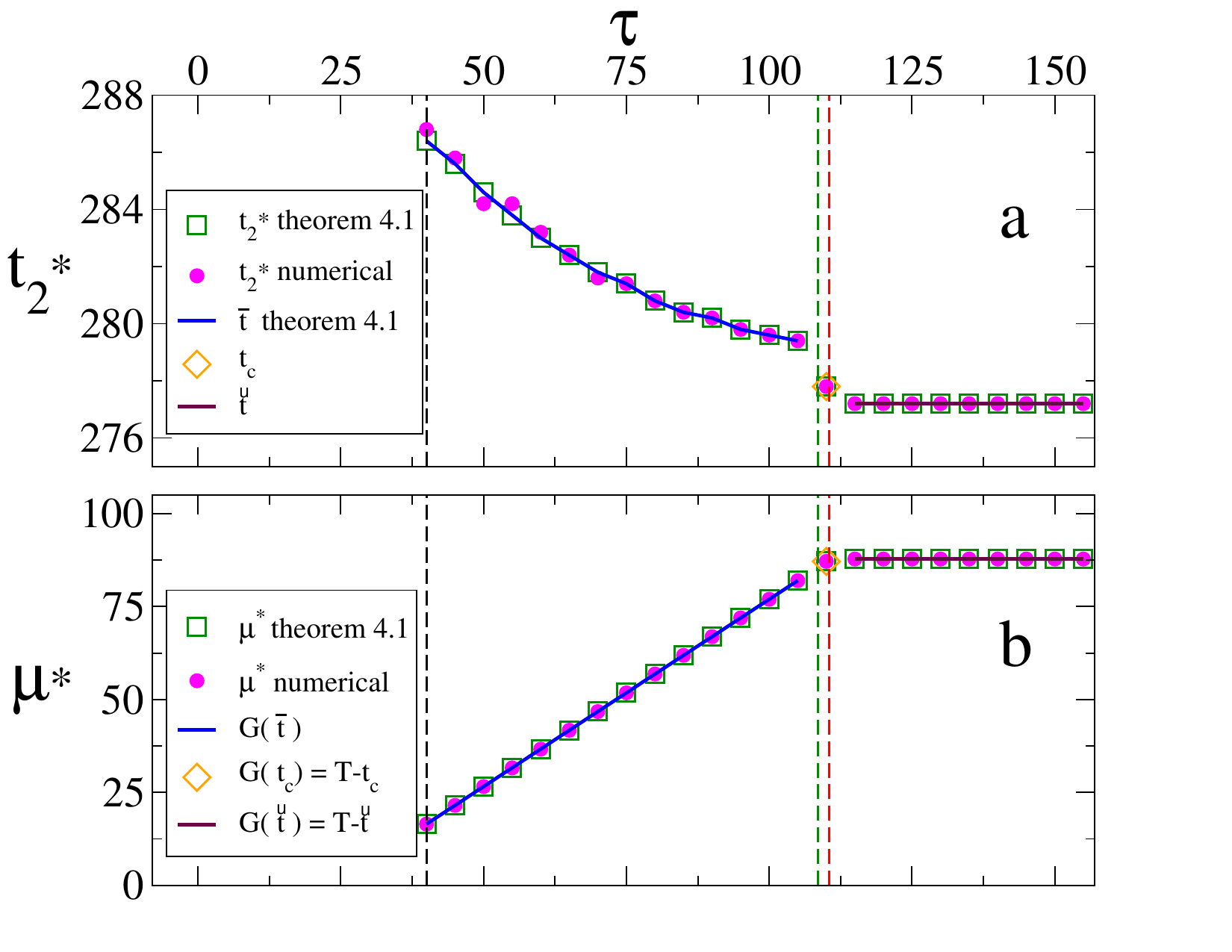} \\
    \vspace{-0.4cm}       
    \includegraphics[width=9cm]{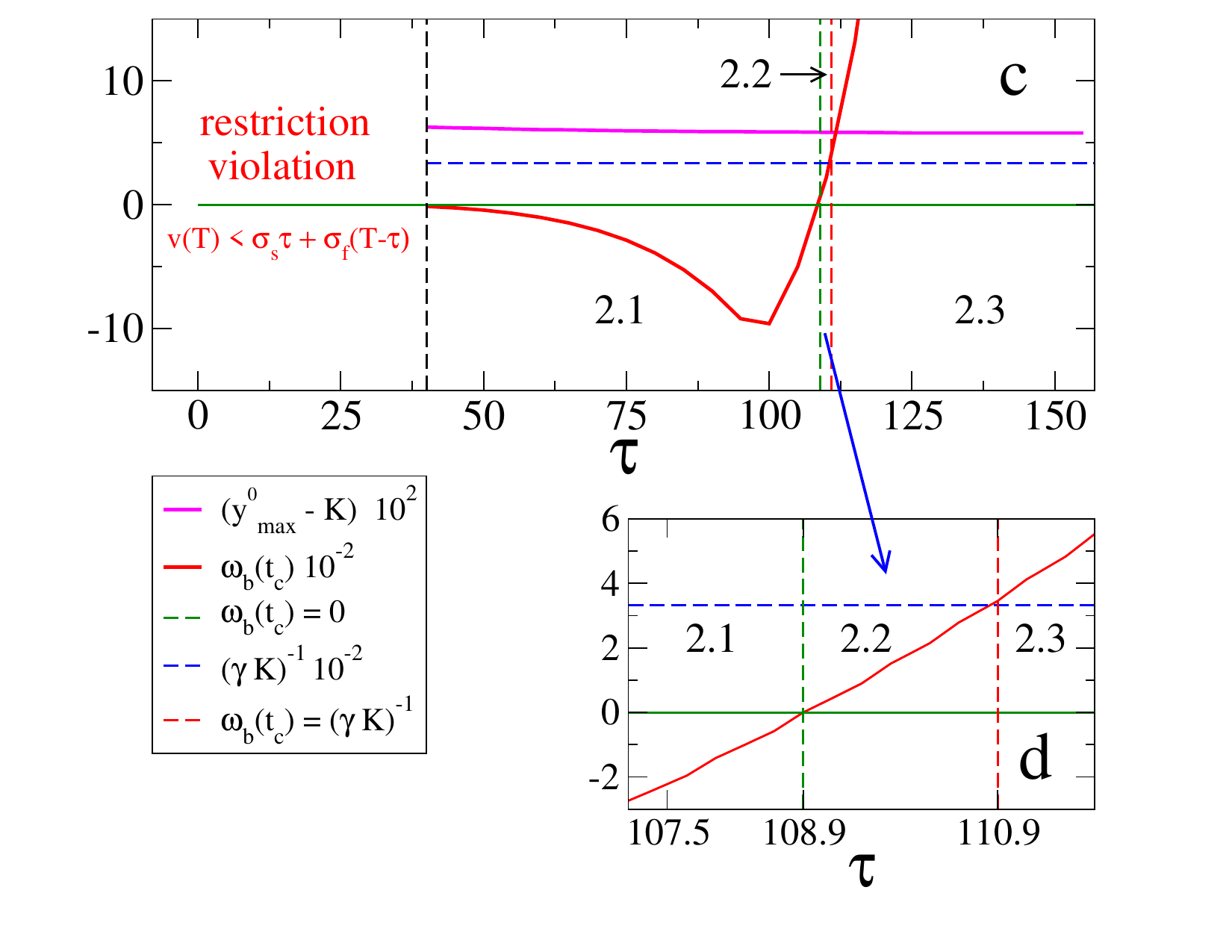}
  \end{center}  
  \caption{Optimal solution for threshold $K=0.03$, control period $T=365$, recovery rate $\gamma=0.1$, strict quarantine reproduction number $\sigma_s=0.8$, and quarantine free reproduction number $\sigma_f=1.5$.  The initial condition is $x_0=1-10^{-1}$ and $y_0=10^{-6}$.  (a) Optimal initial time of the strict quarantine $t_2^*$ vs maximum allowed duration of the strict quarantine $\tau$.  (b) Optimal duration of the strict quarantine $\mu^*$ vs $\tau$.  In both panels (a) and (b), empty squares represent the exact solution given by Theorem~\ref{te: control_optimo_sigma1gral_rest_v2}, solid circles correspond to the numerical solution that maximises the asymptotic fraction of susceptible individuals $x_{\infty}$, and solid lines are the exact solution in each region.  (c) Graphical determination of the times $\tau \simeq 108.9$ and $\tau \simeq 110.9$ denoted by vertical dashed lines that define the three regions for the different behaviours of $t_2^*$ and $\mu^*$.  (d) Same as panel (c) with a smaller scale to see region $2.2$ in more detail.}
  \label{w-t2-mu-K-003}
\end{figure}

\begin{figure}    
  \begin{center} 
    \includegraphics[width=7.5cm]{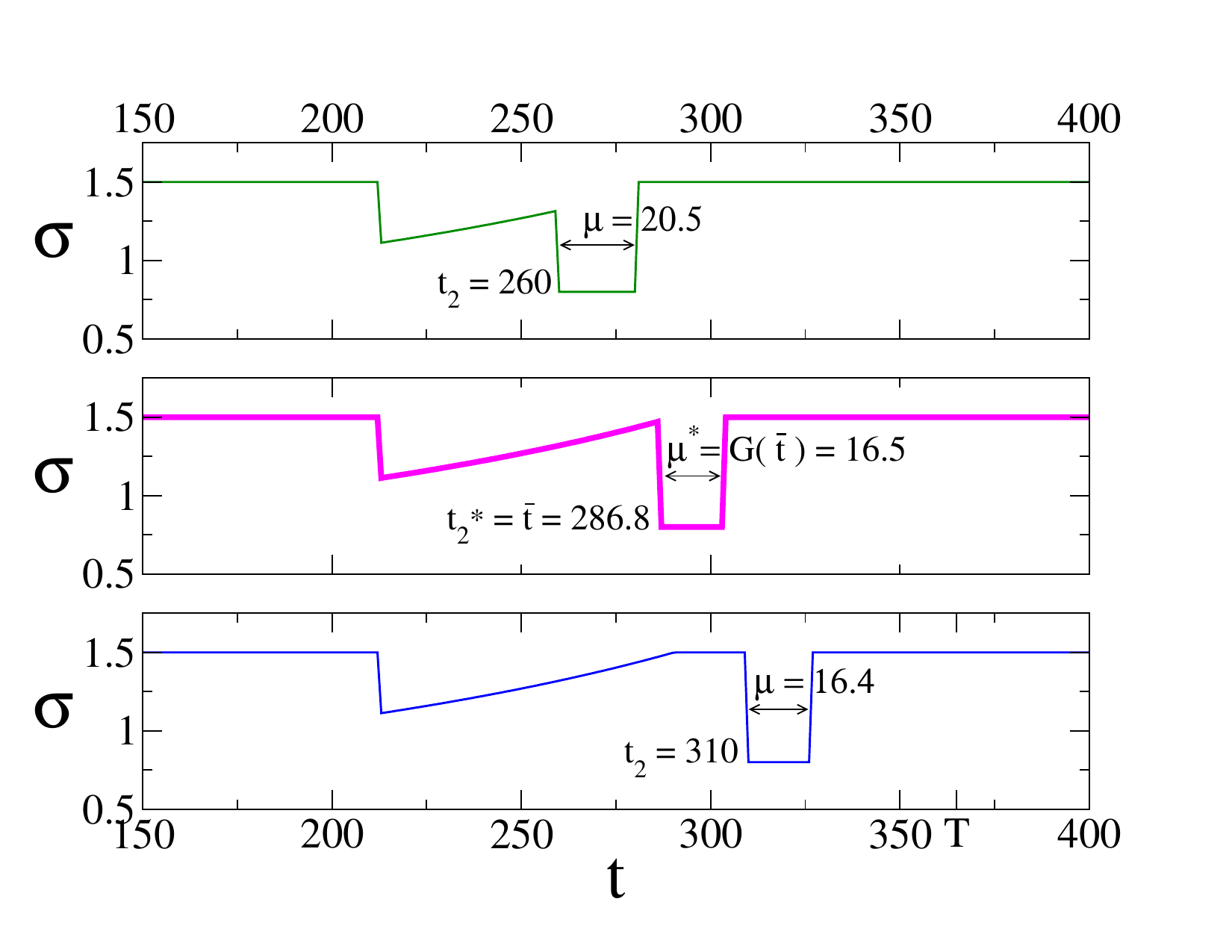} 
    \includegraphics[width=7.5cm]{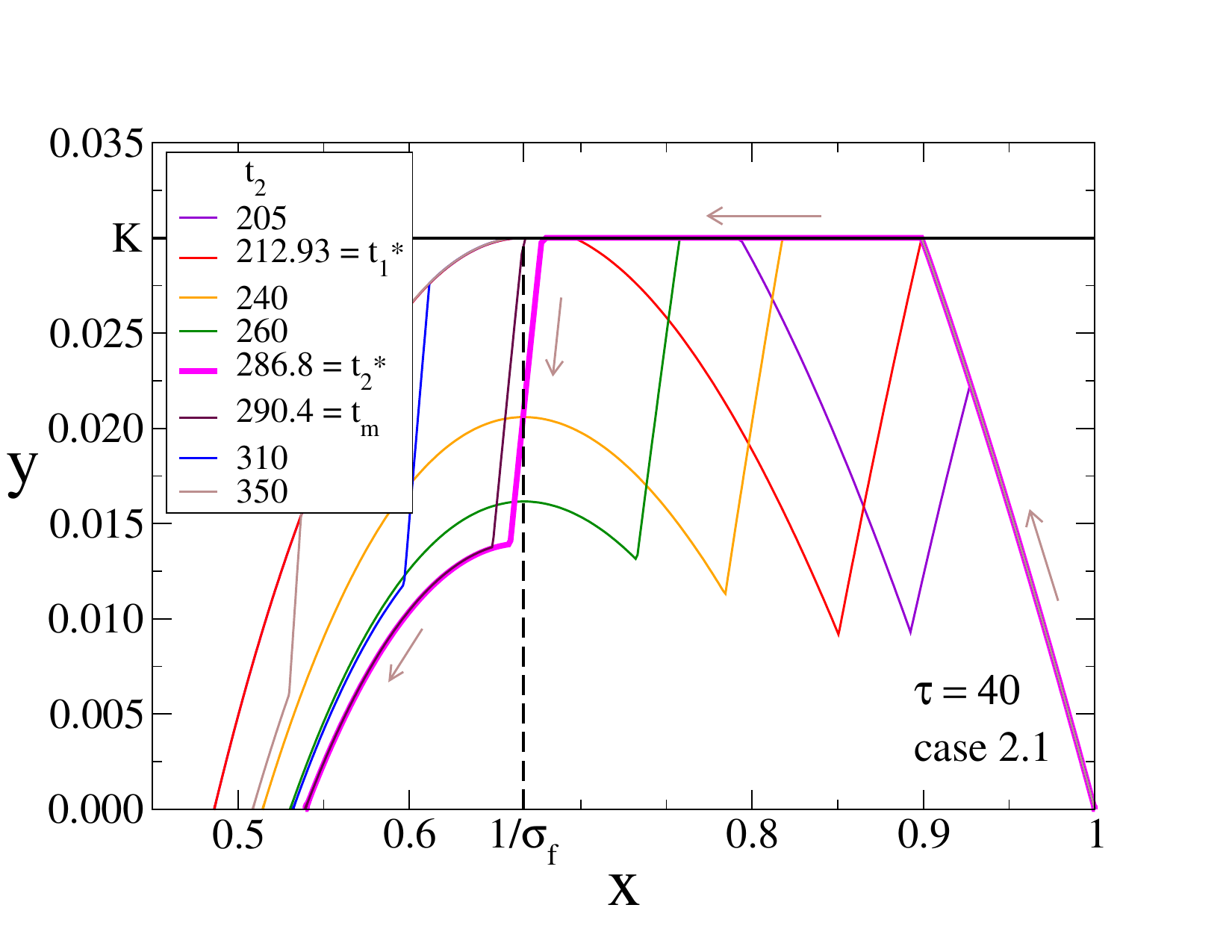} \\
    \vspace{-0.3cm}
    \includegraphics[width=7.5cm]{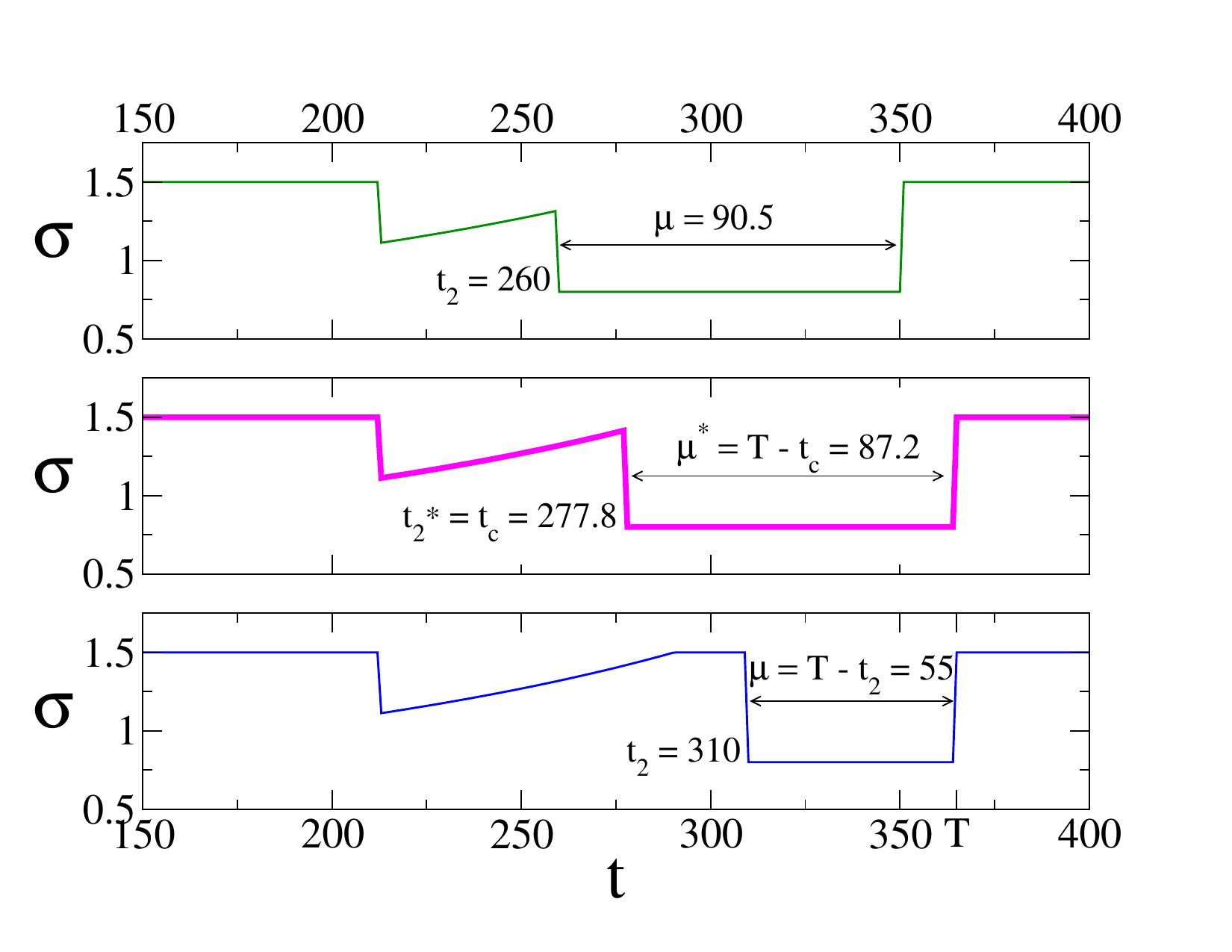} 
    \includegraphics[width=7.5cm]{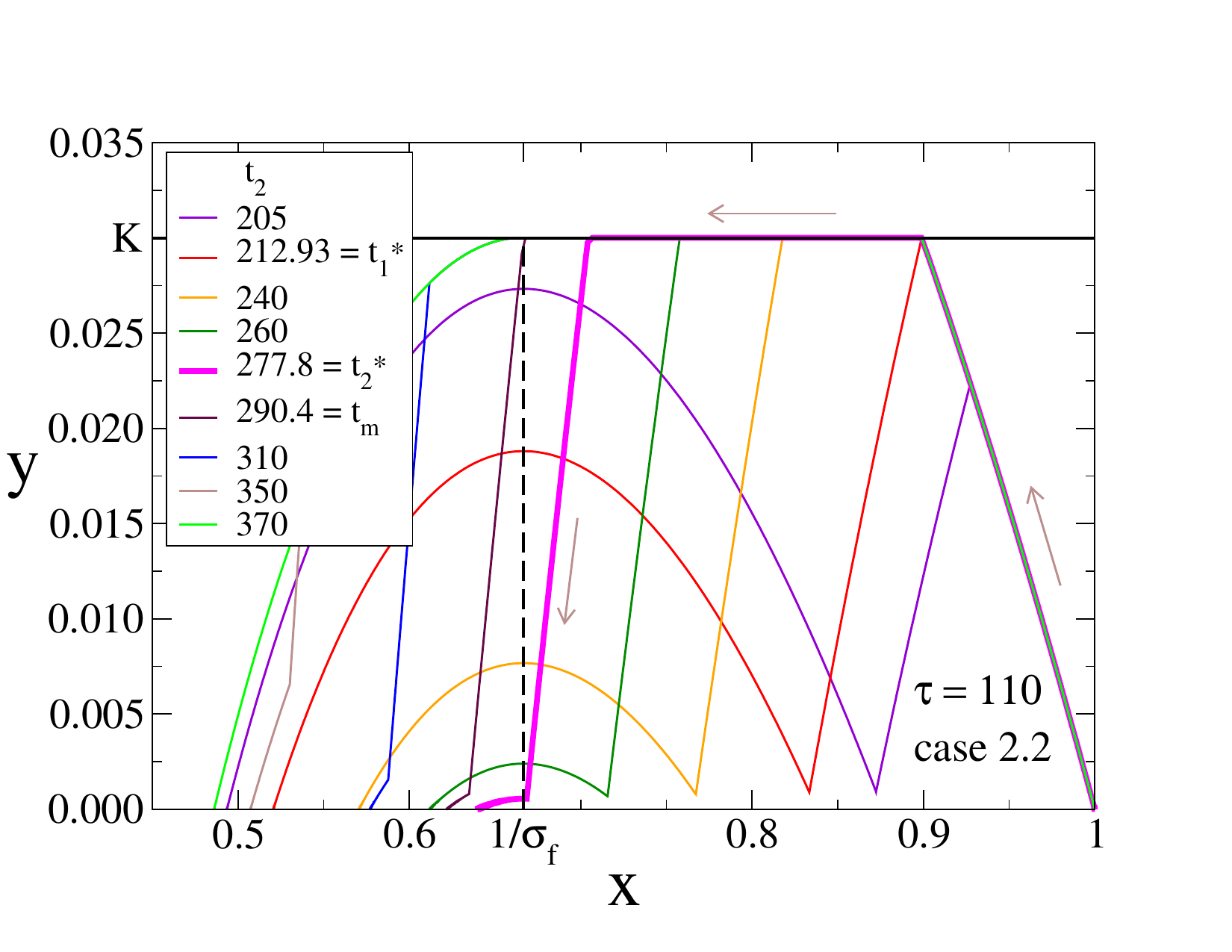} \\
    \vspace{-0.3cm}
    \includegraphics[width=7.5cm]{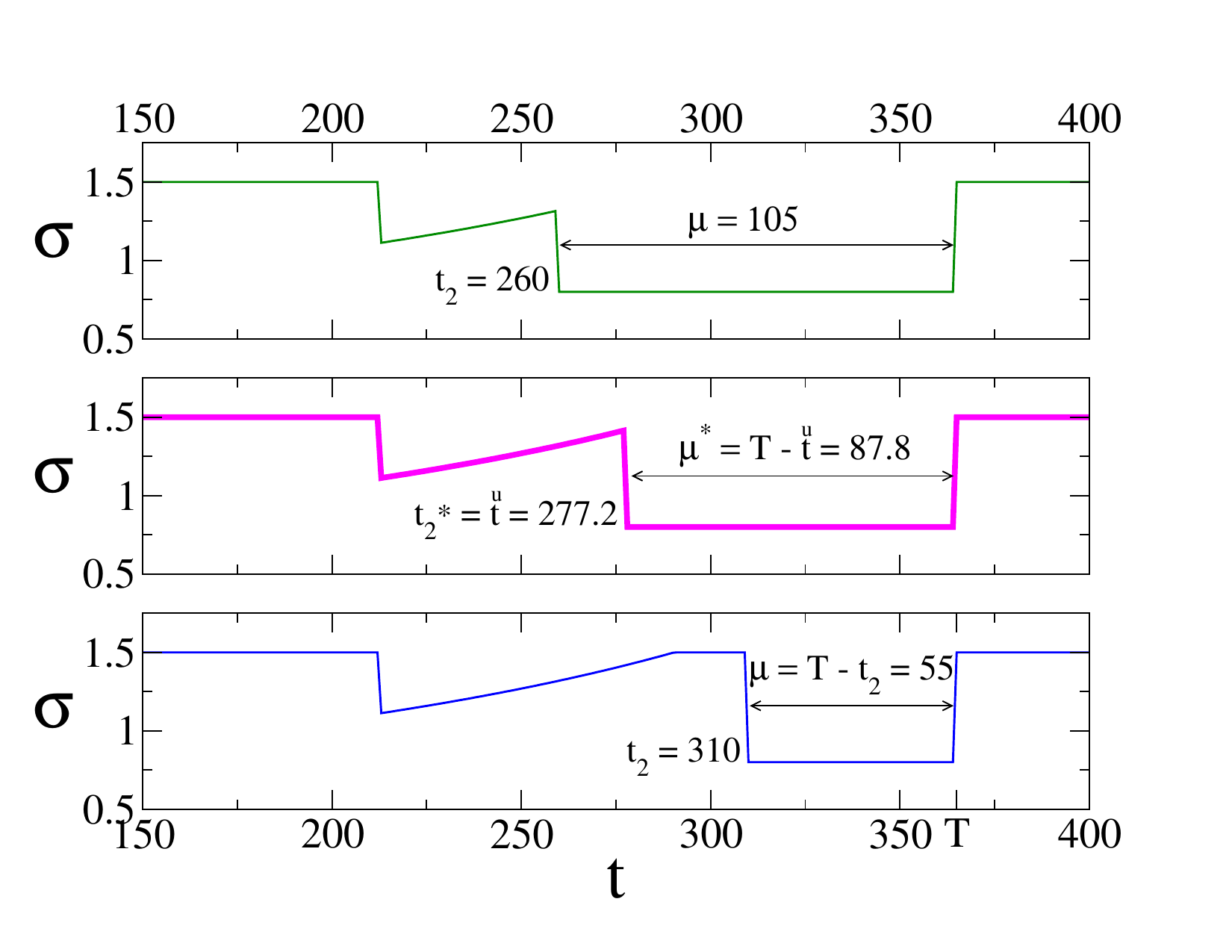}  
    \includegraphics[width=7.5cm]{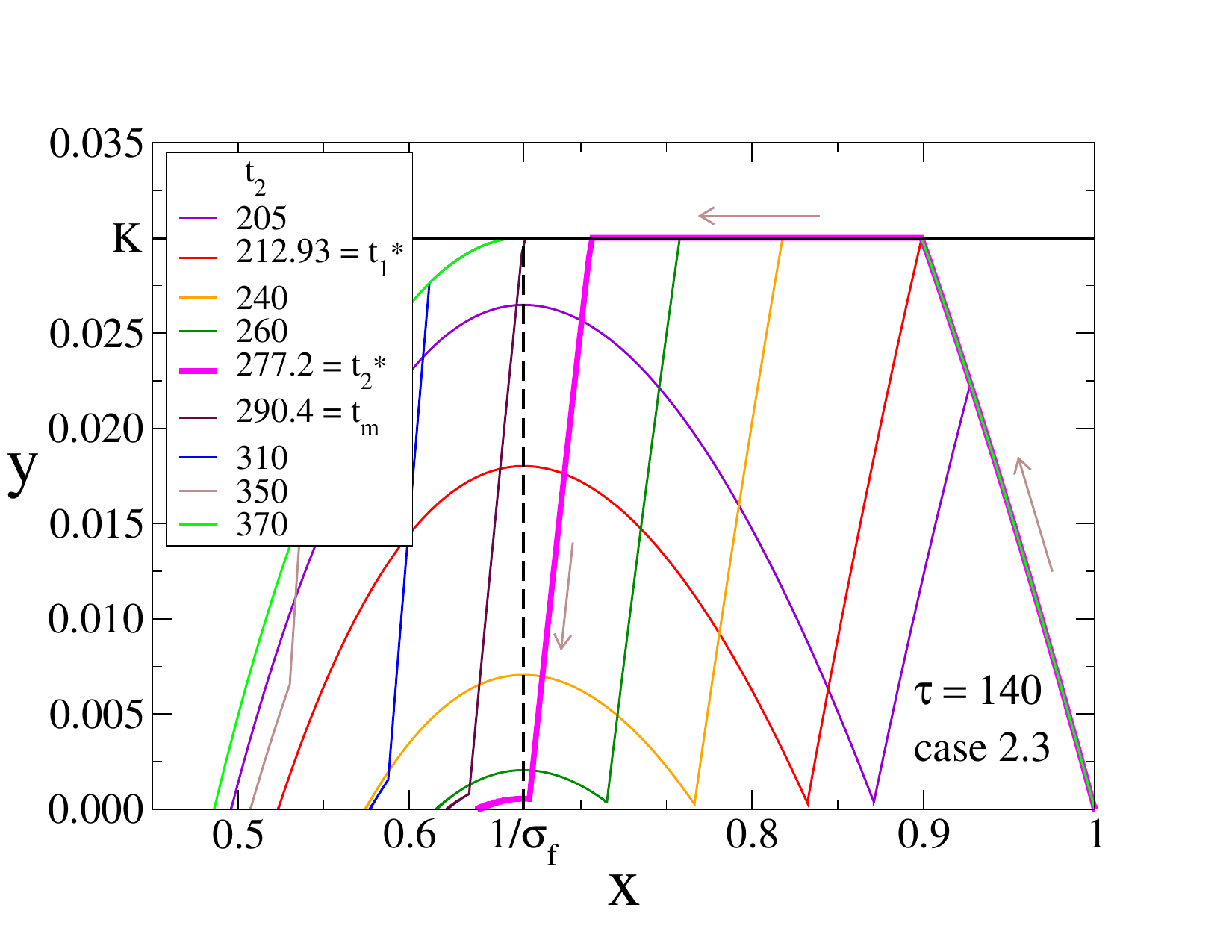} \\   
    \vspace{-0.8cm}
  \end{center}  
  \caption{System's trajectory in the $x$--$y$ phase space (right
    panels) for $K=0.03$, $T=365$, $\gamma=0.1$, $\sigma_s=0.8$ and $\sigma_f=1.5$, and the three values of $\tau$ indicated in the legends, corresponding to three different cases of the optimum time $t_2^*$, according to Theorem~\ref{te: control_optimo_sigma1gral_rest_v2}.  Each individual trajectory corresponds to a different initial time of the strict quarantine $t_2$.  Pink lines denote the optimum trajectory, where the strict quarantine starts at the optimum time $t_2^*$ after the hitting time $t_1^* \simeq 212.93$ for a period $\mu^*$ (minimum of $x_{\infty}$), while for the red and brown trajectories the strict quarantine starts when the system hits ($t_2=t_1^*$) and leaves ($t_2=t_m \simeq 290.4$) the boundary arc $y(t)=K$, respectively.  Left panels show the time evolution of $\sigma(t)$ for three different initial times $t_2$ in each case.  The optimum times are $t_2^* \simeq 286.8$ for $\tau=40$ (top panels), $t_2^* \simeq 277.8$ for $\tau=110$ (middle panels), and $t_2^* \simeq 277.2$ for $\tau=140$ (bottom panels).}
  \label{sigma-x-y-K-003}
\end{figure}

\begin{figure}
  \begin{center}
    \includegraphics[width=9cm]{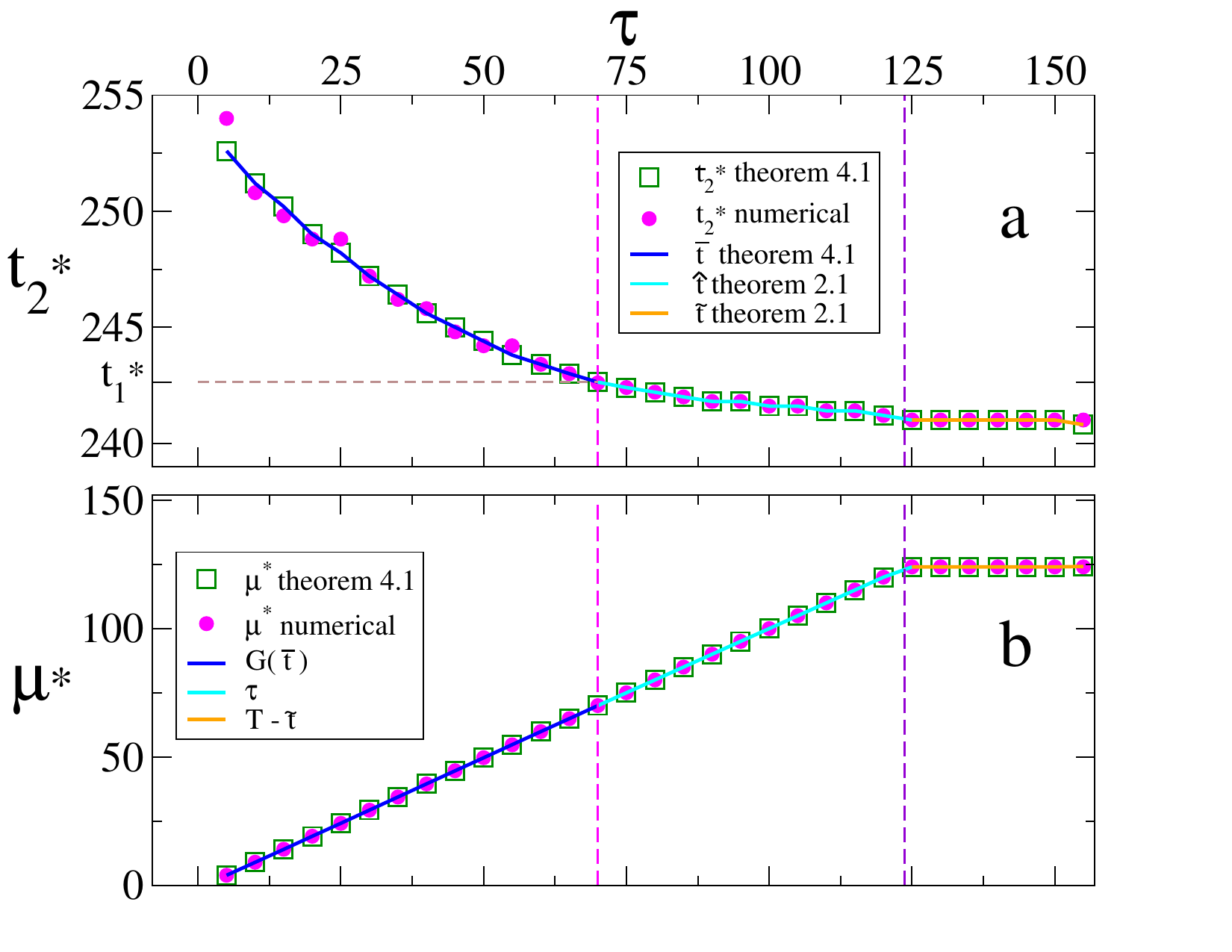} \\
    \vspace{-0.4cm}  
    \includegraphics[width=9cm]{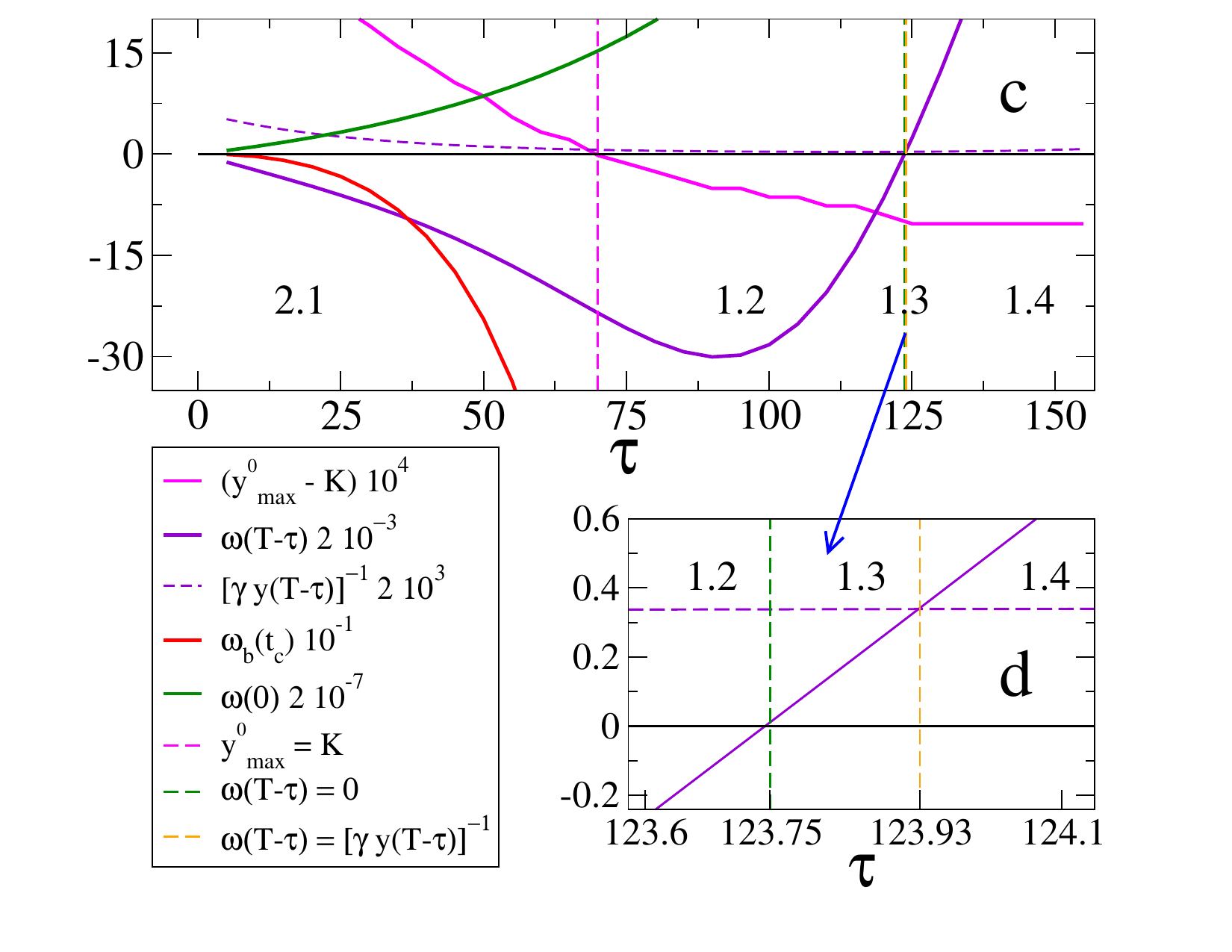}    
  \end{center}  
  \caption{Optimal solution for threshold $K=0.06$, control period $T=365$, recovery rate $\gamma=0.1$, strict quarantine reproduction number $\sigma_s=0.8$, and quarantine free reproduction number $\sigma_f=1.5$.  The initial condition is $x_0=1-10^{-1}$ and $y_0=10^{-6}$.  (a) Optimal initial time of the strict quarantine $t_2^*$ vs maximum allowed duration of the strict quarantine $\tau$.  (b) Optimal duration of the strict quarantine $\mu^*$ vs $\tau$.  In both panels (a) and (b), empty squares represent the exact solution (Theorem~\ref{te: control_optimo_sigma1gral_rest_v2}), solid circles correspond to the numerical solution that maximises $x_{\infty}$, and solid lines are the exact solution in each region.  (c) Graphical determination of the times $\tau \simeq 70$, $\tau \simeq 123.75$ and $\tau \simeq 123.93$ denoted by vertical dashed lines that define the four regions (cases) for the different behaviours of $t_2^*$ and $\mu^*$.  (d) Same as panel (c) with a smaller scale to see region $1.3$ in more detail.}
  \label{w-t2-mu-K-006}
\end{figure}

\begin{figure}
  \begin{center}
    \includegraphics[width=7.5cm]{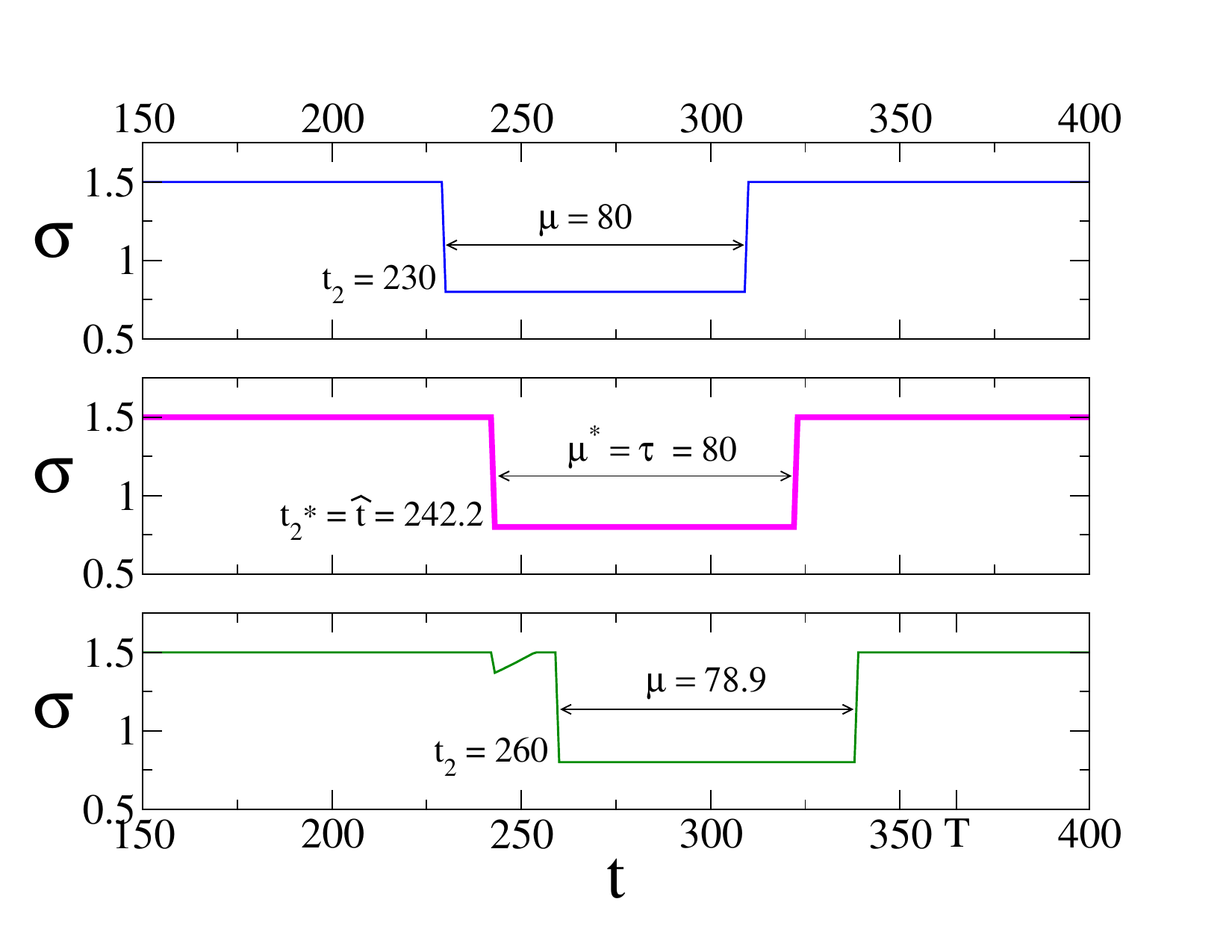} 
    \includegraphics[width=7.5cm]{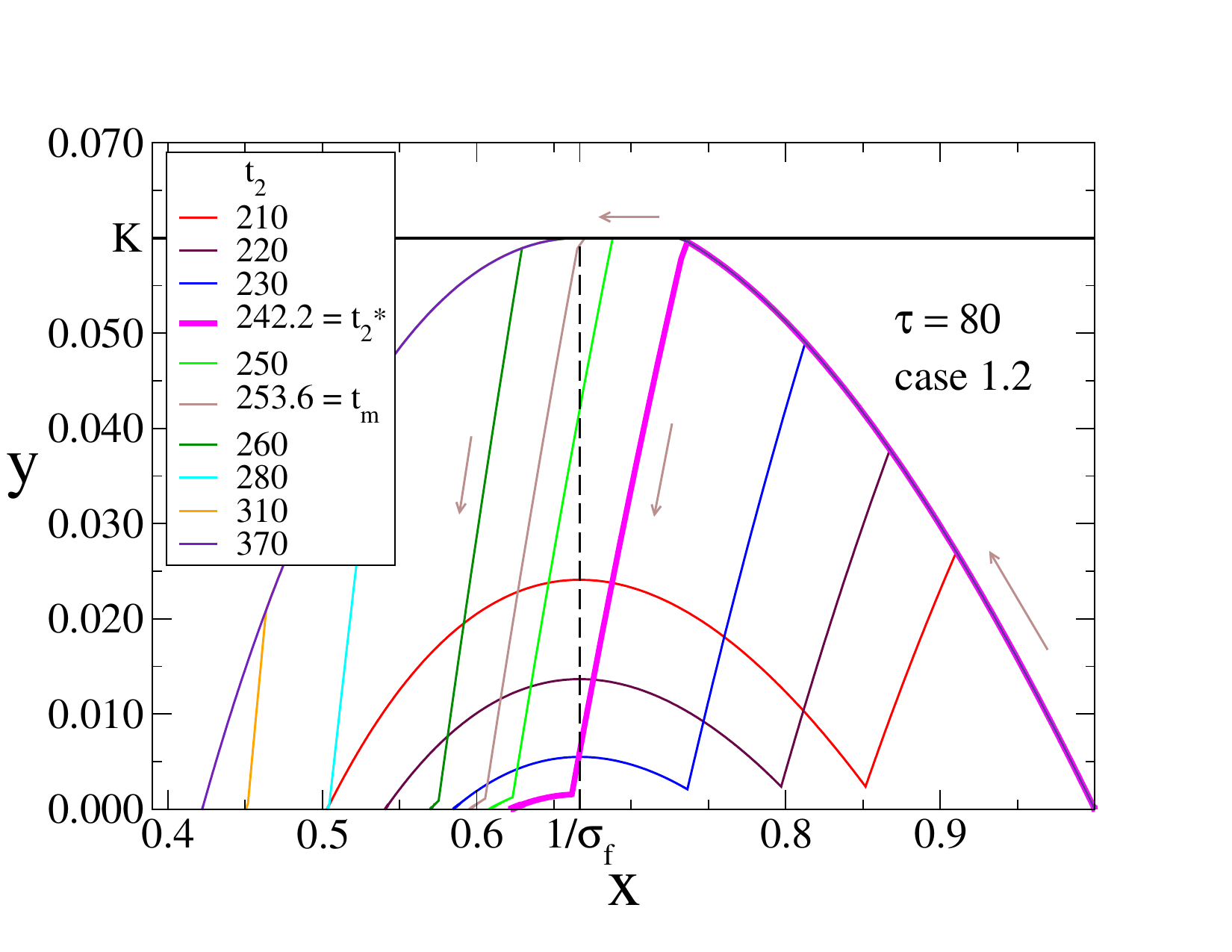} \\
    \vspace{-0.3cm}
    \includegraphics[width=7.5cm]{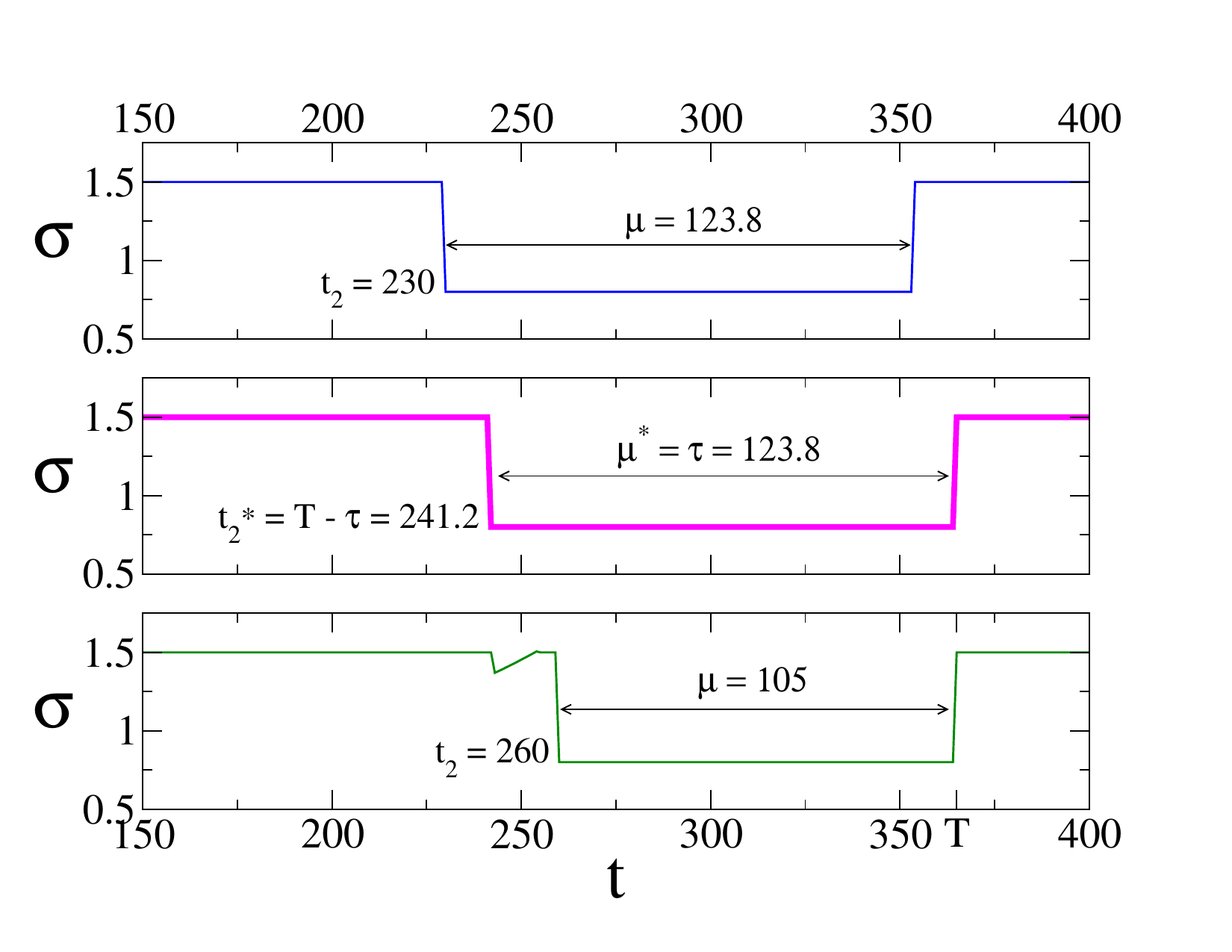} 
    \includegraphics[width=7.5cm]{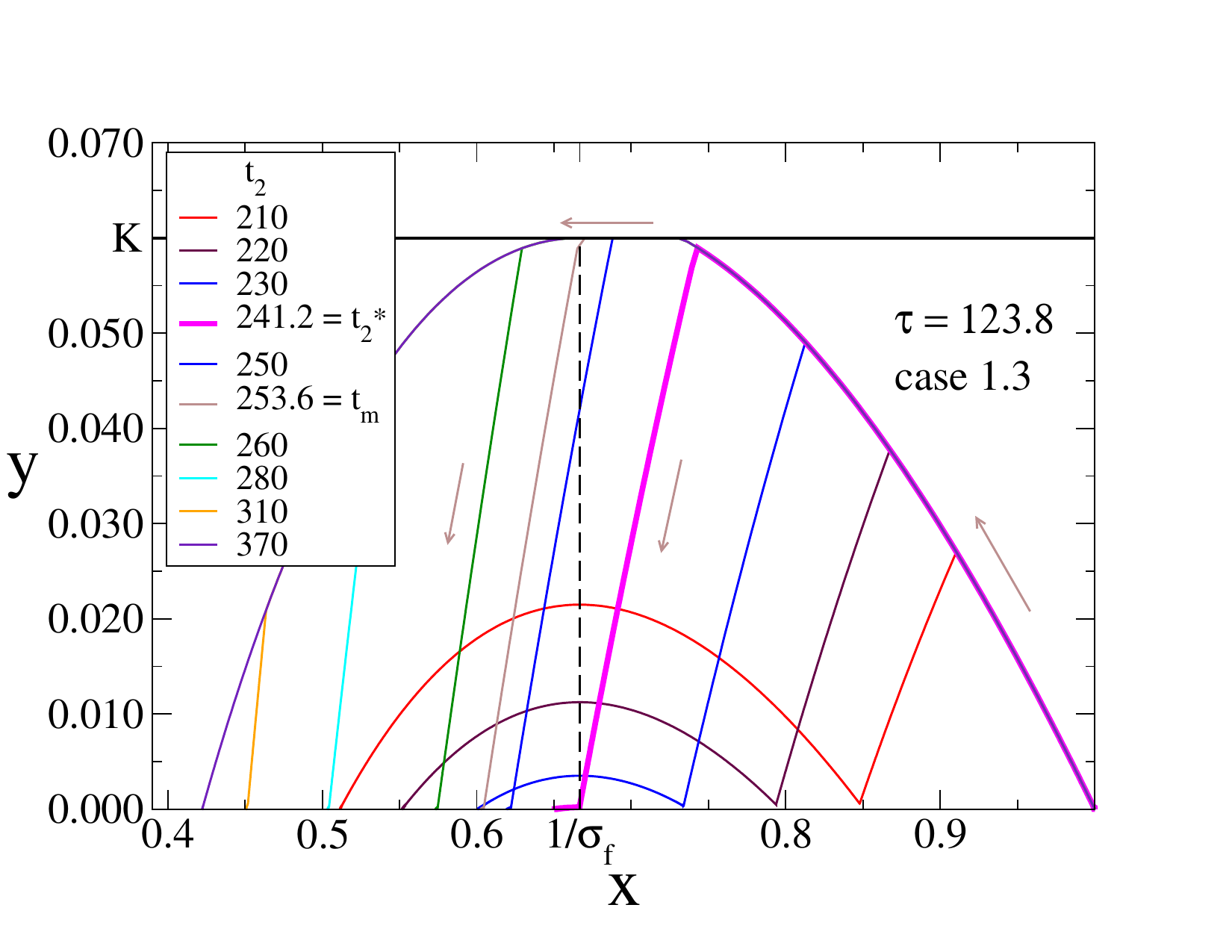} \\
    \vspace{-0.3cm}
    \includegraphics[width=7.5cm]{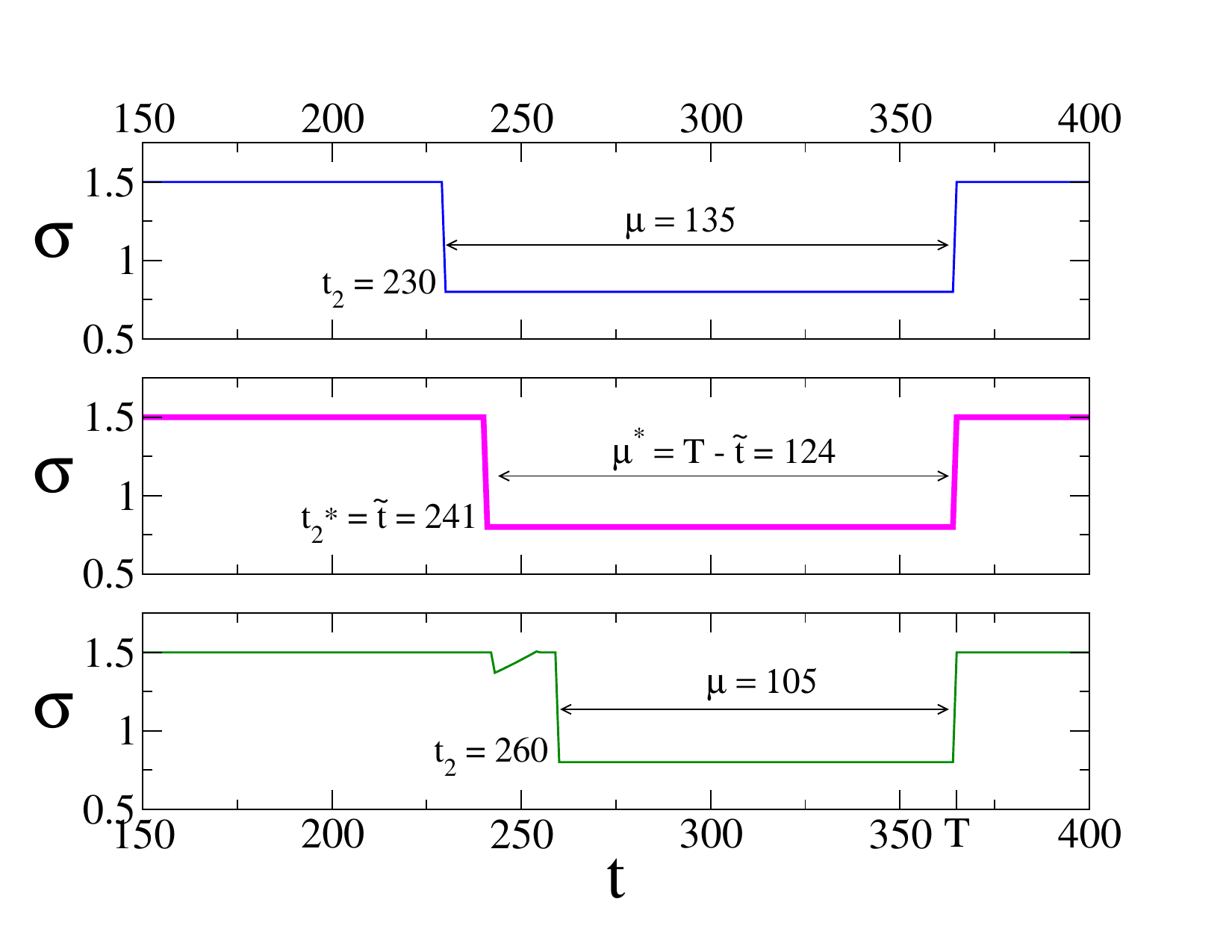} 
    \includegraphics[width=7.5cm]{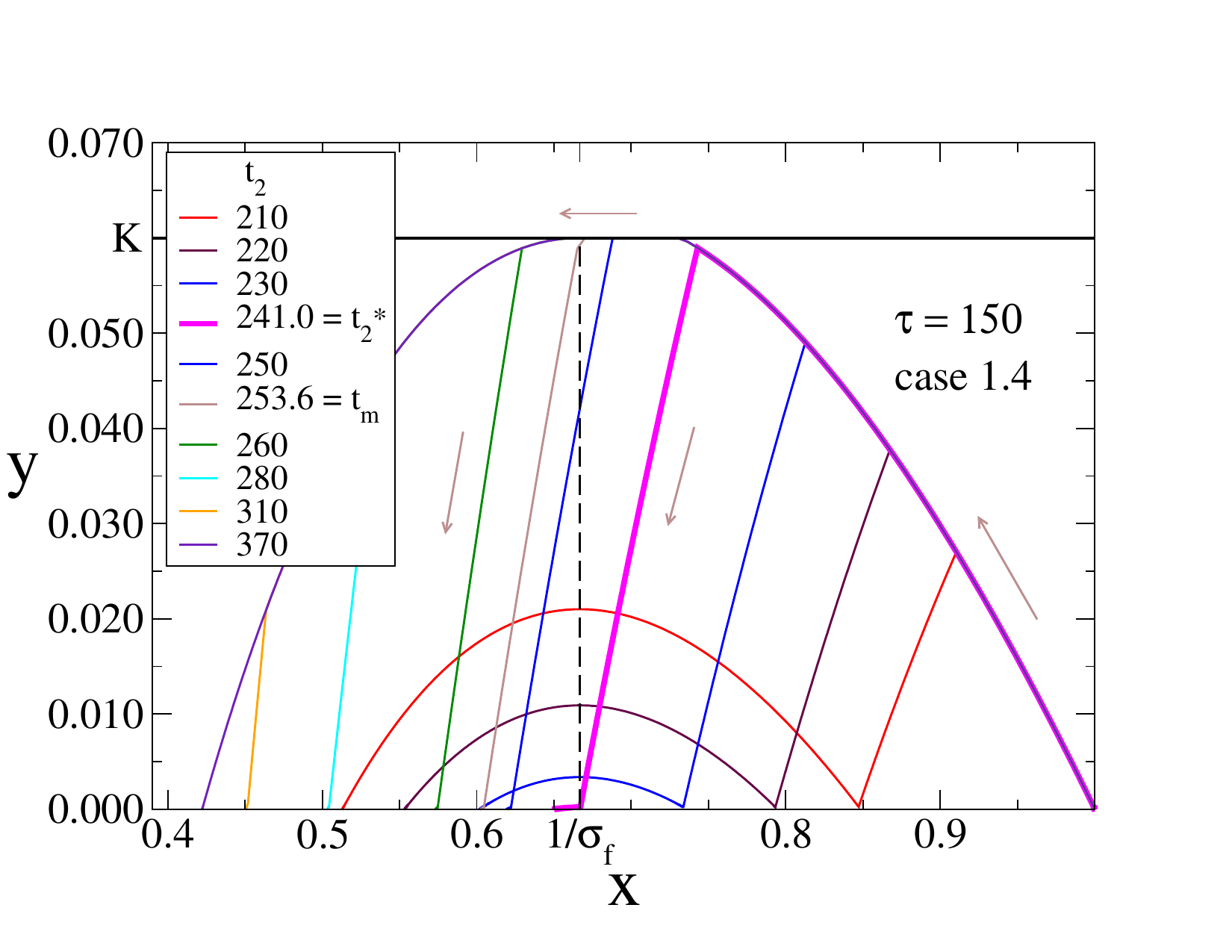} \\
    \vspace{-0.8cm}
  \end{center}    
  \caption{System's trajectory in the $x$--$y$ phase space (right
    panels) for $K=0.06$, $T=365$, $\gamma=0.1$, $\sigma_s=0.8$ and $\sigma_f=1.5$, and the three values of $\tau$ indicated in the legends, corresponding to three different cases of the optimum time $t_2^*$ (Theorem~\ref{te: control_optimo_sigma1gral_rest_v2}).  Each individual trajectory corresponds to a different initial time of the strict quarantine $t_2$.  Pink lines denote the optimum trajectory, where the strict quarantine starts at the optimum time $t_2^*$ before the hitting time $t_1^* \simeq 242.63$ for a period $\mu^*$ (minimum of $x_{\infty}$).  For the brown trajectory the strict quarantine starts when the system leaves the boundary arc $y(t)=K$ at $t_2=t_m \simeq 290.4$ ($x(t_m)=1/\sigma_f$).  Left panels show the time evolution of $\sigma(t)$ for three different initial times $t_2$ in each case.  The optimum times are $t_2^* \simeq 242.2$ for $\tau=80$ (top panels), $t_2^* \simeq 241.2$ for $\tau=123.8$ (middle panels), and $t_2^* \simeq 241.0$ for $\tau=150$ (bottom panels).}
  \label{sigma-x-y-K-006}
\end{figure}


\section{Summary and future work}
\label{se: summary}
In this work, we have analysed optimal quarantine measures using a time-dependent reproduction number within an SIR-type epidemic model. These control measures were considered as isolation measures, ranging from partial lockdown to non-intervention. 

The control strategy was derived based on necessary conditions provided by the Pontryagin Maximum Principle. This strategy is applied over a preset intervention period, aiming to minimise the final size of the epidemic while ensuring that medical care capacity and social and economic costs are not exceeded.

The optimal solution is a "wait, maintain, suspend, release" strategy (see Sereno \cite{Sereno2022} and Miclo \cite{Miclo2020}). It consists of at most four phases: an initial phase of no action with $\sigma^{*}(t)=\sigma_f$ that ends at time $t_1^*$, a second phase where the number of infected individuals is maintained at its maximum possible value $y^{*}(t)=K$ with $\sigma^{*}(t)=1/x^{*}(t)$ that ends at $t_2^*$, followed by a third phase of partial lockdown with $\sigma^{*}(t)=\sigma_s$ applied over a single interval that ends at $t_2^*+\mu^*$, and concluding with a final phase of no intervention where $\sigma^{*}(t)=\sigma_f$ until the control time $T$. Subsequently, a specific optimal policy is deduced by optimising the transition times $t_1^{*}$, $t_2^{*}$, and $t_2^{*}+\mu^{*}$ of this generic strategy.

As demonstrated in Theorem \ref{te: control_optimo_sigma1gral_rest_v2} and tested with numerical simulations, the optimal time $t_1^{*}$ to begin the maintain strategy depends on the initial values $(x_0,y_0)$ and the threshold $K$. The optimal time $t_2^{*}$ to finish the maintain strategy and begin the suspend phase also depends on the value $\tau$, which restricts the duration of both the maintain and suspend strategies.

Future work will focus on the consideration of more complex and realistic models. These models could combine different cost functionals from the literature (e.g., final size, $L^1$ norm for the control, peak prevalence) with more general restrictions on the control and final state.

\section{Declaration of Competing Interest}
The authors declares that they have no competing financial interests or personal relationships that could have appeared to influence the work reported in this paper.

\section{Supplement}

We begin by giving some results concerning the analysis of the function $F[t_1^*](t_2)$ defined in \eqref{eq: Ft1t2} in section \ref{se: caracterization_optimal}. 

\begin{lemma} \label{le: F_crece}
The function $F[t_1^*](t_2)$ defined in \eqref{eq: Ft1t2} is strictly decreasing. Moreover for $(\mu,t_2) \in {\mathcal R}[t_1^*]$ defined in \eqref{eq: regionR} it holds
\begin{align}
\label{eq: cota_mu_F}
\mu \le F[t_1^*](t_2) \le F[t_1^*](t_1^*) = \tau \, \text{ for all } \, t_2 \in [t_1^*,t_{f}].
\end{align}
\end{lemma}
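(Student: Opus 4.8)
The plan is to keep $t_1^{*}$ fixed throughout, so that $x(t_1^{*})=x[t_1^{*},t_2,\mu](t_1^{*})$ is a fixed positive constant (the control equals $\sigma_f$ on $[0,t_1^{*}]$, independently of $t_2,\mu$); writing $a:=x(t_1^{*})$, I would differentiate the explicit formula \eqref{eq: Ft1t2} with respect to $t_2$. An elementary computation gives
\[
\frac{d}{dt_2}F[t_1^{*}](t_2)=\frac{\sigma_f}{\sigma_s-\sigma_f}-\frac{1}{(\sigma_s-\sigma_f)\bigl(a-\gamma K (t_2-t_1^{*})\bigr)}=\frac{1}{\sigma_s-\sigma_f}\left(\sigma_f-\frac{1}{a-\gamma K (t_2-t_1^{*})}\right).
\]
The next step is to identify $a-\gamma K (t_2-t_1^{*})$ with $x[t_1^{*},t_2,\mu](t_2)$, the susceptible fraction at the exit of the boundary arc: on a boundary arc $x'(t)=-\gamma K$ by \eqref{eq: SIR_iso_x}, hence $x[t_1^{*},t_2,\mu](t)=x(t_1^{*})-\gamma K(t-t_1^{*})$, and evaluation at $t=t_2$ gives the claim. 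By Remark \ref{re: x2_y_sigmaf} (equivalently, by the definition of $t_m$ in \eqref{eq: tm}), for $t_2$ in the admissible range $[t_1^{*},t_m]$ one has $x[t_1^{*},t_2,\mu](t_2)\ge 1/\sigma_f>0$, with equality only at $t_2=t_m$; hence the bracket $\sigma_f-\frac{1}{a-\gamma K (t_2-t_1^{*})}$ is $\ge 0$ on $[t_1^{*},t_m]$, and strictly positive for $t_2<t_m$. Since $0\le\sigma_s<\sigma_f$, the prefactor $\tfrac{1}{\sigma_s-\sigma_f}$ is negative, so $\frac{d}{dt_2}F[t_1^{*}](t_2)\le 0$ on $[t_1^{*},t_m]$ and is strictly negative except possibly at $t_2=t_m$. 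This proves $F[t_1^{*}]$ is strictly decreasing on $[t_1^{*},t_m]$, in particular on $[t_1^{*},t_f]\subseteq[t_1^{*},t_m]$ (recall $t_f\le t_m$ by \eqref{eq: tf}); note also that on this range the logarithm's argument is positive, so $F[t_1^{*}]$ is well defined there.

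For the chain of inequalities \eqref{eq: cota_mu_F}, I would argue as follows. If $(t_2,\mu)\in{\mathcal R}[t_1^{*}]$, then by the characterisation \eqref{eq: regionR} and the definition of $G$ in \eqref{eq: Gpartida} we have $\mu\le G(t_2)=\min\{F[t_1^{*}](t_2),T-t_2\}\le F[t_1^{*}](t_2)$, which is the first inequality. The middle inequality $F[t_1^{*}](t_2)\le F[t_1^{*}](t_1^{*})$ is immediate from the monotonicity just established together with $t_2\ge t_1^{*}$. Finally, putting $t_2=t_1^{*}$ in \eqref{eq: Ft1t2} kills the second term ($t_2-t_1^{*}=0$) and the logarithm ($\ln 1=0$), leaving $F[t_1^{*}](t_1^{*})=\tau$.

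The argument is essentially routine; the only point requiring attention is the algebraic identification $a-\gamma K(t_2-t_1^{*})=x[t_1^{*},t_2,\mu](t_2)$ together with the inequality $x[t_1^{*},t_2,\mu](t_2)\ge 1/\sigma_f$, which is precisely what confines the analysis to $[t_1^{*},t_m]$ and is already contained in Remark \ref{re: x2_y_sigmaf} and the definition \eqref{eq: tm}. I do not anticipate any genuine obstacle.
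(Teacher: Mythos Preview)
Your proof is correct and follows essentially the same approach as the paper: you differentiate the explicit formula for $F[t_1^{*}]$, identify the denominator with $x[t_1^{*},t_2,\mu](t_2)$, and invoke Remark~\ref{re: x2_y_sigmaf} together with $\sigma_s<\sigma_f$ to get the sign of the derivative, then read off \eqref{eq: cota_mu_F} from monotonicity and direct evaluation at $t_2=t_1^{*}$. Your treatment is slightly more careful about the possible equality at $t_2=t_m$, but otherwise the arguments coincide.
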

\begin{proof}
We compute the derivative
\begin{align} \label{eq: derivada_Ft1}
\frac{d F[t_1^*]}{d t_2}(t_2)&= \frac{1}{\sigma_s-\sigma_f} \Big( \sigma_f  -  \frac{1}{x[t_1^*,t_2,\mu](t_2)}  \Big).
\end{align}
Using that $\sigma_s-\sigma_f <0$ and $ \sigma_f  -   \frac{1}{x[t_1^*,t_2,\mu](t_2)}  > 0$ (see Remark \ref{re: x2_y_sigmaf}), 
we obtain that $F$ is strictly decreasing. Therefore we obtain \eqref{eq: cota_mu_F}.
\end{proof}

\begin{lemma}\label{le: H_crece}
 Let $H:[t_{1}^{*},t_{f}] \to \mathbb{R}$ be the function defined by $H(t_2)=F[t_1^*](t_2)+t_2$. Then $H$ is an increasing function.
\end{lemma}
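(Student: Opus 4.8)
The plan is to prove Lemma~\ref{le: H_crece} by a direct differentiation, showing that $H'(t_2)>0$ at every interior point of $[t_1^*,t_f]$, which yields strict monotonicity and hence the claimed monotonicity. First I would reuse the derivative of $F[t_1^*]$ already computed in the proof of Lemma~\ref{le: F_crece}, namely equation \eqref{eq: derivada_Ft1}: writing $x_2:=x[t_1^*,t_2,\mu](t_2)=x(t_1^*)-\gamma K(t_2-t_1^*)$ (which depends on $t_2$ only, not on $\mu$), one has $\frac{dF[t_1^*]}{dt_2}(t_2)=\frac{1}{\sigma_s-\sigma_f}\left(\sigma_f-\frac{1}{x_2}\right)$. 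Since $H=F[t_1^*]+t_2$, this gives, after putting everything over the common denominator $\sigma_s-\sigma_f$,
\[
H'(t_2)=1+\frac{dF[t_1^*]}{dt_2}(t_2)=\frac{(\sigma_s-\sigma_f)+\sigma_f-1/x_2}{\sigma_s-\sigma_f}=\frac{\sigma_s-1/x_2}{\sigma_s-\sigma_f}.
\]

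It then remains only to determine the sign of this ratio. The denominator $\sigma_s-\sigma_f$ is negative by hypothesis. For the numerator, I would use that the susceptible fraction $x$ is strictly decreasing along the trajectory associated with $\sigma[t_1^*,t_2,\mu]$ — from \eqref{eq: SIR_iso_x} on the free phases and $x'=-\gamma K<0$ on the boundary arc — together with $x_0<1$, which holds because $(x_0,y_0)\in{\cal D}$ has $x_0+y_0\le 1$ and $y_0>0$; hence $x_2\le x_0<1$. Combined with the standing assumption $\sigma_s<1$, this gives $\sigma_s x_2<1$, equivalently $\sigma_s-\frac{1}{x_2}=\frac{\sigma_s x_2-1}{x_2}<0$. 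Thus $H'(t_2)$ is the quotient of two negative quantities, hence strictly positive, so $H$ is (strictly) increasing on $[t_1^*,t_f]$.

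There is essentially no real obstacle here: the statement collapses to a one-line computation once \eqref{eq: derivada_Ft1} is at hand. The only points that need a little care are that the differentiation is unambiguous because $x_2$ is a function of $t_2$ alone and not of $\mu$, and the sign bookkeeping, where it is important to use the strict inequalities $x_2<1$ and $\sigma_s<1$ rather than merely the non-strict admissibility bound $x_2\le 1/\sigma_s$ coming from $\sigma_b=1/x_2\ge\sigma_s$ on the boundary arc — otherwise one would obtain only $H'\ge 0$.
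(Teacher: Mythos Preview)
Your proof is correct and follows essentially the same route as the paper: differentiate $H$ using \eqref{eq: derivada_Ft1} to get $H'(t_2)=\dfrac{1}{\sigma_s-\sigma_f}\Big(\sigma_s-\dfrac{1}{x_2}\Big)$, then observe both numerator and denominator are negative. Your added justification for the sign of the numerator via $x_2<1$ and $\sigma_s<1$ is a bit more explicit than the paper's, but the argument is the same.
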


\begin{proof}
Computing the derivative of $H(t_2)$ we obtain
 \begin{align} \label{eq: deriv_H}
 H'(t_2)&= 1+\frac{d F[t_1^*]}{d t_2}(t_2)
= \frac{1}{\sigma_s-\sigma_f}\left(\sigma_s- \frac{1}{x[t_1^*,t_2,\mu](t_2)}\right) >0.
\end{align}
\end{proof}

Next we will do some preliminary calculations in order to compute the partial derivative $\frac{\partial J}{\partial \mu}(t_2,\mu)$ given by \eqref{eq: deriv_J_resp_mu_cuerpo} and also the derivative of $\tilde{J}(t_2)$ given by \eqref{eq: Jtilde_primat2} and its monotonicity.

We denote the solution of system
\begin{subequations} \label{eq: sistema_xy}
\begin{align}
&x'(r)= -\gamma \sigma(r) x(r) y(r),   \quad  x(t)=x \\
 &y'(r) = \gamma \sigma(r) x(r) y(r) - \gamma y(r),  \quad y(t)=y
 \end{align}
 \end{subequations}
  for $r\ge t$ by 
\begin{align} \label{eq: psi} \Psi(r,t,x,y,\sigma), \,\, & \text{ with } \sigma\equiv \sigma_s, \sigma\equiv \sigma_f \, \text{ or }\, \sigma(r)=\sigma_b(r)=1/x(r) \nonumber \\  &\text{ and initial data } \,\,    (x,y)\in {\cal D} \,\, \text{ at time } \, t.  \end{align}
If we denote  $(x_1,y_1)=\Psi(t^*_1,t_0,x_0,y_0,\sigma_f) \label{eq: x1_y1}$, $(x_2,y_2)=\Psi(t_2,t^*_1,x_1,y_1,\sigma_b)$, $(x_3,y_3)=\Psi(t_2+\mu,t_2,x_2,y_2,\sigma_s)$, then, from \eqref{eq: sigma opt gral_kappa0}
\begin{equation} \label{eq: xy_t_eta}
(x[{t_1^*,t_2,\mu}](r),y[{t_1^*,t_2,\mu}](r))= \left\{ \begin{array}{ll}
\Psi(r,t_0,x_0,y_0,\sigma_f)& \text{for } 0\le r \le t^*_1 \\
\Psi(r,t^*_1,x_1,y_1,\sigma_b) & \text{ for } t^*_1 < r \le t_2\\
\Psi(r,t_2,x_2,y_2,\sigma_s)& \text{ for } t_2< r \le t_2+\mu \\
\Psi(r,t_2+\mu,x_3,y_3,\sigma_f)& \text{ for } t_2+\mu< r \le T
\end{array}\right. .
\end{equation}

While the majority of the subsequent content is covered in the Supplement of \cite{Balderrama22}, we will incorporate certain proofs essential to ensure the self-sufficiency of this article.

We recall two properties for the solutions of ordinary differential equations. First, the relation
between the derivative with respect to initial time and the derivatives with respect to initial
data give us the equation
\begin{align} \label{eq: deriv_sol_resp_tiempo_inicial}
\frac{\partial \Psi_j(s,t,x,y,\sigma)}{\partial t}&=\frac{\partial \Psi_j(s,t,x,y,\sigma)}{\partial x} \gamma \sigma x y - \frac{\partial \Psi_j(s,t,x,y,\sigma)}{\partial y} \gamma y(\sigma x -1)
\end{align}
for $\Psi$ defined in \eqref{eq: psi}, with $s\ge t$, $\sigma \in \left\{ \sigma_s,\sigma_f\right\}$, initial data $(x,y)\in {\cal D}$ at time $t$ and $j=1,2$.

Second, the dependence of the solution $\Psi(s,t,x,y,\sigma)$ with respect to initial data
$x,y$ is given by the following known equations. For simplicity of notation, when there is no
risk of confusion, we will denote $\Psi(s)$ for $\Psi(s,t,x,y,\sigma)$,

\begin{equation}
\left( \begin{array}{ll}
\frac{\partial \Psi_1}{\partial x} & \frac{\partial \Psi_1}{\partial y} \\
\frac{\partial \Psi_2}{\partial x}&\frac{\partial \Psi_2}{\partial y}
\end{array}\right)' (s)=
\left( \begin{array}{ll}
-\gamma \sigma \Psi_2 (s) & -\gamma \sigma \Psi_1(s)\\
\gamma \sigma \Psi_2(s)&\gamma ( \sigma \Psi_1(s)-1)
\end{array}\right) .
\left( \begin{array}{ll}
\frac{\partial \Psi_1}{\partial x} & \frac{\partial \Psi_1}{\partial y} \\
\frac{\partial \Psi_2}{\partial x}&\frac{\partial \Psi_2}{\partial y}
\end{array}\right)(s) .
\end{equation}
with initial data
\begin{equation}
\left( \begin{array}{ll}
\frac{\partial \Psi_1}{\partial x} & \frac{\partial \Psi_1}{\partial y} \\
\frac{\partial \Psi_2}{\partial x}&\frac{\partial \Psi_2}{\partial y}
\end{array}\right)(t)=Id .
\end{equation}
Then, denoting $t_3=t_2+\mu$, $t_4=T$, $\sigma_2=\sigma_s$ and $\sigma_3=\sigma_f$, for $i=2,3$, we define
\begin{subequations}  \label{eq: def_u_v}
\begin{align}
u_i(s)=u(s,t_i,x_i,y_i,\sigma_i)=\frac{\partial \Psi_1}{\partial x_i}(s,t_i,x_i,y_i,\sigma_i) - \frac{\partial \Psi_1}{\partial y_i}(s,t_i,x_i,y_i,\sigma_i),  \label{eq: def_u} \\
v_i(s)=v(s,t_i,x_i,y_i,\sigma_i)=\frac{\partial \Psi_2}{\partial x_i}(s,t_i,x_i,y_i,\sigma_i) - \frac{\partial \Psi_2}{\partial y_i}(s,t_i,x_i,y_i,\sigma_i),  \label{eq: def_v}
\end{align}
\end{subequations}
for $s\in [t_i,t_{i+1}]$,
that satisfy the system of equations on $u_i$ and $v_i$
\begin{equation} \label{eq: derivparc_u_v}
\left( \begin{array}{l}
u_i'(s) \\
v_i'(s)
\end{array}\right)=
\left( \begin{array}{ll}
-\gamma \sigma_i \Psi_2(s) & -\gamma \sigma_i \Psi_1(s)\\
\gamma \sigma_i \Psi_2(s)&\gamma ( \sigma_i \Psi_1(s)-1)
\end{array}\right) .
\left( \begin{array}{l}
u_i(s) \\
v_i(s)
\end{array}\right) .
\end{equation}
with initial data
\begin{equation}
\left( \begin{array}{ll}
u_i(t_i)\\
v_i(t_i)
\end{array}\right)=
\left( \begin{array}{ll}
1\\
-1
\end{array}\right).
\end{equation}

Moreover, using that for any $(x_i,y_i)\in {\cal D}$, $\Psi(s,t_i,x_i,y_i,\sigma_i)$, satisfies for $s\in [t_i,t_{i+1}]$
\begin{align*}
\Psi_1(s,t_i,x_i,y_i,\sigma_i) e^{-\sigma_i (\Psi_1(s,t_i,x_i,y_i,\sigma_i)+\Psi_2(s,t_i,x_i,y_i,\sigma_i))} =x_i e^{-\sigma_i (x_i+y_i)},
\end{align*}
we compute the derivatives with respect to $x_i$ and $y_i$  for $s\in [t_i,t_{i+1}]$
\begin{subequations} \label{eq: deriv_Psi_12}
 \begin{align}
\frac{\partial \Psi_1}{\partial x_i}(s)-\sigma_i \Psi_1(s) \left( \frac{\partial \Psi_1}{\partial x_i}(s)+\frac{\partial \Psi_2}{\partial x_i}(s)\right)  &=(1-\sigma_i x_i) \frac{\Psi_1(s)}{x_i}, \label{eq: deriv_Psi_1}  \\
\frac{\partial \Psi_1}{\partial y_i}(s)-\sigma_i \Psi_1(s) \left( \frac{\partial \Psi_1}{\partial y_i}(s)+\frac{\partial \Psi_2}{\partial y_i}(s)\right) &=-\sigma_i \Psi_1(s). \label{eq: deriv_Psi_2}
\end{align}
\end{subequations}
where, for simplicity of notation, we have called $\Psi_j(s)=\Psi_j(s,t_i,x_i,y_i,\sigma_i)$ for $j=1,2$ and $s\in [t_i,t_{i+1}]$.
Then, subtracting the last two equations
\begin{align} \label{eq: expres_uts}
 u_i(s) -\sigma_i  \Psi_1(s,t_i,x_i,y_i,\sigma_i) (  u_i(s)+ v_i(s))=\frac{ \Psi_1(s,t_i,x_i,y_i,\sigma_i)}{x_i}
\end{align}
for $s\in[t_i,t_{i+1}]$ and therefore using \eqref{eq: expres_uts}, from \eqref{eq:
derivparc_u_v} we have that $u_i$ satisfies the ordinary differential equation
{\small \begin{align*}
 u_i'(s)&=\gamma \left(\sigma_i \Psi_1(s,t_i,x_i,y_i,\sigma_i)-\sigma_i \Psi_2(s,t_i,x_i,y_i,\sigma_i) -1\right) u_i(s)+\gamma  \frac{\Psi_1(s,t_i,x_i,y_i,\sigma_i)}{x_i}, \\
u_i(t_i)&= 1.
\end{align*}}

Thus,  for $s\in[t_i,t_{i+1}]$ when $i=2,3$, we obtain
\begin{align}
u_i(s)&=\frac{\Psi_1(s)\Psi_2(s)}{x_i y_i}+\gamma \frac{\Psi_1(s)\Psi_2(s)}{x_i} \int_{t_i}^{s} \frac{1}{\Psi_2(r)}dr \nonumber\\
& = \frac{\Psi_1(s)}{x_i }+\gamma \sigma_i \frac{\Psi_1(s)\Psi_2(s)}{x_i} \int_{t_i}^{s} \frac{\Psi_1(r)}{\Psi_2(r)}dr,  \label{eq: u_ti} \\
v_i(s)&=-\frac{\Psi_1(s)}{x_i}+\frac{(1-\sigma_i \Psi_1(s))\Psi_2(s)}{x_i}\gamma  \int_{t_i}^{s} \frac{\Psi_1(r)}{\Psi_2(r)}dr, \label{eq: v_ti} \\
u_i(s)+v_i(s)&=\frac{\Psi_2(s)}{x_i}\gamma  \int_{t_i}^{s} \frac{\Psi_1(r)}{\Psi_2(r)}dr. \label{eq: umasv_ti}
\end{align}


From  \eqref{eq: xy_t_eta}, using \eqref{eq: deriv_sol_resp_tiempo_inicial} and \eqref{eq: def_u_v}, after some computations we have, for $r\in (t_2+\mu,T]$
\begin{subequations} \label{eq: deriv_x_y_mu}
\begin{align}
\frac{\partial x}{\partial \mu} [t_1^*,t_2,\mu](r) 
&= \gamma  x_3y_3 (\sigma_f - \sigma_s) u_3(r) \\
\frac{\partial y}{\partial \mu} [t_1^*,t_2,\mu](r) &= 
\gamma  x_3y_3 (\sigma_f - \sigma_s) v_3(r)
\end{align}
\end{subequations}
and for $r\in [0,t_2+\mu]$
\begin{align} \label{eq: deriv_x_mu_0_t2masmu}
 \frac{\partial x[{t_1^*,t_2,\mu}]}{\partial \mu}(r)=0, \,\, \frac{\partial y[{t_1^*,t_2,\mu}]}{\partial \mu}(r)=0.
\end{align}

Therefore, combining \eqref{eq: deriv_xinfty} with \eqref{eq: deriv_x_y_mu} for $r=T$ and \eqref{eq: expres_uts} for $i=3$ we obtain
\begin{align}  \label{eq: deriv_J_resp_mu}
\frac{\partial J}{\partial \mu}(t_2, \mu) &=\gamma  y_3 (\sigma_f - \sigma_s)
\frac{x_{\infty}[t_1^*,t_2,\mu]}{1- \sigma x_{\infty}[t_1^*,t_2,\mu]}
\end{align}

where $x_{\infty}[t_1^*,t_2,\mu]=x_{\infty}(x[t_1^*,t_2,\mu)](T),y[t_1^*,t_2,\mu](T),\sigma_f)$.

In what follows we will compute $\tilde{J}'(t_2)$. We begin by computing 
 the derivative of $x[t_1^*,t_2,\mu](r)$ with respect to $t_2$. 
From  \eqref{eq: xy_t_eta}, we have that for $r\in[0,t_2)$, 
\begin{align*}
\frac{\partial x}{\partial t_2}[t_1^*,t_2, \mu](r) = 0, \,\, \frac{\partial x}{\partial t_2}[t_1^*,t_2, \mu](r) = 0.
\end{align*}

For $r\in (t_2,t_2+\mu)$, using \eqref{eq: deriv_sol_resp_tiempo_inicial}, \eqref{eq: def_u_v} and the fact that $x_2=x[t_1^*,t_2,\mu](t_1^*)-\gamma K (t_2-t_1)$ and $y_2=K$, we obtain
\begin{subequations} \label{eq: deriv_x_y_t2_t2_t2masmu}
\begin{align}
\frac{\partial x}{\partial t_2} [t_1^*,t_2,\mu](r)
&= \gamma   K \left( \sigma_s x_2  -1\right) u_2(r) \label{eq: deriv_x_t2_t2_t2masmu} \\
\frac{\partial y}{\partial t_2} [t_1^*,t_2,\mu](r) 
&= \gamma  K \left( \sigma_s x_2  -1\right) v_2(r) \label{eq: deriv_y_t2_t2_t2masmu}
\end{align}
\end{subequations}

Finally, for $r\in (t_2+\mu,T]$, from  \eqref{eq: xy_t_eta}, using \eqref{eq: sistema_xy} and \eqref{eq: psi} with $\sigma=\sigma_s$ and initial data $(x_2,y_2)$ at time $t_2$, 
\eqref{eq: deriv_sol_resp_tiempo_inicial} and \eqref{eq: def_u_v} we obtain
\begin{subequations} \label{eq: deriv_x3_y3}
\begin{align}
\frac{\partial x_3}{\partial t_2}   \label{eq: deriv_x3}
&= -\gamma \sigma_s x_3 y_3 + \gamma K \left( \sigma_s x_2  -1\right) u_2(t_2+\mu)\\
\frac{\partial y_3}{\partial t_2}   \label{eq: deriv_y3}
&=\gamma y_3 (\sigma_s x_3 -1)+ \gamma K \left( \sigma_s x_2  -1\right) v_2(t_2+\mu).
\end{align}
\end{subequations}

Thus, 
{\small \begin{align*}
\frac{\partial x}{\partial t_2} [t_1^*,t_2,\mu](r) &= \gamma  x_3y_3 (\sigma_f - \sigma_s) u_3(r)+\gamma K\left(\sigma_s x_2-1\right) \cdot \\ 
& \left[ \frac{\partial \Psi_1}{\partial x_3}(r, t_2+\mu, x_3,y_3,\sigma_f) u_2(t_2+\mu)+\frac{\partial \Psi_1}{\partial y_3}(r, t_2+\mu, x_3,y_3,\sigma_f)v_2(t_2+\mu)  \right]
\end{align*}}
and
{\small \begin{align*}
\frac{\partial y}{\partial t_2} [t_1^*,t_2,\mu](r) &= 
 \gamma  x_3y_3 (\sigma_f - \sigma_s) v_3(r)+\gamma K\left(\sigma_s x_2-1\right) \\
 & \left[ \frac{\partial \Psi_2}{\partial x_3}(r, t_2+\mu, x_3,y_3,\sigma_f) u_2(t_2+\mu)+\frac{\partial \Psi_2}{\partial y_3}(r, t_2+\mu, x_3,y_3,\sigma_f)v_2(t_2+\mu)  \right].
\end{align*}}
After some computations, combining \eqref{eq: deriv_xinfty} with \eqref{eq: deriv_Psi_12} and \eqref{eq: expres_uts} for $s=T$ and for $s=t_3$ we obtain

{\small \begin{align*} 
\frac{\partial J}{\partial t_2}(t_2, \mu)
&=  \frac{x_{\infty}[t_1^*,t_2,\mu]}{(1-\sigma_f x_{\infty}[t_1^*,t_2,\mu])} \cdot  \nonumber\\
& \hspace{0.6cm} \left[\gamma (\sigma_f-\sigma_s) y_3 + \gamma K (\sigma_sx_2-1)
\left( \frac{1}{x_2}- (\sigma_f-\sigma_s)(u_2(t_2+\mu)+v_2(t_2+\mu))\right)\right] .
\end{align*}}
From \eqref{eq: umasv_ti} and using that integrating $\left( \frac{1}{y[t_1^*,t_2,\mu]}\right)' (r)$ on the interval $[t_2,t_3]$  (see equations (74) and (75) in \cite{Balderrama22} for $i=2$ and $s=t_3$) we obtain 
\begin{align} \label{eq: cuenta_aux}
1+\gamma y_3 (\sigma_s-\sigma_f) \int_{t_2}^{t_2+\mu} \frac{x[t_1^*,t_2,\mu](r)}{y[t_1^*,t_2,\mu](r)} dr = \frac{y_3}{K}-\gamma y_3 \int_{t_2}^{t_2+\mu} \frac{\sigma_f x[t_1^*,t_2,\mu](r)-1}{y[t_1^*,t_2,\mu](r)} dr,
\end{align}
 we conclude that
{\small\begin{align} \label{eq: deriv_J_resp_t2}
\frac{\partial J}{\partial t_2}(t_2, \mu)
&=   \frac{\gamma y_3 x_{\infty}[t_1^*,t_2,\mu]}{(1-\sigma_f x_{\infty}[t_1^*,t_2,\mu])}  \left[ \sigma_f-\frac{1}{x_2}-\gamma K \left(\sigma_s-\frac{1}{x_2}\right)  \int_{t_2}^{t_2+\mu} \frac{\sigma_f x[t_1^*,t_2,\mu](r)-1}{y[t_1^*,t_2,\mu](r)} dr\right].	
\end{align}}

Therefore, 
 from \eqref{eq: deriv_J_resp_t2}, \eqref{eq: deriv_J_resp_mu} and \eqref{eq: derivada_Ft1}  we obtain for 
 $t_2 \in [t_1^*,t_{c})$
{\small  \begin{align} \label{eq: deriv_J_resp_t2_parte1}
\frac{\partial}{\partial t_2}J(t_2,F[t_1^*](t_2)) &=\frac{\gamma^2 y_3 K (1-\sigma_s x_2) x_{\infty}[t_1^*,t_2,F[t_1^*](t_2)]}{x_2 (1-\sigma_f x_{\infty}[t_1^*,t_2,F[t_1^*](t_2)])} \int_{t_2}^{t_2+F[t_1^*](t_2)}\frac{\sigma_f x[t_1^*,t_2,F[t_1^*](t_2)]](r)-1}{y[t_1^*,t_2,F[t_1^*](t_2)]](r)}dr,
 \end{align}}
and for $t_2 \in (t_{c},t_f]$
{\small \begin{align}  \label{eq: deriv_J_resp_t2_parte2}
\frac{\partial}{\partial t_2}J(t_2,T-t_2) &= \frac{\gamma^2 y_3 K(1-\sigma_s x_2) x_{\infty}[t_1^*,t_2,T-t_2] }{x_2 (1-\sigma_f x_\infty[t_1^*,t_2,T-t_2])}\left( \int_{t_2}^{T} \frac{\sigma_f x[t_1^*,t_2,T-t_2](r)-1}{y[t_1^*,t_2,T-t_2](r)} dr -\frac{1}{\gamma K} \right).
 \end{align}}

\begin{lemma} \label{le: varphi_decrece}
The function $\varphi:[t_1^*,t_{c}] \to \mathbb{R}$
$$
\varphi\left(t_{2}\right)=\Psi_{1} (t_{2}+F[t_1^{*}] (t_{2}), t_{2}, x_{2}, y_{2}, \sigma_{s})
$$
is decreasing.
\end{lemma}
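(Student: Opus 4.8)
The plan is to differentiate $\varphi$ explicitly and check that the derivative is negative. Writing $\mu=F[t_1^*](t_2)$ and $X(t_2,\mu):=x[t_1^*,t_2,\mu](t_2+\mu)=x_3$ for the value of the susceptible fraction at the instant the strict quarantine ends, we have $\varphi(t_2)=X\big(t_2,F[t_1^*](t_2)\big)$, and, by the smooth dependence of ODE solutions on data and parameters, the chain rule gives
\[
\varphi'(t_2)=\frac{\partial x_3}{\partial t_2}(t_2,\mu)+\frac{\partial x_3}{\partial \mu}(t_2,\mu)\,\frac{dF[t_1^*]}{dt_2}(t_2).
\]
Of the three factors on the right, $\partial x_3/\partial t_2$ (at $\mu$ fixed) is already recorded in \eqref{eq: deriv_x3} as $-\gamma\sigma_s x_3y_3+\gamma K(\sigma_s x_2-1)\,u_2(t_2+\mu)$, while $dF[t_1^*]/dt_2$ equals $(\sigma_s-\sigma_f)^{-1}\big(\sigma_f-1/x_2\big)$ by \eqref{eq: derivada_Ft1}. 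For the remaining factor I would observe that on $(t_2,t_2+\mu)$ the control equals $\sigma_s$ and, by \eqref{eq: deriv_x_mu_0_t2masmu}, $x[t_1^*,t_2,\mu](r)$ does not depend on $\mu$ for $r\le t_2+\mu$; hence $x_3=\Psi_1(t_2+\mu,t_2,x_2,y_2,\sigma_s)$ and, differentiating in the upper time argument via \eqref{eq: SIR_iso_x}, one gets $\partial x_3/\partial\mu=-\gamma\sigma_s x_3y_3$.

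Substituting these three expressions and collecting the two copies of $-\gamma\sigma_s x_3y_3$,
\[
\varphi'(t_2)=-\gamma\sigma_s x_3y_3\left(1+\frac{\sigma_f-1/x_2}{\sigma_s-\sigma_f}\right)+\gamma K(\sigma_s x_2-1)\,u_2(t_2+\mu)=-\gamma\sigma_s x_3y_3\,\frac{\sigma_s-1/x_2}{\sigma_s-\sigma_f}+\gamma K(\sigma_s x_2-1)\,u_2(t_2+\mu).
\]
It then remains to read off the signs. Since the region ${\cal D}$ is forward-invariant and $y_2=K>0$, we have $x_2\le 1-K<1$, and together with $\sigma_s<1$ this yields $\sigma_s x_2<1$, i.e. $\sigma_s x_2-1<0$ and $\sigma_s-1/x_2<0$; also $\sigma_s-\sigma_f<0$, $\gamma K>0$, $x_3,y_3>0$, and $u_2>0$ by \eqref{eq: u_ti}. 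Consequently the first summand is $\le 0$ (strictly negative when $\sigma_s>0$, and zero when $\sigma_s=0$) and the second summand is $<0$; hence $\varphi'(t_2)<0$ on $[t_1^*,t_c]$ and $\varphi$ is strictly decreasing, which gives the lemma.

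The only genuinely delicate point is the computation of $\partial x_3/\partial\mu$: one must not use the sensitivity formula \eqref{eq: deriv_x_y_mu}, which is derived holding the evaluation time $r>t_2+\mu$ fixed, but instead account for the terminal instant $t_2+\mu$ moving with $\mu$, which is precisely why the derivative reduces to evaluating the vector field $x'=-\gamma\sigma_s xy$ at $(x_3,y_3)$. The rest is routine bookkeeping with the identities already set up earlier in the Supplement; in particular the positivity of $u_2$ invoked above is exactly the positivity made explicit by the formula \eqref{eq: u_ti}.
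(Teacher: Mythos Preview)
Your proof is correct and follows essentially the same route as the paper: both differentiate $\varphi$ explicitly and check the sign of $\varphi'$. The only cosmetic difference is organizational --- the paper combines the two pieces $-\gamma\sigma_s x_3 y_3$ coming from the moving upper endpoint into a single term multiplied by $H'(t_2)=1+F'[t_1^*](t_2)$ (equation \eqref{eq: deriv_H}), whereas you keep the $\partial_{t_2}$ and $\partial_\mu$ contributions separate and then collect them; the resulting expressions are algebraically equivalent, and your careful remark about why $\partial x_3/\partial\mu$ must be computed with the moving terminal time (rather than via \eqref{eq: deriv_x_y_mu}) is exactly the point.
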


\begin{proof}
Using similar computations to \eqref{eq: deriv_x3} combined with \eqref{eq: deriv_H} we obtain
\begin{align*}
 \varphi^{\prime}\left(t_{2}\right) 
  & =(\sigma_{s} x_2-  1 )  \left(  \frac{\gamma \sigma_{s} x_{3} y_{3}}{x_2} 
 +\gamma  K  u_{2} (t_{2}+F[t_{1}^{*}] (t_{2})) \right),
 \end{align*}
 concluding the desired result from the fact that $\sigma_s x_2<1$ and $u_2(t_2+F[t_{1}^{*}] (t_{2}))>0$ (see \eqref{eq: u_ti}).
\end{proof}

\bibliographystyle{plain}

\bibliography{references_sir_hospi}

\end{document}